\documentclass[a4paper]{scrartcl}

\usepackage[utf8]{luainputenc}
\usepackage[USenglish]{babel}
\usepackage{csquotes}

\usepackage[a4paper, top=2.5cm, bottom=2.5cm, left=2.5cm, right=2.5cm]{geometry}
\usepackage[plainpages=false,pdfpagelabels,hidelinks,unicode]{hyperref}

\usepackage[%
  backend=biber,
  style=numeric-comp,
  giveninits=true, uniquename=init, 
  natbib=true,
  url=true,
  doi=true,
  isbn=false,
  backref=false,
  maxnames=99,
  ]{biblatex}
\addbibresource{references.bib}

\usepackage{amsmath}
\numberwithin{equation}{section}
\allowdisplaybreaks
\usepackage{amssymb}
\usepackage{commath}
\usepackage{mathtools}
\usepackage{bbm}
\usepackage{nicefrac}

\usepackage{amsthm}
\theoremstyle{plain}
  \newtheorem{theorem}{Theorem}
  \newtheorem{lemma}[theorem]{Lemma}
  \newtheorem{corollary}[theorem]{Corollary}
  \newtheorem{proposition}[theorem]{Proposition}
\theoremstyle{definition}
  
  \newtheorem{remark}[theorem]{Remark}
  
\numberwithin{theorem}{section}

\usepackage{color}
\usepackage{graphicx}
\usepackage[small]{caption}
\usepackage{subcaption}

\begingroup\expandafter\expandafter\expandafter\endgroup
\expandafter\ifx\csname pdfsuppresswarningpagegroup\endcsname\relax
\else
  \pdfsuppresswarningpagegroup=1\relax
\fi

\usepackage{booktabs}
\usepackage{rotating}
\usepackage{multirow}


\usepackage{ifluatex}
\ifluatex
  \usepackage[no-math]{fontspec}
\else
  \usepackage[T1]{fontenc}
\fi
\usepackage{newpxtext,newpxmath}


\newcommand{\R}{\mathbb{R}}
\renewcommand{\H}{\mathcal{H}}
\renewcommand{\O}{\mathcal{O}}
\newcommand{\dt}{{\Delta t}}
\newcommand{\1}{\mathbbm{1}}
\newcommand{\e}{\mathrm{e}}
\newcommand{\scp}[2]{\left\langle{#1,\, #2}\right\rangle}

\newenvironment{keywords}{\par\textbf{Key words.}}{\par}
\newenvironment{AMS}{\par\textbf{AMS subject classification.}}{\par}

\usepackage{gensymb}
\usepackage{tikz}
\usetikzlibrary{matrix,backgrounds,shapes}
\usetikzlibrary{positioning}
\usetikzlibrary{fit}
\usetikzlibrary{plotmarks}
\usetikzlibrary{decorations.pathreplacing}
\usepackage{pgfplots}
\usetikzlibrary{shapes}
\usetikzlibrary{positioning,arrows}
\usetikzlibrary{fadings,shapes.arrows,shadows}
\usetikzlibrary{arrows.meta}
\usepackage{bm}
\usepackage{xspace}
\usepackage{afterpage}
\usepackage{color}
\usepackage{float}
\definecolor{darkorange}{rgb}{1.0, 0.55, 0.0}
\definecolor{royalblue}{rgb}{0.254,0.41,0.88}
\usepackage{xcolor}



\newcommand{\Tr}{\ensuremath{^{\mr{\top}}}}

\newcommand{\mr}[1]{\ensuremath{\mathrm{#1}}}
\newcommand{\fnc}[1]{\ensuremath{\mathcal{#1}}}

\newcommand{\mat}[1]{\ensuremath{\mathsf{#1}}}

\newcommand{\xm}[0]{\ensuremath{x_{m}}}
\newcommand{\xone}[0]{x_{1}}
\newcommand{\xtwo}[0]{x_{2}}
\newcommand{\xthree}[0]{x_{3}}




\newcommand{\M}[0]{\ensuremath{\overline{\mat{P}}}}
\newcommand{\Mk}[0]{\ensuremath{\mat{P}}_{j}}



\newcommand{\Q}[0]{\ensuremath{\bm{\fnc{Q}}}}
\newcommand{\W}[0]{\ensuremath{\bm{\fnc{W}}}}
\newcommand{\Fxm}[0]{\ensuremath{\bm{\fnc{F}}_{\xm}^{(I)}}}
\newcommand{\Fxmv}[0]{\ensuremath{\bm{\fnc{F}}_{\xm}^{(V)}}}
\newcommand{\GB}[0]{\ensuremath{\bm{\fnc{G}}^{(B)}}}
\newcommand{\Gzero}[0]{\ensuremath{\bm{\fnc{G}}^{(0)}}}
\newcommand{\Uone}[0]{\ensuremath{\fnc{U}_{1}}}
\newcommand{\Utwo}[0]{\ensuremath{\fnc{U}_{2}}}
\newcommand{\Uthree}[0]{\ensuremath{\fnc{U}_{3}}}

\newcommand{\Cij}[2]{\ensuremath{\mat{C}_{#1,#2}}}

\newcommand{\qk}[0]{\ensuremath{\bm{q}_{j}}}

\newcommand{\matJk}[0]{\ensuremath{\mat{J}}_{j}}

\newcommand{\onek}[0]{\ensuremath{\bm{1}}}











\newcommand{\nxm}[0]{\ensuremath{n_{\xm}}}

\newcommand{\Pmatvol}[0]{\ensuremath{\mat{P}}}



\title{Relaxation Runge--Kutta Methods: Fully-Discrete Explicit Entropy-Stable
       Schemes for the Compressible Euler and Navier--Stokes Equations}
\author{Hendrik Ranocha \and
        Mohammed Sayyari \and
        Lisandro Dalcin \and
        Matteo Parsani \and
        David I. Ketcheson}
\date{October 23, 2019}

\makeatletter
\hypersetup{pdftitle={\@title}}
\makeatother

\begin{document}

\maketitle

\begin{abstract}
  The framework of inner product norm preserving relaxation Runge--Kutta methods
  (David I. Ketcheson, \emph{Relaxation {R}unge--{K}utta Methods: Conservation and
  Stability for Inner-Product Norms}, SIAM Journal on Numerical Analysis, 2019)
  is extended to general convex quantities.  Conservation, dissipation, or other
  solution properties with respect to any convex functional are enforced by
  the addition of a {\em relaxation parameter} that multiplies the Runge--Kutta
  update at each step.
  Moreover, other desirable stability (such as strong stability preservation)
  and efficiency (such as low storage requirements) properties are preserved.
  The technique can be applied to both explicit and implicit Runge--Kutta methods
  and requires only a small modification to existing implementations.
  The computational cost at each step is the solution of one additional scalar
  algebraic equation for which a good initial guess is available.
  The effectiveness of this approach is proved analytically and demonstrated in
  several numerical examples, including applications to high-order
  entropy-conservative and entropy-stable semi-discretizations on unstructured
  grids for the compressible Euler and Navier--Stokes equations.
\end{abstract}

\begin{keywords}
  Runge--Kutta methods,
  energy stability,
  entropy stability,
  monotonicity,
  strong stability,
  invariant conservation,
  conservation laws,
  fully-discrete entropy stability,
  compressible Euler and Navier--Stokes equations
\end{keywords}

\begin{AMS}
  65L20, 
  65L06, 
  65M12, 
  76N99  
\end{AMS}

\sloppy

\section{Introduction}

Consider a time-dependent ordinary differential equation (ODE)
\begin{equation}
\label{eq:ode}
\begin{aligned}
  \od{}{t} u(t) &= f(t, u(t)),
  && t \in (0,T),
  \\
  u(0) &= u^0,
\end{aligned}
\end{equation}
in a real Hilbert space $\H$ with inner product $\scp{\cdot}{\cdot}$, inducing
the norm $\norm{\cdot}$.  Let $\eta\colon \H \to \R$ denote a smooth convex function
whose correct evolution in time is important in the solution of \eqref{eq:ode}.
In relevant applications $\eta$ might represent e.g. some form of energy or momentum;
in the present work we refer to $\eta$ as \emph{entropy}, with a view to applications
in hyperbolic and incompletely parabolic system of partial differential equations (PDEs)
such as the compressible Euler and Navier--Stokes equations.
The time evolution of $\eta$ is given by
$\od{}{t} \eta(u(t)) = \scp{\eta'(u(t))}{f(t, u(t))}$.
Thus entropy \emph{dissipative} systems satisfy
\begin{equation}
\label{eq:dissipative}
  \forall u \in \H, t \in [0,T]\colon
  \quad
  \scp{\eta'(u)}{f(t, u)} \leq 0,
\end{equation}
while entropy \emph{conservative} ones fulfill
\begin{equation}
\label{eq:conservative}
  \forall u \in \H, t \in [0,T]\colon
  \quad
  \scp{\eta'(u)}{f(t, u)} = 0.
\end{equation}
In many applications it is important to preserve this qualitative behavior;
i.e. to ensure that
$$
    \eta(u^{n+1}) \le \eta(u^n)
$$
for a dissipative problem or that
$$
    \eta(u^{n+1}) = \eta(u^0)
$$
for a conservative problem.  Violation of these properties can lead to solutions
that are unphysical and qualitatively incorrect.  Nevertheless, most numerical
methods fail to guarantee these discrete properties.  In the present work, we
present a modification that makes any Runge--Kutta (RK) method preserve conservation
or dissipativity while also retaining other important and desirable properties
of the unmodified Runge--Kutta method.

In this work we focus on applications to entropy conservative or entropy dissipative
semi-discretizations of hyperbolic conservation laws \cite{tadmor1987numerical,tadmor2003entropy}
and the Navier--Stokes equations (incompletely parabolic).
Nevertheless, the methods presented here may be useful in many other applications,
including Hamiltonian systems, dispersive wave equations, and other areas where
geometric numerical integration is important.

\begin{remark}
  It is possible to generalize this setting to Banach spaces instead of Hilbert
  spaces. In that case, scalar products of the form $\scp{\eta'}{f}$ should be
  read as the application of the bounded linear functional $\eta'$ to $f$.
\end{remark}

\subsection{Related Work}

Recently, there has been some interest in nonlinear and entropy stability of numerical methods
for balance laws.
Several major hurdles remain on the path towards complete nonlinear and entropy stability of numerical algorithms
because
most of the research has been focused on semi-discrete
schemes (see, for instance, \cite{svard_entropy_stable_euler_wall_2014,parsani_entropy_stability_solid_wall_2015,carpenter_entropy_stability_ssdc_2016,Winters2016,ranocha2017shallow,Wintermeyer2017,fernandez_pref_euler_entropy_stability_2019,fernandez_pref_ns_entropy_stability_2019}).
Stability/dissipation results for fully discrete schemes have mainly been limited
to semi-discretizations including certain amounts of dissipation
\cite{tadmor2003entropy, higueras2005monotonicity, zakerzadeh2015high,
ranocha2018stability}, linear equations \cite{tadmor2002semidiscrete,
ranocha2018L2stability, sun2017stability, sun2018strong}, or fully implicit
time integration schemes \cite{tadmor2003entropy, friedrich2018entropy, lefloch2002fully}. For
explicit methods and general equations, there are negative experimental and
theoretical results concerning entropy stability \cite{ranocha2018strong,
lozano2018entropy}.

While applications to entropy conservative/dissipative schemes for hyperbolic
and parabolic
balance laws are included in this article, the general technique is not
limited to this setting but can be applied to many ordinary differential
equations, and to both explicit and implicit Runge--Kutta methods.
Since the basic idea is to preserve properties given at the continuous level
discretely, these schemes are related to the topic of geometric numerical
integration, see \cite{hairer2006geometric} and references therein.

The basic idea behind the methods proposed here comes from Dekker \& Verwer
\cite[pp. 265-266]{dekker1984stability} and has been developed for inner-product
norms in \cite{ketcheson2019relaxation}.
The idea (and notation) of Dekker \& Verwer \cite{dekker1984stability} was applied
in \cite{delbuono2002explicit} to a restricted class of fourth order methods.
This was extended in \cite{calvo2006preservation} by giving a general
proof that applying the technique to a Runge--Kutta method of order $p$ results in a method
of order at least $p-1$.  The idea was referred to therein as the \emph{incremental
direction technique} (IDT), and viewed as a Runge--Kutta projection method where
the search direction is the same as the direction of the next time step update.
Nevertheless, the main focus in \cite{calvo2006preservation}
is on a different type of projection in which the search direction is chosen based
on an embedded method.
Grimm \& Quispel \cite{grimm2005geometric} extended the standard orthogonal projection
method \cite[Section~IV.4]{hairer2006geometric} to dissipative systems possessing
a Lyapunov function and the same approach was used in \cite{calvo2010projection,
laburta2015numerical} with the choice of search directions advocated in
\cite{calvo2006preservation}. Kojima \cite{kojima2016invariants} reviewed some
related methods and proposed another kind of projection scheme for conservative
systems.

Like standard Runge--Kutta methods, and in contrast to orthogonal
projection methods, the schemes
based on the approach of Dekker \& Verwer \cite{dekker1984stability} or
Calvo et al. \cite{calvo2006preservation} preserve linear invariants --
a feature that is absolutely essential in the numerical solution of hyperbolic
conservation laws.
It is also interesting to study different projection methods since the behavior of
these schemes can depend crucially on the choice of conserved quantities
\cite[Section~IV.4]{hairer2006geometric} and the type of projection or search
direction \cite{calvo2010projection, calvo2015runge, kojima2016invariants,
ranocha2016time}.

The goal of this article is to extend the theory developed in
\cite{ketcheson2019relaxation} to a much broader class of problems. The resulting
schemes are shown to possess desirable properties, both theoretically and
in numerical experiments. In particular, applications include fully-discrete
entropy stable numerical methods of any order for the three-dimensional compressible Euler and
Navier--Stokes equations on unstructured grids based on summation-by-parts (SBP)
operators \cite{carpenter_ssdc_2014,Parsani2016}.
Analytical and numerical comparisons with other types of
projection schemes are left for future work.

\subsection{Runge--Kutta Methods}

A general (explicit or implicit) Runge--Kutta method with $s$ stages can be represented
by its Butcher tableau \cite{butcher2008numerical, hairer2008solving}
\begin{equation}
\label{eq:butcher}
\begin{array}{c | c}
  c & A
  \\ \hline
    & b^T
\end{array}\, ,
\end{equation}
where $A \in \R^{s \times s}$ and $b, c \in \R^s$. For \eqref{eq:ode}, a step
from $u^n \approx u(t_n)$ to $u^{n+1} \approx u(t_{n+1})$ where $t_{n+1} = t_n + \dt$
is given by
\begin{subequations}
\label{eq:RK-step}
\begin{align}
\label{eq:RK-stages}
  y_i
  &=
  u^n + \dt \sum_{j=1}^{s} a_{ij} \, f(t_n + c_j \dt, y_j),
  \qquad i \in \set{1, \dots, s},
  \\
\label{eq:RK-final}
  u^{n+1}
  &=
  u^n + \dt \sum_{i=1}^{s} b_{i} \, f(t_n + c_i \dt, y_i).
\end{align}
\end{subequations}
Here, $y_i$ are the stage values of the Runge--Kutta method.
We will make use of the shorthand
\begin{align}
  f_i & := f(t_n + c_i \dt, y_i), &
  f_0 & := f(t_n, u^n).
\end{align}
As is common in the literature, we assume that $A \1 = c$ with $\1 = (1, \dots, 1)^T \in \R^s$.

A Runge--Kutta method is (entropy) \emph{dissipation preserving} if
$\eta(u^{n+1}) \leq \eta(u^n)$
whenever the right hand side fulfills \eqref{eq:dissipative}. Similarly, it is
(entropy) \emph{conservative} if
$\eta(u^{n+1}) = \eta(u^n)$
whenever the system satisfies \eqref{eq:conservative}. Depending on the context,
such schemes are also called \emph{monotone} or \emph{strongly stable}
\cite{higueras2005monotonicity, ranocha2018strong}.

\section{Relaxation Runge--Kutta Methods}
\label{sec:RRK}

Following \cite[pp.~265--266]{dekker1984stability} and \cite{ketcheson2019relaxation},
the basic idea to make a given Runge--Kutta method entropy stable is to scale the
weights $b_i$ by a parameter $\gamma_n \in \R$, i.e. to use
\begin{equation}
\label{eq:RK-final-gamma}
  u^{n+1}_\gamma
  :=
  u^n + \gamma_n \dt \sum_{i=1}^{s} b_{i} f_i
\end{equation}
instead of $u^{n+1}$ in \eqref{eq:RK-final} as the new value after one time step.
If the entropy is just the energy $\eta(u) = \frac{1}{2} \norm{u}^2$, the choice
of $\gamma_n$ proposed in \cite{ketcheson2019relaxation} is such that
\begin{equation}
  \frac{1}{2} \norm{u^{n+1}_\gamma}^2 - \frac{1}{2} \norm{u^n}^2
  =
  \gamma_n \dt \sum_{i=1}^s b_i \scp{y_i}{f_i}.
\end{equation}
The new generalization to entropy stability proposed in this article is
to enforce the condition
\begin{equation}
  \eta(u^{n+1}_\gamma) - \eta(u^n)
  =
  \gamma_n \dt \sum_{i=1}^s b_i \scp{\eta'(y_i)}{f_i}
\end{equation}
by finding a root $\gamma_n$ of
\begin{equation}
\label{eq:r}
  r(\gamma)
  =
  \eta\biggl( u^n + \gamma \dt \sum_{i=1}^{s} b_{i} f_i \biggr) - \eta(u^n)
  - \gamma \dt \sum_{i=1}^s b_i \scp{\eta'(y_i)}{f_i}.
\end{equation}
Note that the direction
\begin{equation}
\label{eq:d}
  d^n := \sum_{i=1}^{s} b_{i} f_i
\end{equation}
and the estimate of the entropy change
\begin{equation}
\label{eq:e}
  e := \dt \sum_{i=1}^s b_i \scp{\eta'(y_i)}{f_i}
\end{equation}
can be computed on the fly during the computation of the Runge--Kutta method
and are not influenced by $\gamma_n$. Hence, existing low-storage implementations
can be used. In the end, finding a root of
$r(\gamma) = \eta(u^n + \gamma d) - \eta(u^n) - \gamma e$
is just a scalar root finding problem for the convex function $r$.

\begin{remark}
  If $f$ is a semi-discretization of a (hyperbolic) PDE with entropy $S$ and
  entropy variables $w(u) = S'(u)$ in the domain $\Omega$, \eqref{eq:r}
  corresponds to a discrete version of
  \begin{equation}
    r(\gamma)
    =
    \int_\Omega S( u^n + \gamma \dt \,d^n) \dif\Omega
    - \int_\Omega S(u^n) \dif\Omega
    - \gamma \dt \sum_{i=1}^s b_i \int_\Omega w_i \cdot f_i \dif\Omega,
  \end{equation}
  since the total entropy is $\eta(u) = \int_\Omega S(u) \dif\Omega$.
\end{remark}
If $f$ is a semi-discretization of a PDE and $\eta$ the global entropy, $r(\gamma=1)$
can be interpreted as global entropy production of the unmodified Runge--Kutta
method. Indeed, $\eta(u^{n+1}) - \eta(u^n)$ is the global entropy change and $e$
is the entropy change, which has the same sign as the true entropy time derivative
if the weights $b_i \geq 0$. Hence, $r$ will sometimes be called \emph{temporal entropy
production}. Thus, finding a root of $r$ yields a scheme that is entropy conservative
for conservative problems and entropy dissipative for dissipative problems.
This can be viewed as an extension
of \cite[Theorem~2.1]{ketcheson2019relaxation}, which dealt only with inner-product norms.
\begin{theorem}
  The method defined by \eqref{eq:RK-stages} \& \eqref{eq:RK-final-gamma}, where
  $\gamma_n$ is a root of \eqref{eq:r}, is conservative.
  If the weights $b_i$ are non-negative and $\gamma_n \geq 0$, then the method is
  dissipation preserving.
\end{theorem}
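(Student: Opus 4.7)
The proof is essentially a direct unpacking of the defining equation \eqref{eq:r} for $\gamma_n$, and I would organize it around that single observation.

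My plan is to start from the fact that $\gamma_n$ is a root of $r$. Substituting the definition \eqref{eq:RK-final-gamma} of $u^{n+1}_\gamma$ into \eqref{eq:r} and setting $r(\gamma_n)=0$ gives the discrete entropy balance
\begin{equation*}
  \eta(u^{n+1}_\gamma) - \eta(u^n)
  = \gamma_n \dt \sum_{i=1}^s b_i \scp{\eta'(y_i)}{f_i},
\end{equation*}
which is the master identity for both parts of the theorem. From here, the two assertions follow by reading off signs on the right-hand side.

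For the conservative case, suppose $f$ satisfies \eqref{eq:conservative}. Then for every stage $i$,
\begin{equation*}
  \scp{\eta'(y_i)}{f_i} = \scp{\eta'(y_i)}{f(t_n + c_i\dt,\,y_i)} = 0,
\end{equation*}
because \eqref{eq:conservative} holds for all $u \in \H$ and $t \in [0,T]$. Summing against the $b_i$ gives that the right-hand side of the master identity vanishes, so $\eta(u^{n+1}_\gamma) = \eta(u^n)$. Iterating this equality over time steps yields $\eta(u^{n+1}_\gamma) = \eta(u^0)$, which is conservation.

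For the dissipation-preserving case, suppose \eqref{eq:dissipative} holds, so that each inner product $\scp{\eta'(y_i)}{f_i}$ is nonpositive. With the assumed $b_i \geq 0$ we obtain $\sum_i b_i \scp{\eta'(y_i)}{f_i} \le 0$, and multiplication by the nonnegative factor $\gamma_n \dt$ preserves this sign. The master identity then gives $\eta(u^{n+1}_\gamma) - \eta(u^n) \leq 0$, as required.

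There is no genuine obstacle: the only delicate point is that existence of the root $\gamma_n$ is assumed rather than proved here (it is a separate question addressed elsewhere in the paper, relying on convexity of $r$ and the sign of its derivative at $\gamma=0$). Given a root, the argument is purely algebraic and relies on nothing beyond the definitions of \eqref{eq:dissipative}--\eqref{eq:conservative} and \eqref{eq:r}.
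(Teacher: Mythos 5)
Your proof is correct and follows exactly the argument the paper intends: the paper gives no separate proof environment for this theorem because the identity $\eta(u^{n+1}_\gamma) - \eta(u^n) = \gamma_n \dt \sum_i b_i \scp{\eta'(y_i)}{f_i}$, obtained by setting $r(\gamma_n)=0$, is stated immediately before the theorem, and the sign-reading you perform is the whole content. Your explicit note that existence of the root is a separate matter (handled in Section 2.1 of the paper) is also consistent with how the paper structures the argument.
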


The new numerical solution $u^{n+1}_\gamma$ can be interpreted as an approximation
to either $u(t_n + \dt)$ (with scaled weights $\gamma_n b_i$) or to $u(t_n + \gamma_n \dt)$
(with scaled time step $\gamma_n \dt$).
As mentioned in \cite{ketcheson2019relaxation}, the given Runge--Kutta method determines
the direction $d$ and $\gamma_n$ can be interpreted as a relaxation parameter
determined by the requirement of preserving the evolution of $\eta$. Hence, the method defined by
\eqref{eq:RK-stages} \& \eqref{eq:RK-final-gamma} with the interpretation $u^{n+1}_\gamma
\approx u(t_n + \gamma_n \dt)$ is called a \emph{relaxation Runge--Kutta} (RRK)
method. The scheme using $u^{n+1}_\gamma \approx u(t_n + \dt)$ will be referred
to as an IDT method \cite{calvo2006preservation}.

\begin{remark}
\label{rem:bi-positive}
  Some well-known Runge--Kutta schemes do not satisfy the sufficient condition
  $b_i \ge 0$, $i \in \set{1, \dots, s}$.  For example,
  the classical fifth/fourth order pairs of Fehlberg and Dormand \& Prince
  have negative coefficients $b_5 < 0$.
\end{remark}

\subsection{Existence of a Solution}
\label{sec:existence}

Relaxation Runge--Kutta methods have been developed in
\cite{ketcheson2019relaxation} for the preservation of inner product
norms; in that setting $r(\gamma)$ is quadratic and its roots can be
explicitly computed. Here we deal instead with arbitrary functionals;
as we will see, new techniques are required.

Obviously, $r(0) = 0$ and $r$ is convex since the entropy $\eta$ is convex.
There is a positive root of $r$ if and only if $r(\gamma)$ is negative for small
$\gamma > 0$ and positive for large enough $\gamma > 0$.
\begin{lemma}
\label{lem:derivative-at-zero}
  Let a Runge--Kutta method be given with coefficients such that $\sum_{i=1}^s b_i a_{ij} > 0$
  and let $r(\gamma)$ be defined by \eqref{eq:r}.
  If $\eta''(u^n)(f_0,f_0) > 0$, then $r'(0) < 0$ for sufficiently small $\dt > 0$.
\end{lemma}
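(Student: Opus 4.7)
The plan is to compute $r'(0)$ by direct differentiation and then Taylor-expand in $\dt$ to extract the leading-order term, which will turn out to be strictly negative under the stated hypotheses.

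First, I would differentiate $r$ as defined in \eqref{eq:r}. Using the chain rule,
\begin{equation*}
  r'(\gamma)
  = \dt\,\scp{\eta'\!\biggl(u^n + \gamma \dt \sum_i b_i f_i\biggr)}{\sum_i b_i f_i} - \dt \sum_i b_i \scp{\eta'(y_i)}{f_i},
\end{equation*}
so at $\gamma = 0$ the two terms combine into
\begin{equation*}
  r'(0)
  = \dt \sum_{i=1}^s b_i \scp{\eta'(u^n) - \eta'(y_i)}{f_i}.
\end{equation*}
This reduces the question to the sign of a sum involving the differences $\eta'(u^n) - \eta'(y_i)$.

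Next, I would Taylor-expand each $\eta'(y_i)$ about $u^n$, using smoothness of $\eta$ together with $y_i - u^n = \dt \sum_{j} a_{ij} f_j$ from the stage equations \eqref{eq:RK-stages}. Since $f_j = f_0 + O(\dt)$ as $\dt \to 0$ (because $y_j \to u^n$), we get $y_i - u^n = \dt\, c_i f_0 + O(\dt^2)$, using $A\1 = c$. Substituting,
\begin{equation*}
  \scp{\eta'(u^n) - \eta'(y_i)}{f_i}
  = -\eta''(u^n)(y_i - u^n,\, f_i) + O(\norm{y_i - u^n}^2)
  = -\dt\, c_i\, \eta''(u^n)(f_0, f_0) + O(\dt^2).
\end{equation*}
Summing against $b_i$ and multiplying by $\dt$,
\begin{equation*}
  r'(0)
  = -\dt^2 \Bigl(\sum_{i=1}^s b_i c_i\Bigr) \eta''(u^n)(f_0, f_0) + O(\dt^3).
\end{equation*}

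Finally, since $c_i = \sum_j a_{ij}$, the prefactor $\sum_i b_i c_i = \sum_{i,j} b_i a_{ij}$ equals the hypothesized positive quantity. Combined with $\eta''(u^n)(f_0, f_0) > 0$, the $\dt^2$-term is strictly negative, so $r'(0) < 0$ for all sufficiently small $\dt > 0$.

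The only mildly subtle step is controlling the $O(\dt^2)$ remainder in the Taylor expansion uniformly in $i$; this just requires that $\eta$ be $C^2$ in a neighborhood of $u^n$ and that the stages $y_i$ remain bounded as $\dt \to 0$, both of which hold under standard assumptions on $f$ and a small enough step size (for implicit methods, this also uses solvability of the stage equations for small $\dt$). No serious obstacle here — the structure of the argument is a one-term Taylor expansion plus the identity $\sum_i b_i c_i = \sum_{i,j} b_i a_{ij}$.
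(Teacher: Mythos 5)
Your proposal is correct and follows essentially the same route as the paper: compute $r'(0)$ as $\dt\sum_i b_i\scp{\eta'(u^n)-\eta'(y_i)}{f_i}$, express the difference through $\eta''$ acting on $y_i-u^n=\dt\sum_j a_{ij}f_j$, expand $f_i=f_0+\O(\dt)$, and read off the strictly negative leading $\dt^2$ coefficient $-\sum_{i,j}b_ia_{ij}\,\eta''(u^n)(f_0,f_0)$. The only cosmetic difference is that the paper writes the increment of $\eta'$ as an exact integral of $\eta''$ along the segment from $u^n$ to $y_i$ rather than a one-term Taylor expansion with an $\O(\dt^2)$ remainder, which sidesteps the (harmless) regularity bookkeeping you mention at the end.
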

\begin{proof}
  By definition of $r$ \eqref{eq:r},
  \begin{equation}
  \begin{aligned}
    r'(0)
    &=
    \dt \sum_{i=1}^{s} b_{i} \scp{\eta'(u^n)}{f_i}
    - \dt \sum_{i=1}^s b_i \scp{\eta'(y_i)}{f_i}
    \\
    &=
    - \dt \sum_{i=1}^{s} b_{i}
    \int_0^1 \eta''\biggl( u^n + v \dt \sum_{k=1}^s a_{ik} f_k \biggr)
                   \biggl( f_i, \dt \sum_{j=1}^s a_{ij} f_j \biggr) \dif v.
  \end{aligned}
  \end{equation}
  Using Taylor expansions of $f_i, f_j = f_0 + \O(\dt)$,
  \begin{equation}
    r'(0)
    =
    - \dt^2 \sum_{i,j=1}^{s} b_{i} a_{ij}
    \int_0^1 \eta''\biggl( u^n + v \dt \sum_{k=1}^s a_{ik} f_k \biggr)(f_0, f_0) \dif v
    + \O(\dt)^3.
  \end{equation}
  Using the given assumptions, $r'(0) < 0$ for sufficiently small $\dt > 0$.
\end{proof}
\begin{remark}
  The assumption $\sum_{i=1}^s b_i a_{ij} > 0$ is satisfied for all (at least)
  second order accurate Runge--Kutta methods since
  $\sum_{i=1}^s b_i a_{ij} = \nicefrac{1}{2}$
  is a condition for second-order accuracy.
\end{remark}

\begin{lemma}
\label{lem:derivative-at-one}
  Let a Runge--Kutta method be given with coefficients satisfying
  $\sum_{i,j=1}^s b_i (a_{ij} - b_j) < 0$.
  If $\eta''(u^n)(f_0,f_0) > 0$, then $r'(1) > 0$ for sufficiently small $\dt > 0$.
\end{lemma}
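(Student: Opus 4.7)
The plan is to mimic the approach used in Lemma~\ref{lem:derivative-at-zero}: differentiate $r$ explicitly at $\gamma=1$, convert differences of $\eta'$ into integrals of $\eta''$ via the fundamental theorem of calculus, Taylor-expand the stage values in $\dt$, and read off the sign of the leading-order term from the algebraic hypothesis on the Butcher coefficients.

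First I would compute $r'(1)$ directly from \eqref{eq:r}. Since $u^n + \dt d^n$ coincides with the unmodified Runge--Kutta update $u^{n+1}$ and $d^n = \sum_i b_i f_i$, this collapses to
\begin{equation*}
  r'(1) = \dt \sum_{i=1}^s b_i \scp{\eta'(u^{n+1}) - \eta'(y_i)}{f_i}.
\end{equation*}
The key step is to rewrite each inner product via the fundamental theorem of calculus as
\begin{equation*}
  \scp{\eta'(u^{n+1}) - \eta'(y_i)}{f_i}
  = \int_0^1 \eta''\bigl(y_i + v(u^{n+1}-y_i)\bigr)(u^{n+1}-y_i,\, f_i)\,\dif v,
\end{equation*}
and to substitute $u^{n+1}-y_i = \dt \sum_j (b_j - a_{ij}) f_j$. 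Taylor-expanding $f_i = f_0 + \O(\dt)$ and using that $y_i + v(u^{n+1}-y_i) = u^n + \O(\dt)$ uniformly in $v$, together with continuity of $\eta''$ at $u^n$, this yields
\begin{equation*}
  r'(1) = \dt^2 \Biggl( \sum_{i,j=1}^s b_i (b_j - a_{ij}) \Biggr) \eta''(u^n)(f_0, f_0) + \O(\dt^3).
\end{equation*}
Since the hypothesis $\sum_{i,j} b_i (a_{ij} - b_j) < 0$ is equivalent to $\sum_{i,j} b_i (b_j - a_{ij}) > 0$, and $\eta''(u^n)(f_0,f_0) > 0$, the leading coefficient is strictly positive and $r'(1) > 0$ for sufficiently small $\dt > 0$.

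The main obstacle is purely technical rather than conceptual: one must verify that the remainders from the Taylor expansions of both the stage derivatives $f_i$ and the argument of $\eta''$ are uniformly $\O(\dt^3)$, so that the sign of the leading $\dt^2$ coefficient indeed controls $r'(1)$. This rests on the same standard smoothness assumptions on $f$ and $\eta$ that are tacitly in force in Lemma~\ref{lem:derivative-at-zero}, and amounts mainly to bookkeeping.
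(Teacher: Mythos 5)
Your proposal is correct and follows essentially the same route as the paper: compute $r'(1)$ from \eqref{eq:r}, express $\eta'(u^{n+1})-\eta'(y_i)$ as an integral of $\eta''$ along the segment between $y_i$ and $u^{n+1}$ using $u^{n+1}-y_i = \dt\sum_j (b_j-a_{ij})f_j$, Taylor-expand $f_i = f_0 + \O(\dt)$, and conclude the sign of the leading $\dt^2$ term from the coefficient hypothesis. The only cosmetic difference is the direction of the path parametrization and that you explicitly replace the $\eta''$ argument by $u^n$, whereas the paper keeps it under the integral and invokes continuity implicitly.
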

\begin{proof}
  By definition of $r$ \eqref{eq:r},
  \begin{equation}
  \begin{aligned}
    r'(1)
    &=
    \dt \sum_{i=1}^{s} b_i \scp{\eta'(u^{n+1})}{f_i}
    - \dt \sum_{i=1}^s b_i \scp{\eta'(y_i)}{f_i}
    \\
    &=
    - \dt \sum_{i=1}^{s} b_{i}
    \int_0^1 \eta''\biggl( u^{n+1} + v \dt \sum_{k=1}^s (a_{ik} - b_k) f_k \biggr)
                   \biggl( f_i, \dt \sum_{j=1}^s (a_{ij} - b_j) f_j \biggr) \dif v.
  \end{aligned}
  \end{equation}
  Using Taylor expansions of $f_i, f_j = f_0 + \O(\dt)$,
  \begin{equation}
    r'(1)
    =
    - \dt^2 \sum_{i,j=1}^{s} b_{i} (a_{ij} - b_j)
    \int_0^1 \eta''\biggl( u^{n+1} + v \dt \sum_{k=1}^s (a_{ik} - b_k) f_k \biggr)(f_0, f_0) \dif v
    + \O(\dt)^3.
  \end{equation}
  Using the given assumptions, $r'(1) > 0$ for sufficiently small $\dt > 0$.
\end{proof}
\begin{remark}
  The assumption $\sum_{i,j=1}^s b_i (a_{ij} - b_j) < 0$ is satisfied for all
  (at least) second order accurate Runge--Kutta methods since
  $\sum_{i,j=1}^s b_i (a_{ij} - b_j) = \nicefrac{1}{2} - 1 = \nicefrac{-1}{2}$
  in that case.
\end{remark}

Together, these results establish the existence of a positive root of $r$.
\begin{theorem}
\label{thm:existence}
   Assume that the Runge--Kutta method satisfies $\sum_{i=1}^s b_i a_{ij} > 0$
   and $\sum_{i,j=1}^s b_i (a_{ij} - b_j) < 0$, which is true for all (at least)
   second order accurate schemes.
   If $\eta''(u^n)(f_0,f_0) > 0$, $r$ \eqref{eq:r} has a positive root for
   sufficiently small $\dt > 0$.
\end{theorem}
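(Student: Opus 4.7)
The plan is to combine the two preceding lemmas with convexity of $r$ and the intermediate value theorem. Since $\eta$ is convex and the map $\gamma \mapsto u^n + \gamma \dt \sum_i b_i f_i$ is affine, $r(\gamma) = \eta(u^n + \gamma \dt \sum_i b_i f_i) - \eta(u^n) - \gamma\dt \sum_i b_i \scp{\eta'(y_i)}{f_i}$ is a convex function of $\gamma$, and by construction $r(0)=0$. So $r'$ is nondecreasing, which will do most of the work for us.

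First I would apply Lemma~\ref{lem:derivative-at-zero} to conclude that $r'(0) < 0$ for sufficiently small $\dt > 0$ (noting, as in the remark following that lemma, that the hypothesis $\sum_i b_i a_{ij} > 0$ is implied by second-order accuracy). Combined with $r(0)=0$ and continuity, this gives $r(\gamma) < 0$ for all small enough $\gamma > 0$. Next I would apply Lemma~\ref{lem:derivative-at-one} to obtain $r'(1) > 0$, again for sufficiently small $\dt$, using the accompanying remark to verify the assumption $\sum_{i,j} b_i(a_{ij}-b_j) < 0$ holds under the stated order-of-accuracy hypothesis. By taking $\dt$ small enough that both lemmas simultaneously yield their conclusions, we secure $r'(0) < 0 < r'(1)$.

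The last step is to upgrade these local pieces of information into the existence of a positive root. The convexity inequality
\begin{equation*}
  r(\gamma) \;\geq\; r(1) + r'(1)\,(\gamma - 1)
  \qquad \text{for all } \gamma \in \R,
\end{equation*}
together with $r'(1) > 0$, forces $r(\gamma) \to +\infty$ as $\gamma \to \infty$. In particular $r$ is eventually strictly positive. Since it was shown above to be strictly negative just to the right of $\gamma = 0$, the intermediate value theorem yields a positive root $\gamma_n$ of $r$, as claimed.

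I do not anticipate a serious obstacle: the two lemmas package the delicate Taylor expansion work, and all that remains is elementary reasoning about convex scalar functions. The only subtlety is making sure that the condition ``for sufficiently small $\dt$'' is uniform enough to invoke both lemmas simultaneously, which is automatic since each yields an upper bound on $\dt$ and we simply take the minimum. One might also briefly record that convexity in fact gives a root in the interval $(\gamma^*, \infty)$ where $\gamma^* \in (0,1)$ is the (unique) minimizer of $r$, which motivates Newton's method with initial guess $\gamma = 1$ used in practice.
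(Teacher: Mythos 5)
Your proof is correct and follows essentially the same route as the paper's: invoke Lemma~\ref{lem:derivative-at-zero} and Lemma~\ref{lem:derivative-at-one} for $r'(0)<0<r'(1)$, use convexity to force $r(\gamma)\to+\infty$, and conclude by the intermediate value theorem; you merely spell out the tangent-line inequality that the paper leaves implicit. The only nitpick is the parenthetical claim that the minimizer $\gamma^*$ is unique, which requires strict convexity rather than mere convexity of $r$, but this aside plays no role in the argument.
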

\begin{proof}
  Since $r(0) = 0$ and $r'(0) < 0$, $r(\gamma) < 0$ for small $\gamma > 0$.
  Because $r'(1) > 0$ and $r$ is convex, $r'$ is monotone. Hence, there must be
  a positive root of $r$.
\end{proof}

\begin{remark}
  The value $\eta''(u^n)(f_0,f_0)$ of the quadratic form $\eta''(u^n)$ is positive
  for a strictly convex entropy $\eta$ if $f_0 \neq 0$. If $f_0 = 0$ and the system
  is autonomous, every explicit Runge--Kutta method will yield a stationary solution.
  The results of Lemmas~\ref{lem:derivative-at-zero} \& \ref{lem:derivative-at-one}
  and hence of Theorem~\ref{thm:existence} still hold if we instead assume only that
  $\eta''(f_i,f_i) > 0$ for some intermediate stage $i$, since the Taylor series
  can be expanded around that value.
\end{remark}

\begin{remark}
\label{rem:shape-of-r-1}
  The proof of Theorem~\ref{thm:existence} reveals another property of $r$:
  the temporal entropy dissipation.  Since $r$ is convex, there are exactly two
  distinct roots of $r$, namely zero and the desired positive root $\gamma_n$ (if the
  assumptions of Theorem~\ref{thm:existence} are satisfied). Additionally,
  $r(\gamma) \to \infty$ for $\gamma \to \pm\infty$. Therefore, choosing a
  value of $\gamma > 0$ smaller than the positive root of $r$ results in some
  additional temporal entropy dissipation, because $r(\gamma) < 0$ in that case.
\end{remark}

\subsection{Accuracy}
\label{sec:accuracy}

At first glance, the method described above seems to be not even consistent,
since $\gamma_n \sum_j b_j = \gamma_n \ne 1$ in general.
Nevertheless, an RRK scheme is of at least the same order of accuracy
as the RK scheme it is based on. In order to prove this, we obtain several
results, which will be combined and are also interesting on their own.
Readers who are interested only in the statement of the main accuracy
result can skip these parts and continue with Theorem~\ref{thm:accuracy}
and Remark~\ref{rem:shape-of-r-2}.

The following result has been
obtained in \cite[Theorem~2.4]{ketcheson2019relaxation}.
\begin{theorem}
\label{thm:accuracy-basic}
  Let the given Runge--Kutta method be of order $p$. Consider the IDT/RRK method
  defined by \eqref{eq:RK-stages} \& \eqref{eq:RK-final-gamma} and suppose that
  $\gamma_n = 1 + \O(\dt^{p-1})$.
  \begin{enumerate}
    \item
    The IDT method interpreting $u^{n+1}_\gamma \approx u(t_n + \dt)$
    has order $p-1$.

    \item
    The relaxation method interpreting $u^{n+1}_\gamma \approx u(t_n + \gamma_n \dt)$
    has order $p$.
  \end{enumerate}
\end{theorem}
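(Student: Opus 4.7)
The plan is to treat both statements as local truncation error estimates, combining the order-$p$ accuracy of the underlying Runge--Kutta method with the assumed closeness of $\gamma_n$ to $1$. Throughout, write $d^n := \sum_i b_i f_i$, so that $u^{n+1} = u^n + \dt\, d^n$ is the unmodified RK step and $u^{n+1}_\gamma = u^n + \gamma_n \dt\, d^n$ is the relaxation/IDT step; note $d^n = \O(1)$ as $\dt \to 0$ since $f$ is smooth and the stage values depend smoothly on $\dt$.

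For part 1 (the IDT interpretation), I would simply split
\begin{equation*}
  u^{n+1}_\gamma - u(t_n + \dt)
  = (\gamma_n - 1)\dt\, d^n + \bigl(u^{n+1} - u(t_n + \dt)\bigr).
\end{equation*}
The second summand is $\O(\dt^{p+1})$ by the order-$p$ hypothesis on the underlying RK method, while the first is $\O(\dt^{p-1}) \cdot \dt \cdot \O(1) = \O(\dt^{p})$ by the hypothesis $\gamma_n - 1 = \O(\dt^{p-1})$. The local truncation error is therefore $\O(\dt^{p})$, giving global order $p-1$ by the usual Lady Windermere's fan argument.

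For part 2 (the relaxation interpretation) the key is to Taylor expand both sides around $t_n$ and exploit the fact that the coefficient of $\dt^1$ matches perfectly. Since $u^{n+1} = u(t_n + \dt) + \O(\dt^{p+1})$, matching Taylor series gives
\begin{equation*}
  \dt\, d^n = \sum_{k=1}^{p} \frac{\dt^{k}}{k!}\, u^{(k)}(t_n) + \O(\dt^{p+1}),
\end{equation*}
so that
\begin{equation*}
  u^{n+1}_\gamma = u^n + \gamma_n \sum_{k=1}^{p} \frac{\dt^{k}}{k!}\, u^{(k)}(t_n) + \O(\dt^{p+1}).
\end{equation*}
Comparing with the exact expansion
\begin{equation*}
  u(t_n + \gamma_n \dt) = u^n + \sum_{k=1}^{\infty} \frac{(\gamma_n\dt)^{k}}{k!}\, u^{(k)}(t_n),
\end{equation*}
the difference is controlled term-by-term through the factor $\gamma_n - \gamma_n^k = \gamma_n(1 - \gamma_n^{k-1})$. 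This vanishes for $k=1$, which is the crucial cancellation, and equals $\O(\dt^{p-1})$ for $k \geq 2$ under the assumption $\gamma_n = 1 + \O(\dt^{p-1})$, so each term contributes $\O(\dt^{k+p-1}) = \O(\dt^{p+1})$ for $k \geq 2$. The high-frequency tail from $k \geq p+1$ in $u(t_n+\gamma_n\dt)$ is likewise $\O(\dt^{p+1})$ since $\gamma_n$ stays bounded. Hence the local error of the relaxation interpretation is $\O(\dt^{p+1})$, so global order $p$ follows.

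The only delicate point is the cancellation at $k=1$: both $u^{n+1}_\gamma$ and $u(t_n+\gamma_n\dt)$ agree to order $\dt^1$ because $\sum_i b_i = 1$ and $u'(t_n) = f_0$, which is what allows the mismatched factor $\gamma_n$ vs.\ $\gamma_n^k$ to produce $\O(\dt^{p+1})$ rather than $\O(\dt^{p})$ as in part 1. No substantial obstacle is expected beyond this bookkeeping; smoothness of $f$ and $\eta$ is implicit and needed only to justify the Taylor expansions and the boundedness of $d^n$.
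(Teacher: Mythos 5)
Your argument is correct, but it is worth noting that the paper itself does not prove this statement at all: Theorem~\ref{thm:accuracy-basic} is imported verbatim from \cite[Theorem~2.4]{ketcheson2019relaxation}, where the proof is carried out via an expansion of the local error of the $\gamma$-scaled method (a B-series/order-conditions style analysis of the method with weights $\gamma b_i$). What you do differently is to use the order-$p$ accuracy of the base method purely as a black box: from $u^{n+1}=u(t_n+\dt)+\O(\dt^{p+1})$ you extract $\dt\,d^n=\sum_{k=1}^{p}\frac{\dt^k}{k!}u^{(k)}(t_n)+\O(\dt^{p+1})$ and then compare $u^n+\gamma_n\dt\,d^n$ against the Taylor expansion of $u(t_n+\gamma_n\dt)$ term by term, with the decisive cancellation $\gamma_n-\gamma_n^k=0$ at $k=1$ and $\gamma_n-\gamma_n^k=\O(\dt^{p-1})$ for $k\ge 2$. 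This buys a short, self-contained, and elementary proof that never touches elementary differentials or rooted trees; the price is that it says nothing about which order conditions of the scaled method fail first, information that the B-series route makes explicit. Your part~1 decomposition $(\gamma_n-1)\dt\,d^n+(u^{n+1}-u(t_n+\dt))$ is the standard one and is fine. Two small points you should make explicit if you write this up: the Lady Windermere's fan step for the relaxation interpretation runs over the nonuniform, solution-dependent grid $t_{n+1}=t_n+\gamma_n\dt$, so you need $\gamma_n$ bounded away from zero (which your hypothesis gives for small $\dt$) to guarantee $\O(1/\dt)$ steps reach the final time; and the tail bound requires $u\in C^{p+1}$, i.e.\ sufficient smoothness of $f$, which you correctly flag as implicit.
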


Using $u^n$ as initial value for $u$ at $t_n$, a Runge--Kutta method with order
of accuracy $p$ yields
\begin{equation}
\label{eq:first-step-to-accuracy}
\begin{aligned}
  &\phantom{=\eta}
  \eta(u^{n+1}) - \eta(u^n)
  =
  \eta(u(t_n + \dt)) - \eta(u^n) + \O(\dt^{p+1})
  \\
  &=
  \int_{t_n}^{t_n + \dt} \scp{\eta'(u(t))}{f(t, u(t))} \dif t + \O(\dt^{p+1})
  \\
  &=
  \dt \sum_{i=1}^s b_i \scp{\eta'(u(t_n + c_i \dt))}{f(t_n + c_i \dt, u(t_n + c_i \dt))}
  + \O(\dt^{p+1})
\end{aligned}
\end{equation}
because of the required accuracy as a quadrature rule. Although the stage values
$y_i$ are not necessarily high-order approximations of $u(t_n + c_i \dt)$, the
Runge--Kutta order conditions guarantee
\begin{equation}
  \sum_{i=1}^s b_i f(t_n + c_i \dt, y_i)
  =
  \sum_{i=1}^s b_i f(t_n + c_i \dt, u(t_n + c_i \dt)) + \O(\dt^p).
\end{equation}
Hence, it is interesting to know whether $f$ can be replaced by any smooth
function in this equation.
\begin{theorem}
\label{thm:integration-accuracy}
  Let $W$ be a Banach space, $\psi\colon [0,T] \times \H \to W$ a smooth function, and
  $b_i, c_i$ coefficients of a Runge--Kutta method of order $p$.  Then
  \begin{equation}
    \sum_{i=1}^s b_i \psi(t_n + c_i \dt, y_i)
    =
    \sum_{i=1}^s b_i \psi(t_n + c_i \dt, u(t_n + c_i \dt)) + \O(\dt^p).
  \end{equation}
\end{theorem}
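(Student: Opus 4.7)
The plan is to introduce an auxiliary dependent variable and recognize the left-hand sum as a Runge--Kutta update for a decoupled component. Specifically, I would define $z\colon [t_n, t_n + \dt] \to W$ by $z'(t) = \psi(t, u(t))$ with $z(t_n) = 0$, and consider the augmented ODE system for $(u, z)$ on $\H \times W$. Since the right-hand side of the $z$-equation depends only on $t$ and $u$, the augmented system is triangular: applying the given Runge--Kutta method produces exactly the same stage values $y_i$ for the $u$-component as before, while the update for the $z$-component reads
\begin{equation*}
  z^{n+1} = \dt \sum_{i=1}^{s} b_i \psi(t_n + c_i \dt, y_i).
\end{equation*}

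Next, I would invoke the standard local truncation bound: a Runge--Kutta method of order $p$ applied to an ODE with a sufficiently smooth right-hand side in a (Hilbert or Banach) vector space satisfies $z^{n+1} = z(t_n + \dt) + \O(\dt^{p+1})$. Since $z(t_n + \dt) - z(t_n) = \int_{t_n}^{t_n + \dt} \psi(t, u(t)) \dif t$ by construction, this yields
\begin{equation*}
  \dt \sum_{i=1}^{s} b_i \psi(t_n + c_i \dt, y_i)
  = \int_{t_n}^{t_n + \dt} \psi(t, u(t)) \dif t + \O(\dt^{p+1}).
\end{equation*}

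Independently, the classical order conditions $\sum_{i} b_i c_i^{k-1} = 1/k$ for $1 \leq k \leq p$ imply that $(b_i, c_i)$ is a quadrature rule of order at least $p$ on the unit interval. Applied to the smooth integrand $t \mapsto \psi(t, u(t))$, this gives
\begin{equation*}
  \dt \sum_{i=1}^{s} b_i \psi(t_n + c_i \dt, u(t_n + c_i \dt))
  = \int_{t_n}^{t_n + \dt} \psi(t, u(t)) \dif t + \O(\dt^{p+1}).
\end{equation*}
Subtracting the two equalities and dividing by $\dt$ proves the claim.

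The main obstacle is justifying the Banach-space version of the RK convergence theorem used in the second paragraph; this is not deep, since the order conditions are purely algebraic (multilinear identities on elementary differentials) and the classical proof via Taylor/B-series expansion of the local truncation error carries over verbatim, requiring only that $\psi$ (and hence the augmented right-hand side) be smooth enough to admit derivatives up to order $p$. A minor point to verify is that the smoothness assumption on $\psi$ stated in the theorem is indeed sufficient for the required derivatives of the exact flow of the augmented system to exist, which again follows from $f$ being smooth (so that $u(t)$ is smooth) together with smoothness of $\psi$.
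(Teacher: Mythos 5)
Your proposal is correct and coincides with the paper's own argument: the paper likewise introduces $\phi(t) = \int_{t_n}^t \psi(\tau, u(\tau)) \dif\tau$, applies the Runge--Kutta method to the augmented ODE to identify $\dt \sum_i b_i \psi(t_n + c_i\dt, y_i)$ with the $p$-th order accurate update of that component, and combines this with the order-$p$ quadrature property of $(b_i, c_i)$. No substantive differences.
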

\begin{corollary}
\label{cor:accuracy}
  If $\eta$ is smooth and the given Runge--Kutta method is $p$-th order accurate,
  $r(\gamma=1) = \O(\dt^{p+1})$.
\end{corollary}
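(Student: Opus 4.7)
The plan is to observe that $r(\gamma=1)$ is precisely the defect between two quadrature-like approximations of the entropy change over the step, both of which the given $p$-th order Runge--Kutta method approximates to the required accuracy. I would combine the two pre-stated ingredients \eqref{eq:first-step-to-accuracy} and Theorem~\ref{thm:integration-accuracy} rather than redo any Taylor expansions from scratch.

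First I would write out
\begin{equation*}
  r(1) = \eta(u^{n+1}) - \eta(u^n) - \dt \sum_{i=1}^s b_i \scp{\eta'(y_i)}{f_i},
\end{equation*}
where $u^{n+1}$ is the unmodified RK update. Next, using \eqref{eq:first-step-to-accuracy}, I replace the entropy difference by its continuous-time integral approximated by the $b_i$-quadrature along the exact solution:
\begin{equation*}
  \eta(u^{n+1}) - \eta(u^n)
  = \dt \sum_{i=1}^s b_i \scp{\eta'(u(t_n + c_i \dt))}{f(t_n + c_i \dt, u(t_n + c_i \dt))}
  + \O(\dt^{p+1}).
\end{equation*}

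Then I would apply Theorem~\ref{thm:integration-accuracy} to the scalar-valued smooth function $\psi(t,u) := \scp{\eta'(u)}{f(t,u)}$ (taking $W = \R$), which gives
\begin{equation*}
  \sum_{i=1}^s b_i \scp{\eta'(y_i)}{f_i}
  = \sum_{i=1}^s b_i \scp{\eta'(u(t_n + c_i \dt))}{f(t_n + c_i \dt, u(t_n + c_i \dt))}
  + \O(\dt^p).
\end{equation*}
Multiplying by $\dt$ turns the $\O(\dt^p)$ remainder into $\O(\dt^{p+1})$. Substituting both expansions into the expression for $r(1)$, the two quadrature sums along the exact solution cancel identically, leaving only the two remainder terms of size $\O(\dt^{p+1})$. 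This yields the claim.

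There is essentially no obstacle here beyond verifying that $\psi$ satisfies the smoothness hypothesis of Theorem~\ref{thm:integration-accuracy}, which follows from the assumed smoothness of $\eta$ (and hence $\eta'$) and of $f$. The only conceptual point worth emphasizing in the write-up is the factor of $\dt$ that converts the stage-replacement error from $\O(\dt^p)$ into the desired $\O(\dt^{p+1})$; everything else is a direct substitution.
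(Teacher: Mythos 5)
Your proposal is correct and follows essentially the same route as the paper: the paper's proof likewise applies Theorem~\ref{thm:integration-accuracy} to $\psi(t,u) = \scp{\eta'(u)}{f(t,u)}$ and combines it with \eqref{eq:first-step-to-accuracy}. You merely spell out the cancellation of the two quadrature sums and the extra factor of $\dt$ more explicitly than the paper does.
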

\begin{proof}[Proof of Corollary~\ref{cor:accuracy}]
  Apply Theorem~\ref{thm:integration-accuracy} to $\psi(t,u) = \scp{\eta'(u)}{f(t, u)}$
  and use \eqref{eq:first-step-to-accuracy}, resulting in
  \begin{equation}
    \eta(u^{n+1}) - \eta(u^n)
    =
    \dt \sum_{i=1}^s b_i \scp{\eta'(y_i)}{f(t_n + c_i \dt, y_i)}
  + \O(\dt^{p+1}).
  \end{equation}
\end{proof}
\begin{proof}[Proof of Theorem~\ref{thm:integration-accuracy}]
  Consider $\phi(t) = \int_{t_n}^t \psi(\tau, u(\tau)) \dif \tau$.
  Applying the Runge--Kutta method to the extended ODE (with a slight abuse of
  notation)
  \begin{equation}
  \label{eq:extended-ode}
    \od{}{t} \underbrace{\begin{pmatrix} \phi(t) \\ u(t) \end{pmatrix}}_{= x(t)}
    =
    \begin{pmatrix} \psi(t, u(t)) \\ f(t, u(t)) \end{pmatrix},
    \; t \in (t_n,T),
    \qquad
    \begin{pmatrix} \phi(t_n) \\ u(t_n) \end{pmatrix}
    =
    \begin{pmatrix} 0 \\ u^n \end{pmatrix},
  \end{equation}
  yields the same stage values $y_i$ for the second component $u$ of $x$. Since
  the method is $p$-th order accurate,
  \begin{equation}
  \label{eq:integration-accuracy-1}
    \dt \sum_{i=1}^s b_i \psi(t_n + c_i \dt, y_i)
    =
    \phi^{n+1}
    =
    \phi(t_n + \dt) + \O(\dt^{p+1}).
  \end{equation}
  Additionally,
  \begin{equation}
  \label{eq:integration-accuracy-2}
    \phi(t_n + \dt)
    =
    \int_{t_n}^{t_n + \dt} \psi(t, u(t)) \dif t
    =
    \dt \sum_{i=1}^s b_i \psi(t_n + c_i \dt, u(t_n + c_i \dt))
    + \O(\dt^{p+1}).
  \end{equation}
  Combining \eqref{eq:integration-accuracy-1} and \eqref{eq:integration-accuracy-2}
  yields the desired result.
\end{proof}
\begin{remark}
  Theorem~\ref{thm:integration-accuracy} can be seen as a superconvergence
  result for integrals evaluated using the quadrature rule associated with a
  Runge--Kutta method.
  It extends a related result of \cite[Lemma~4]{ketcheson2019relaxation}
  in two ways. Firstly, general functionals instead of the energy are
  considered. Secondly, the proof is simplified and does not rely on
  extensive computations involving the theory of Butcher series.
\end{remark}

\begin{theorem}
\label{thm:accuracy}
  Assume that the conditions of Theorem~\ref{thm:existence} are satisfied. Hence,
  there exists a unique positive root $\gamma_n$ of $r$ \eqref{eq:r}.
  Consider the IDT/RRK method defined by \eqref{eq:RK-stages} \&
  \eqref{eq:RK-final-gamma} and suppose that the given Runge--Kutta method is
  $p$-th order accurate.
  \begin{enumerate}
    \item
    The IDT method interpreting $u^{n+1}_\gamma \approx u(t_n + \dt)$
    has order $p-1$.

    \item
    The relaxation method interpreting $u^{n+1}_\gamma \approx u(t_n + \gamma_n \dt)$
    has order $p$.
  \end{enumerate}
\end{theorem}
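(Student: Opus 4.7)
The plan is to reduce the statement to Theorem~\ref{thm:accuracy-basic} by showing that $\gamma_n = 1 + \O(\dt^{p-1})$, which is exactly the hypothesis needed there. All the ingredients are already in place: Corollary~\ref{cor:accuracy} controls the residual $r(1)$, and the computations carried out in the proof of Lemma~\ref{lem:derivative-at-one} give a lower bound on the slope $r'(1)$ in terms of $\dt$. Combining these through a Taylor expansion of $r$ around $\gamma = 1$ pins down the position of the positive root $\gamma_n$.

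First, I would invoke Corollary~\ref{cor:accuracy} to record that $r(1) = \O(\dt^{p+1})$. Next, I would revisit the Taylor expansion already performed inside the proof of Lemma~\ref{lem:derivative-at-one}: it shows that
\begin{equation*}
  r'(1)
  =
  -\dt^2 \sum_{i,j=1}^{s} b_{i} (a_{ij} - b_j)\,
  \eta''(u^n)(f_0,f_0) + \O(\dt^3)
  =
  \tfrac{1}{2}\dt^2 \eta''(u^n)(f_0,f_0) + \O(\dt^3),
\end{equation*}
using $\sum_{i,j} b_i(a_{ij}-b_j) = -\tfrac{1}{2}$ for at least second-order methods. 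Under the hypothesis $\eta''(u^n)(f_0,f_0) > 0$ from Theorem~\ref{thm:existence}, this means $r'(1)$ is bounded below by a positive multiple of $\dt^2$ for all sufficiently small $\dt$.

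Then I would expand $r$ around $\gamma = 1$:
\begin{equation*}
  0 = r(\gamma_n)
  =
  r(1) + r'(1)(\gamma_n - 1) + \tfrac{1}{2} r''(\xi)(\gamma_n - 1)^2,
\end{equation*}
for some $\xi$ between $1$ and $\gamma_n$. Convexity of $r$ together with $r(0)=0$ and $r'(1)>0$ forces $\gamma_n$ to lie in a neighborhood of $1$ that shrinks as $\dt \to 0$; more quantitatively, dropping the nonnegative quadratic term on the side making $r$ larger and isolating $\gamma_n - 1$ gives $|\gamma_n - 1| \leq |r(1)|/r'(1) + \O(|\gamma_n-1|^2)$. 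Substituting the bounds above yields $\gamma_n - 1 = \O(\dt^{p+1})/\O(\dt^2) = \O(\dt^{p-1})$, which can be bootstrapped via the quadratic remainder since it is of even higher order. With this estimate in hand, both claims follow immediately from Theorem~\ref{thm:accuracy-basic}.

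The main technical point is controlling the quadratic remainder in the Taylor expansion well enough to turn the two-sided estimate on $\gamma_n - 1$ into the clean $\O(\dt^{p-1})$ bound; this is where the convexity of $r$ (ruling out spurious roots near $\gamma = 1$) and the uniform positive lower bound on $r'(1)/\dt^2$ are essential. Everything else is routine once the chain $r(1) \to \gamma_n - 1 \to$ Theorem~\ref{thm:accuracy-basic} is set up.
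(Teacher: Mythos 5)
Your proof is correct and follows the paper's argument exactly: invoke Corollary~\ref{cor:accuracy} for $r(1) = \O(\dt^{p+1})$, read off $r'(1) = c\,\dt^2 + \O(\dt^3)$ with $c>0$ from the computation in the proof of Lemma~\ref{lem:derivative-at-one}, conclude $\gamma_n = 1 + \O(\dt^{p-1})$, and finish by applying Theorem~\ref{thm:accuracy-basic}. You actually supply more detail than the paper does on the middle step (the Taylor expansion of $r$ about $\gamma=1$ and the control of the quadratic remainder via convexity), which the paper states without elaboration.
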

\begin{proof}
  Because of Corollary~\ref{cor:accuracy}, $r(1) = \O(\dt^{p+1})$.
  As can be seen in the proof of Lemma~\ref{lem:derivative-at-one},
  $r'(1) = c \dt^2 + \O(\dt^3)$, where $c > 0$.
  Hence, there is a root $\gamma_n = 1 + \O(\dt^{p-1})$ of $r$ \eqref{eq:r}.
  Applying Theorem~\ref{thm:accuracy-basic} yields the desired accuracy result.
\end{proof}

\begin{remark}
\label{rem:shape-of-r-2}
  As an extension of Remark~\ref{rem:shape-of-r-1}, the behavior of the temporal
  entropy dissipation $r$ \eqref{eq:r} can be described as follows for sufficiently
  small $\dt$ if the assumptions of Theorem~\ref{thm:existence} are satisfied:
  Firstly, $r(0) = 0$, $r(1) = \O(\dt^{p+1}) \approx 0$, and there is a
  unique $0 < \gamma_n = 1 + \O(\dt^{p-1})$ such that $r(\gamma_n) = 0$. Between
  zero and this root of $r$, the values of $r$ are negative, i.e. additional
  entropy dissipation is introduced in that region. Outside of the bounded
  interval given by zero and $\gamma_n$, $r$ is positive and the time
  integration scheme produces entropy. Additionally, $r(\gamma) \to \infty$ for
  $\gamma \to \pm\infty$. Finally, $r$ is convex and looks approximately similar
  to a parabola with the same roots for sufficiently small $\dt > 0$.
  See Figure~\ref{fig:cons_exp_entropy__r_over_gamma} for a typical plot of $r(\gamma)$.
\end{remark}

\begin{remark}
\label{rem:higher-order-possible}
  Theorem~\ref{thm:accuracy} gives a guaranteed minimal order of accuracy.
  For some specific problems and schemes, the resulting order of accuracy
  can be even greater. For example, applying the classical third order,
  three stage method of Heun to the harmonic oscillator
  \begin{equation}
    u_1'(t) = -u_2(t), \quad u_2'(t) = u_1(t),
  \end{equation}
  with entropy (energy) $\eta(u) = \norm{u}^2  / 2$,
  it can be shown that with relaxation the rate of convergence is
  \emph{fourth order}.
  The same result holds true for a nonlinear oscillator given by
  \begin{equation}
    u_1'(t) = -\norm{u}^2 u_2(t), \quad u_2'(t) = \norm{u}^2 u_1(t),
  \end{equation}
  and the same entropy $\eta$.
\end{remark}

\subsection{Additional Properties and Generalizations}
\label{sec:properties}

As described in \cite{ketcheson2019relaxation}, relaxation RK methods still conserve
linear invariants, although $\gamma_n$ is determined in a nonlinear way. Such linear
invariants are e.g. the total mass for a semi-discretization of a hyperbolic
conservation law in a periodic domain.

Another desirable stability property of numerical time integration schemes is
the preservation of convex stability properties that hold for the explicit
Euler method. Such schemes are called \emph{strong stability preserving} (SSP),
as described in the monograph \cite{gottlieb2011strong} and references cited
therein. It has been shown in \cite[Section~3]{ketcheson2019relaxation} that
the relaxation modification of many SSP methods retains the same SSP property
of the original method as long as $\gamma_n$ deviates not too much from unity.

If there are several convex quantities $\eta_i$ which do not necessarily have to
be conserved but might also be dissipated, one could compute a relaxation factor
$\gamma_{n,i}$ for every $\eta_i$ and choose $\gamma_n = \min_i \gamma_{n,i}$. The resulting
scheme will dissipate every entropy (if $b_i \geq 0$) because of the general
shape of the temporal entropy dissipation $r$, cf. Remark~\ref{rem:shape-of-r-2}.

If concave quantities (which shall typically increase) are of interest, they
can be treated in the same framework using a sign change of $\eta$. If general
functions $\eta$ without any convexity/concavity assumptions are of interest,
relaxation and IDT methods can still be applied.
\begin{proposition}
\label{pro:general-functions}
  Suppose that the given Runge--Kutta method is $p$-th order accurate with $p \geq 2$.
  If $\scp{\eta'(u^{n+1})}{d^n / \norm{d^n}}
  = B(u^n) \dt + \O(\dt^2)$ with $B(u^n) \neq 0$,
  then $r$ \eqref{eq:r} has a positive root $\gamma_n = 1 + \O(\dt^{p-1})$.
  If this root is used to define IDT/RRK methods by \eqref{eq:RK-stages} \&
  \eqref{eq:RK-final-gamma}, then:
  \begin{enumerate}
    \item
    The IDT method interpreting $u^{n+1}_\gamma \approx u(t_n + \dt)$
    has order $p-1$.

    \item
    The relaxation method interpreting $u^{n+1}_\gamma \approx u(t_n + \gamma_n \dt)$
    has order $p$.
  \end{enumerate}
\end{proposition}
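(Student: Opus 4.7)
The plan is to adapt the proof of Theorem~\ref{thm:accuracy} to the non-convex setting, replacing the existence argument that relied on convexity of $r$ (Theorem~\ref{thm:existence}) by a perturbation/implicit-function argument anchored at $\gamma=1$ and driven by the hypothesis on $B(u^n)$.

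To begin, I note that Corollary~\ref{cor:accuracy} and the underlying Theorem~\ref{thm:integration-accuracy} only use smoothness of $\eta$ together with the $p$-th order accuracy of the Runge--Kutta method; convexity of $\eta$ is never invoked in their proofs. Consequently the bound $r(1)=\O(\dt^{p+1})$ carries over to the present setting unchanged.

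Next, I would expand $r'(1)=\dt\scp{\eta'(u^{n+1})}{d^n}-e$ to leading order in $\dt$. The hypothesis immediately yields $\dt\scp{\eta'(u^{n+1})}{d^n}=B(u^n)\norm{d^n}\dt^2+\O(\dt^3)$. For $e=\dt\sum_i b_i\scp{\eta'(y_i)}{f_i}$, I would Taylor-expand each $y_i$ and $f_i$ about $(t_n,u^n)$ and apply the simplifying conditions $\sum_i b_i=1$ and $\sum_i b_ic_i=\nicefrac{1}{2}$, together with the consistency relation $\scp{\eta'(u^n)}{f_0}=\O(\dt)$ that is forced by the hypothesis (since the $\O(1)$ term of $\scp{\eta'(u^{n+1})}{d^n/\norm{d^n}}$ must vanish for the assumed expansion to hold, and $u^{n+1}=u^n+\O(\dt)$). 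Combining the two expansions gives $r'(1)=C(u^n)\dt^2+\O(\dt^3)$, with $C(u^n)\neq 0$ whenever $B(u^n)\neq 0$.

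With $r(1)=\O(\dt^{p+1})$ and $r'(1)$ of size exactly $\dt^2$ with nonzero leading coefficient, the implicit function theorem applied to the rescaled equation $r(\gamma)/\dt^2=0$ near $(\gamma,\dt)=(1,0)$ produces a root $\gamma_n=1-r(1)/r'(1)+\text{higher order terms}=1+\O(\dt^{p-1})$, which is positive for sufficiently small $\dt$. Both accuracy statements then follow at once from Theorem~\ref{thm:accuracy-basic}, whose proof likewise does not rely on convexity of $\eta$.

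The main obstacle lies in the expansion of $r'(1)$: the hypothesis controls only $\scp{\eta'(u^{n+1})}{d^n/\norm{d^n}}$, whereas $r'(1)$ also depends on $e$, whose expansion must be computed to second order in $\dt$. The delicate task is to track the cancellations between these two contributions and to identify the surviving $\dt^2$-coefficient with a non-vanishing quantity tied to $B(u^n)$; once this is in place, the remainder of the argument parallels the convex case and the accuracy assertions are obtained by direct appeal to Theorem~\ref{thm:accuracy-basic}.
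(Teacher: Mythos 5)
Your overall skeleton --- establish $r(1)=\O(\dt^{p+1})$ via Corollary~\ref{cor:accuracy} (whose proof indeed never uses convexity), show that $r'(1)$ has a nonzero coefficient at order $\dt^2$, apply the implicit function theorem at $\gamma=1$, and finish with Theorem~\ref{thm:accuracy-basic} --- is the same as the paper's; the paper merely outsources the implicit-function step to Theorem~2 of \cite{calvo2010projection} after rewriting $u^{n+1}_\gamma=u^{n+1}+\lambda_n\, d^n/\norm{d^n}$ with $\lambda_n=(\gamma_n-1)\dt\norm{d^n}$. The problem is that the one step you defer (``the delicate task is to track the cancellations \dots and to identify the surviving $\dt^2$-coefficient with a non-vanishing quantity tied to $B(u^n)$'') does not come out the way you assert. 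Carrying the expansion through: the hypothesis forces $\scp{\eta'(u^n)}{f_0}=0$ (as you note), whence $\dt\,\scp{\eta'(u^{n+1})}{d^n}=B(u^n)\norm{f_0}\,\dt^2+\O(\dt^3)$ with $B(u^n)\norm{f_0}=\eta''(u^n)(f_0,f_0)+\tfrac12\scp{\eta'(u^n)}{\dot f_0}$, while $e=\tfrac{\dt^2}{2}\bigl(\eta''(u^n)(f_0,f_0)+\scp{\eta'(u^n)}{\dot f_0}\bigr)+\O(\dt^3)$, where $\dot f_0$ denotes the total time derivative of $f$ along the solution at $t_n$ and $\sum_i b_i c_i=\tfrac12$ was used. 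Subtracting, the $\scp{\eta'}{\dot f_0}$ contributions cancel and $r'(1)=\tfrac{\dt^2}{2}\,\eta''(u^n)(f_0,f_0)+\O(\dt^3)$, exactly the leading term of Lemma~\ref{lem:derivative-at-one}. The coefficient $C(u^n)$ is therefore \emph{not} controlled by $B(u^n)$: one can have $B(u^n)\neq0$ while $\eta''(u^n)(f_0,f_0)=0$ (an indefinite or degenerate Hessian with $f_0$ in its null cone and $\scp{\eta'(u^n)}{\dot f_0}\neq0$). So the claim ``$C(u^n)\neq 0$ whenever $B(u^n)\neq 0$'' is false, and your argument silently reinstates the convexity-type assumption $\eta''(u^n)(f_0,f_0)\neq0$ that the proposition is specifically designed to avoid.

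The hypothesis on $B$ is usable only in the $\lambda$-parametrization the paper adopts: there the nondegeneracy fed into the implicit function theorem is $\partial_\lambda\,\eta(u^{n+1}+\lambda w)\big|_{\lambda=0}=\scp{\eta'(u^{n+1})}{w}=B(u^n)\dt+\O(\dt^2)$, i.e.\ the derivative of the \emph{first} term of $r$ alone, before any cancellation against the $e$-term; combined with $r(1)=\O(\dt^{p+1})$ this gives $\lambda_n=\O(\dt^{p})$ and hence $\gamma_n=1+\O(\dt^{p-1})$. A self-contained proof would have to work in that formulation (and would also need to account for the fact that in $r$ the subtracted term is $\gamma e$ rather than a $\gamma$-independent constant, whose $\lambda$-derivative $e/(\dt\norm{d^n})$ is of the same order $\O(\dt)$ as $B\dt$ --- a point on which the paper's own sketch is itself terse). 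As written, your direct expansion of $r'(1)$ cannot close the argument.
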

\begin{proof}
  The proof of \cite[Theorem~2]{calvo2010projection} using the implicit function
  theorem can be adapted to this setting; the normalized search direction considered
  there is $w = d^n / \norm{d^n}$ and the projected value is
  $u^{n+1}_\gamma = u^{n+1} + (1-\gamma_n) \dt \, d^n = u^{n+1} + \lambda_n w$,
  i.e. the step parameters are related via $\gamma_n = 1 + \lambda_n / \norm{\dt \, d^n}$.
  Since there is a solution $\lambda_n = \O(\dt^{p})$ and $\dt \, d^n = \dt \sum_{i=1}^s b_i f_i$
  scales as $\dt$, there is a solution $\gamma_n = 1 + \O(\dt^{p-1})$. Applying
  Theorem~\ref{thm:accuracy-basic} yields the desired results.
\end{proof}

\begin{remark}
  While Proposition~\ref{pro:general-functions} can be applied to general functions
  $\eta$, the detailed existence and accuracy results developed in the previous sections
  reveal more properties in the convex case and provide additional insights. These
  additional properties (such as the general shape of $r$, possible entropy
  dissipation by smaller values of $\gamma_n$) are useful for applications and
  root finding procedures.
\end{remark}

\subsection{Implementation}
\label{sec:implementation}

For a given Runge--Kutta method with coefficients $a_{ij}$, $b_i$, the relaxation method
defined by \eqref{eq:RK-stages} \& \eqref{eq:RK-final-gamma} requires additionally
only the solution of a scalar equation, which can be done effectively using standard
methods. The derivative of $r$ is
\begin{equation}
  r'(\gamma) = \scp{\eta'(u^n + \gamma \dt \, d^n)}{\dt \, d^n} - e,
\end{equation}
where the direction $\dt \, d^n$ and the estimate $e$ are defined
as in \eqref{eq:r} and \eqref{eq:e}, respectively.

For most of the numerical experiments presented below, \texttt{scipy.optimize.brentq}
(using Brent’s method \cite[Chapters~3--4]{brent1973algorithms})
or \texttt{scipy.optimize.root} with \texttt{method='lm'} (using a modification of the Levenberg-Marquardt algorithm as implemented in MINPACK \cite{more1980user})
from SciPy \cite{scipy} have been used. In most cases, Brent’s method is more
efficient.  For the first step, $\gamma = 1$ is a good initial guess;
cf. Section~\ref{sec:accuracy}.  In subsequent steps the previous value of
$\gamma$ is chosen as initial guess, since $\gamma$ changes only slightly from
step to step. Implementations used for the numerical examples up to
section~\ref{sec:other-eqs} are provided in
\cite{ranocha2019relaxationRepository}.

In particular for any convex entropy $\eta$, standard results of numerical
analysis guarantee that Newton's method converges if the conditions of the
existence and accuracy theorems are satisfied
\cite[Theorem~1.9]{suli2003introduction}. Optimized implementations that
are robust and efficient for both small (ODE) and large (PDE) problems
are left for future research.

\section{Numerical Examples\label{sec:numerical}}

The following Runge--Kutta methods with weights $b_i \geq 0$ will be used in the
numerical experiments.  The value of $\dt$ is fixed in each test and embedded
error estimators are not used.
\begin{itemize}
  \item
  SSPRK(2,2): Two stage, second order SSP method of \cite{shu1988efficient}.

  \item
  SSPRK(3,3): Three stage, third order SSP method of \cite{shu1988efficient}.

  \item
  SSPRK(10,4): Ten stage, fourth order SSP method of \cite{ketcheson2008highly}.

  \item
  RK(4,4): Classical four stage, fourth order method.

  \item
  BSRK(8,5): Eight stage, fifth order method of \cite{bogacki1996efficient}.

  \item
  VRK(9,6): Nine stage, sixth order method of the family developed
  in \cite{verner1978explicit}\footnote{The coefficients are taken from \url{http://people.math.sfu.ca/~jverner/RKV65.IIIXb.Robust.00010102836.081204.CoeffsOnlyFLOAT}
  at 2019-04-27.}.

  \item
  VRK(13,8): Thirteen stage, eight order method of the family developed
  in \cite{verner1978explicit}\footnote{The coefficients are taken from \url{http://people.math.sfu.ca/~jverner/RKV87.IIa.Robust.00000754677.081208.CoeffsOnlyFLOAT}
  at 2019-04-27.}.
\end{itemize}

\subsection{Conserved Exponential Entropy}
\label{sec:cons_exp_entropy}

Consider the system
\begin{equation}
\label{eq:cons_exp_entropy}
  \od{}{t} \begin{pmatrix} u_1(t) \\ u_2(t) \end{pmatrix}
  =
  \begin{pmatrix} -\exp(u_2(t)) \\ \exp(u_1(t)) \end{pmatrix},
  \quad
  u^0
  =
  \begin{pmatrix} 1 \\ 0.5 \end{pmatrix},
\end{equation}
with exponential entropy
\begin{equation}
\label{eq:exp-entropy-2}
  \eta(u)
  =
  \exp(u_1) + \exp(u_2),
  \quad
  \eta'(u)
  =
  \begin{pmatrix}
    \exp(u_1) \\
    \exp(u_2)
  \end{pmatrix},
\end{equation}
which is conserved for the analytical solution
\begin{equation}
  u(t)
  =
  \biggl(
    \log\biggl( \frac{\e^{(\sqrt{\e} + \e) t} (\sqrt{\e} + \e)}
                     {\sqrt{\e} + \e^{(\sqrt{\e} + \e) t}} \biggr),\,
    \log\Bigl( \e + \e^{3/2} \Bigr) - \log\Bigl( \sqrt{\e} + \e^{(\sqrt{\e} + \e) t} \Bigr)
  \biggr)^T.
\end{equation}

The shape of $r(\gamma)$ for the first time step using SSPRK(3,3) is shown
in Figure~\ref{fig:cons_exp_entropy__r_over_gamma}. In accordance with the
description given in Remark~\ref{rem:shape-of-r-2}, $r(0) = 0$, $r(1) \approx 0$,
$r$ is negative between its roots and positive outside of this interval.
The order of accuracy $r(1) = \O(\dt^{p+1})$ guaranteed by Corollary~\ref{cor:accuracy}
is obtained for the methods shown in Figure~\ref{fig:cons_exp_entropy__r_at_one}.

\begin{figure}
\centering
  \begin{subfigure}[b]{0.4\textwidth}
    \centering
    \includegraphics[width=\textwidth]{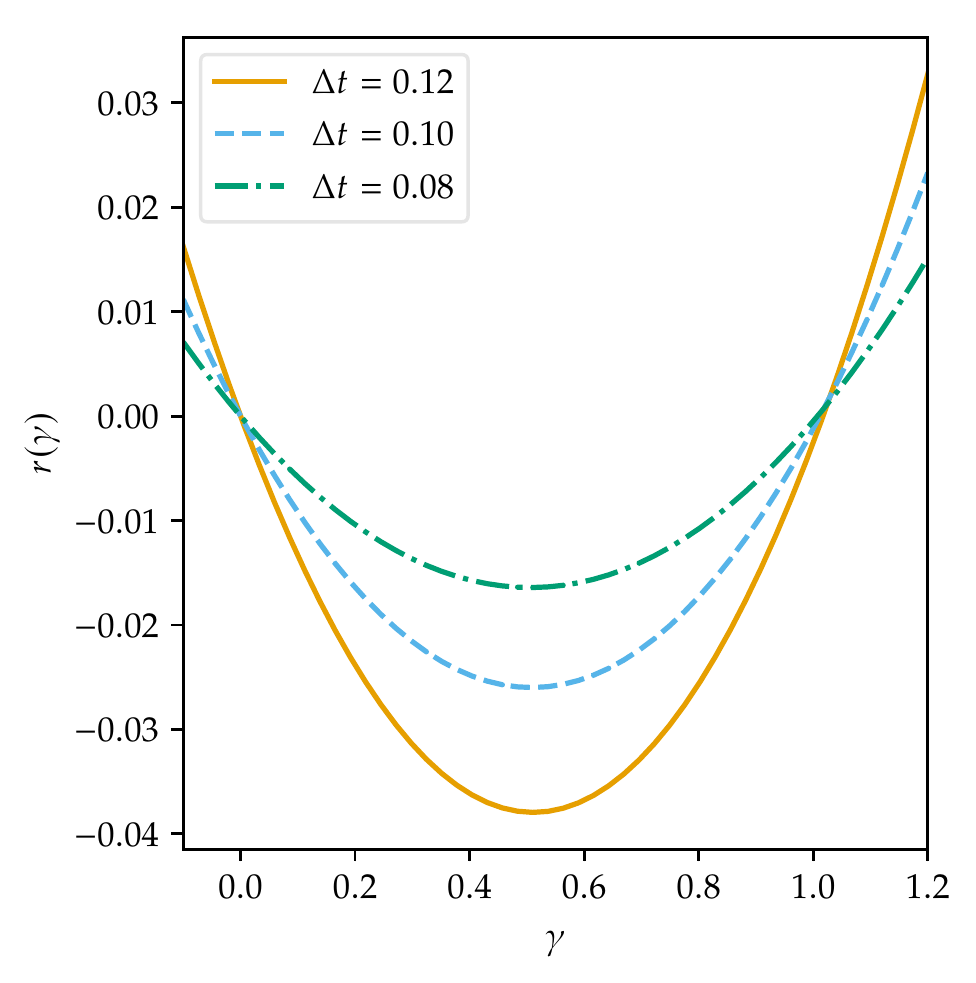}
    \caption{$r(\gamma)$ for SSPRK(3,3).}
    \label{fig:cons_exp_entropy__r_over_gamma}
  \end{subfigure}%
  \begin{subfigure}[b]{0.6\textwidth}
    \centering
    \includegraphics[width=\textwidth]{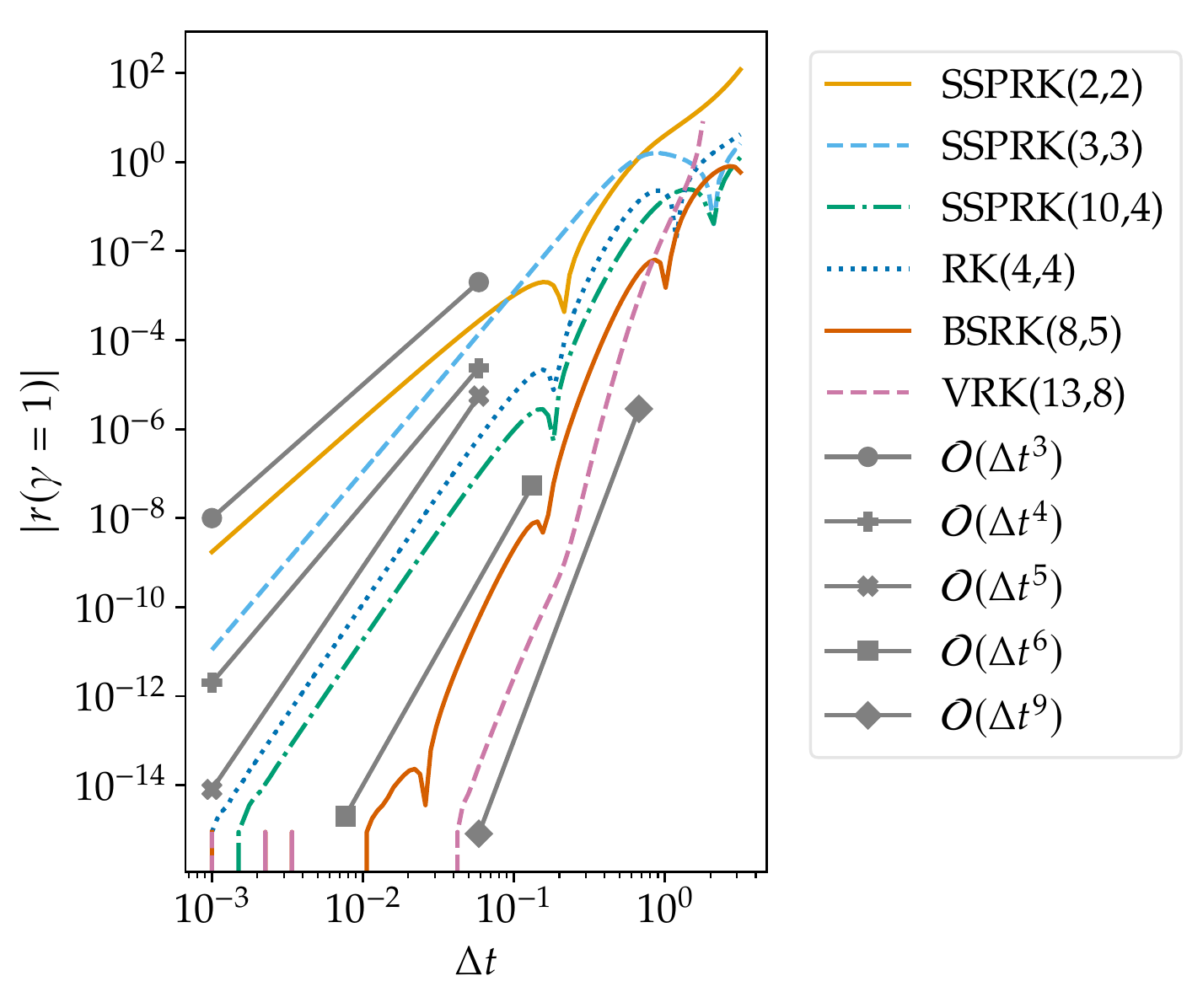}
    \caption{$r(\gamma=1)$ for some Runge--Kutta methods.}
    \label{fig:cons_exp_entropy__r_at_one}
  \end{subfigure}%
  \caption{Numerical results for the temporal entropy production $r$ \eqref{eq:r}
           at the first time step for the entropy conservative ODE
           \eqref{eq:cons_exp_entropy}.}
\end{figure}

Results of a convergence study in this setup are shown in
Figure~\ref{fig:cons_exp_entropy__convergence}. The unmodified and relaxation schemes
($u^{n+1}_\gamma \approx u(t^{n} + \gamma_n \dt)$) converge with the
expected order of accuracy $p$, in accordance with Theorem~\ref{thm:accuracy}.
The IDT methods ($u^{n+1}_\gamma \approx u(t^{n} + \dt)$) yield a
reduced order of convergence according to Theorem~\ref{thm:accuracy}. Moreover,
they are far more sensitive to variations of the nonlinear solvers (algorithms,
tolerances, and other related parameters) and show
serious convergence issues for small time steps in this case, as can be seen in
Figure~\ref{fig:cons_exp_entropy__convergence_relaxed_olddt}. Hence, the relaxation
schemes are far superior in this case.

\begin{figure}
\centering
  \begin{subfigure}[b]{\textwidth}
    \centering
    \includegraphics[width=\textwidth]{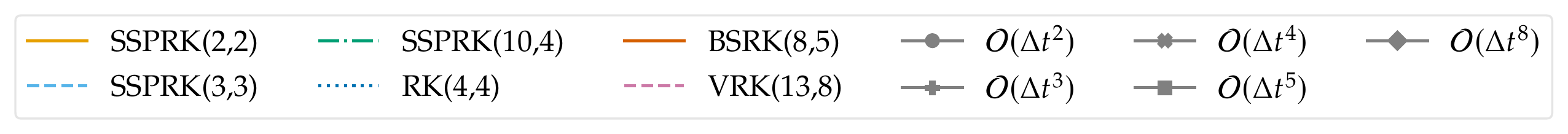}
  \end{subfigure}%
  \\
  \begin{subfigure}[b]{0.33\textwidth}
    \centering
    \includegraphics[width=\textwidth]{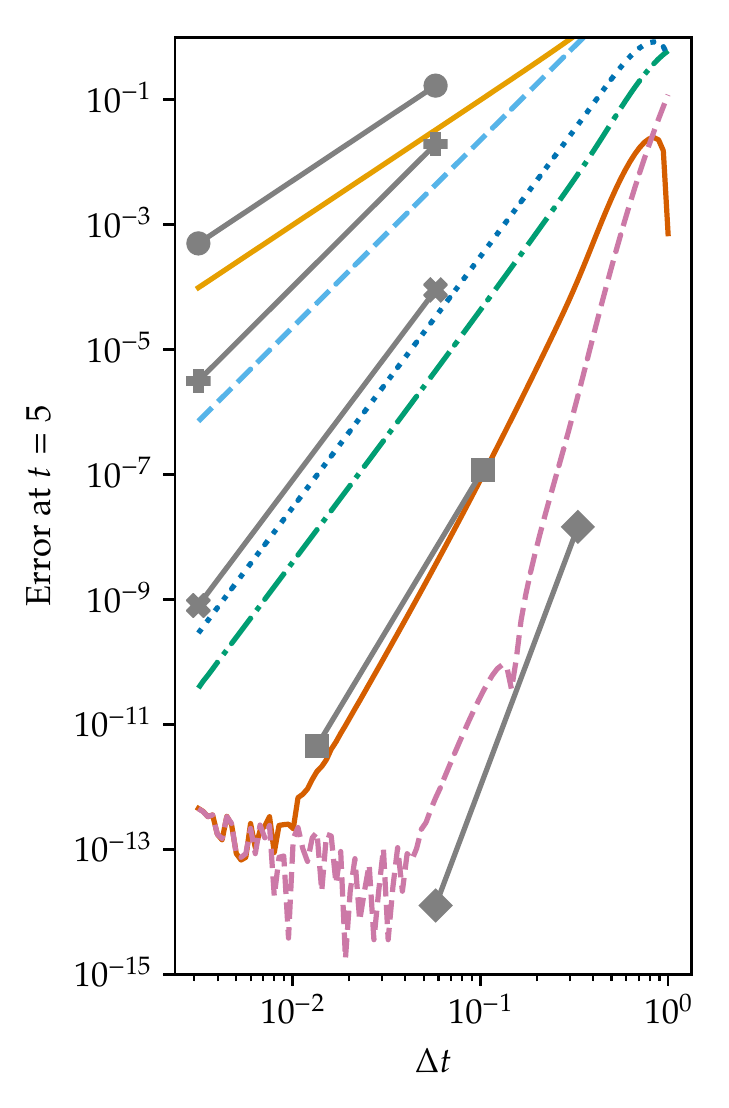}
    \caption{Unmodified methods.}
    \label{fig:cons_exp_entropy__convergence_standard}
  \end{subfigure}%
  \begin{subfigure}[b]{0.33\textwidth}
    \centering
    \includegraphics[width=\textwidth]{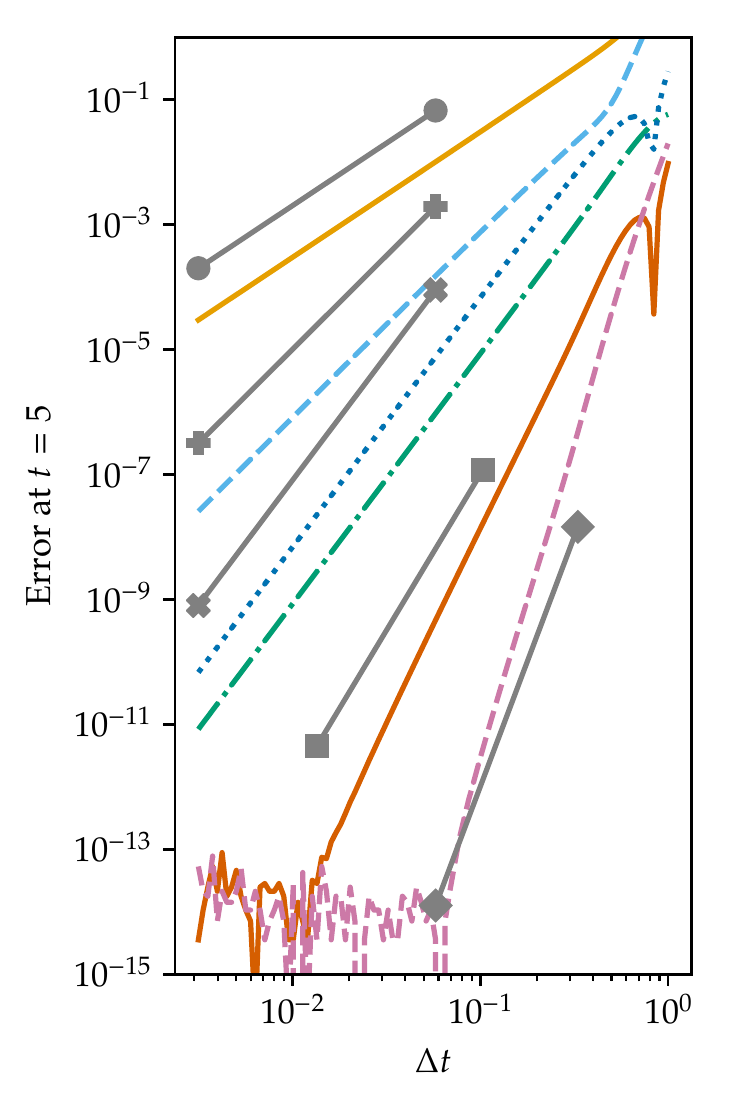}
    \caption{Relaxation methods.}
    \label{fig:cons_exp_entropy__convergence_relaxed}
  \end{subfigure}%
  \begin{subfigure}[b]{0.33\textwidth}
    \centering
    \includegraphics[width=\textwidth]{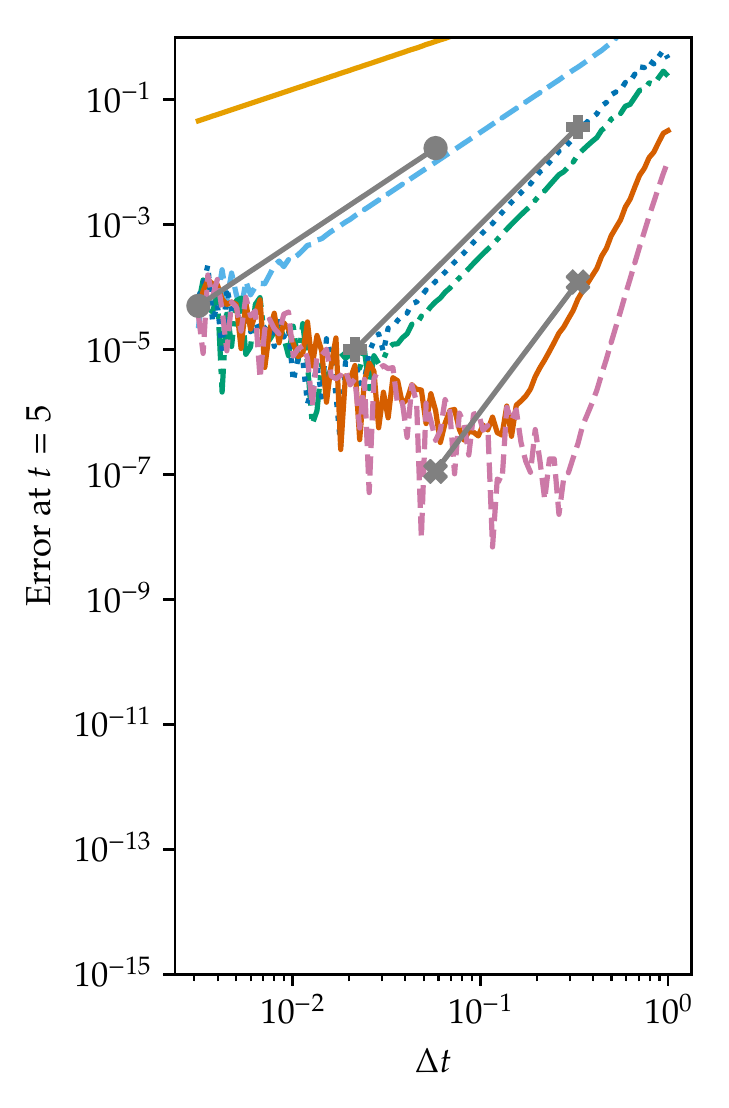}
    \caption{IDT methods.}
    \label{fig:cons_exp_entropy__convergence_relaxed_olddt}
  \end{subfigure}%
  \caption{Convergence study for the entropy conservative ODE \eqref{eq:cons_exp_entropy}
           with unmodified methods, RRK schemes ($u^{n+1}_\gamma
           \approx u(t^{n} + \gamma_n \dt)$), and IDT methods ($u^{n+1}_\gamma
           \approx u(t^{n} + \dt)$).}
  \label{fig:cons_exp_entropy__convergence}
\end{figure}

\subsection{Dissipated Exponential Entropy}
\label{sec:diss_exp_entropy}

Consider the ODE
\begin{equation}
\label{eq:diss_exp_entropy}
  \od{}{t} u(t)
  =
  -\exp(u(t)),
  \quad
  u^0
  =
  0.5,
\end{equation}
with exponential entropy $\eta(u) = \exp(u)$,
which is dissipated for the analytical solution
\begin{equation}
  u(t)
  =
  -\log\Bigl( \e^{-1/2} + t \Bigr).
\end{equation}

The shape of $r$ and the convergence behavior of $r(1) \to 0$ as $\dt \to 0$ are
very similar to the ones of Section~\ref{sec:cons_exp_entropy} and are therefore
not shown in detail. However, the dissipative system \eqref{eq:diss_exp_entropy}
results in a better convergence behavior of the modified schemes: They
depend less on the nonlinear solvers and there are less problems
for small $\dt$. Nevertheless, the order of convergence using the RRK schemes
is still better than for the IDT methods, as explained by Theorem~\ref{thm:accuracy}.

\begin{figure}
\centering
  \begin{subfigure}[b]{\textwidth}
    \centering
    \includegraphics[width=\textwidth]{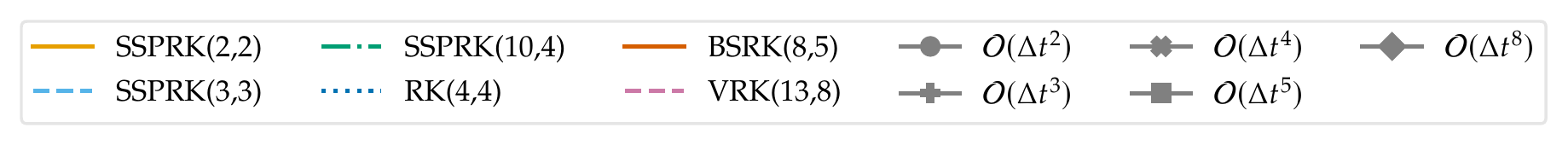}
  \end{subfigure}%
  \\
  \begin{subfigure}[b]{0.33\textwidth}
    \centering
    \includegraphics[width=\textwidth]{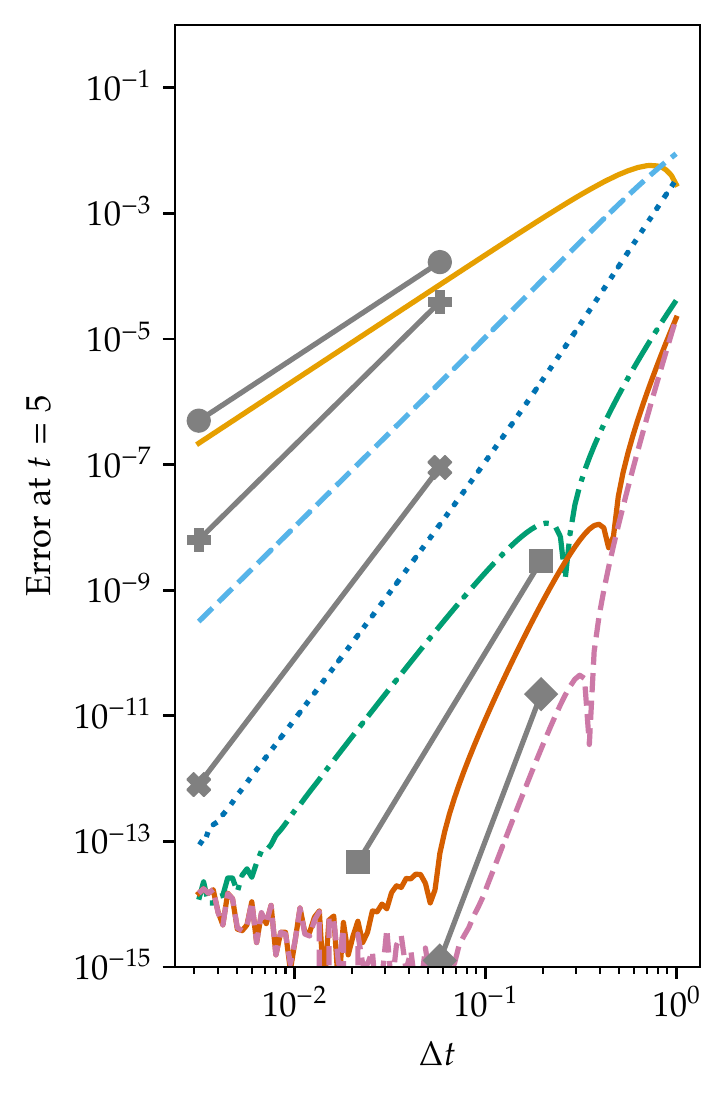}
    \caption{Unmodified methods.}
    \label{fig:diss_exp_entropy__convergence_standard}
  \end{subfigure}%
  \begin{subfigure}[b]{0.33\textwidth}
    \centering
    \includegraphics[width=\textwidth]{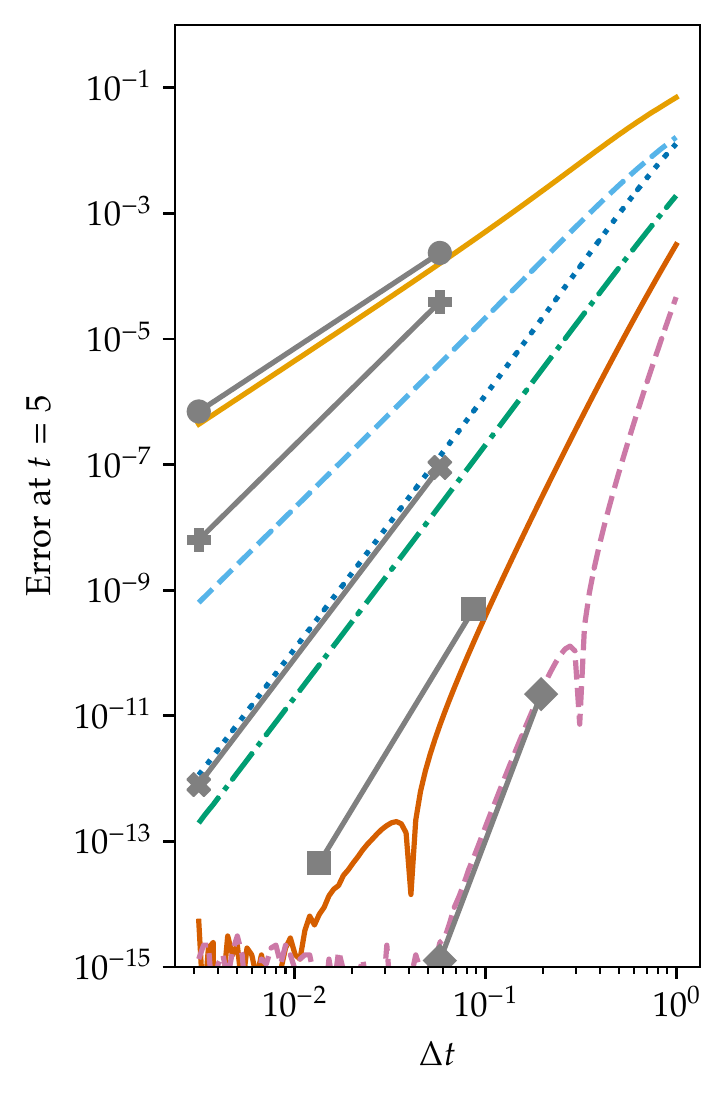}
    \caption{Relaxation methods.}
    \label{fig:diss_exp_entropy__convergence_relaxed}
  \end{subfigure}%
  \begin{subfigure}[b]{0.33\textwidth}
    \centering
    \includegraphics[width=\textwidth]{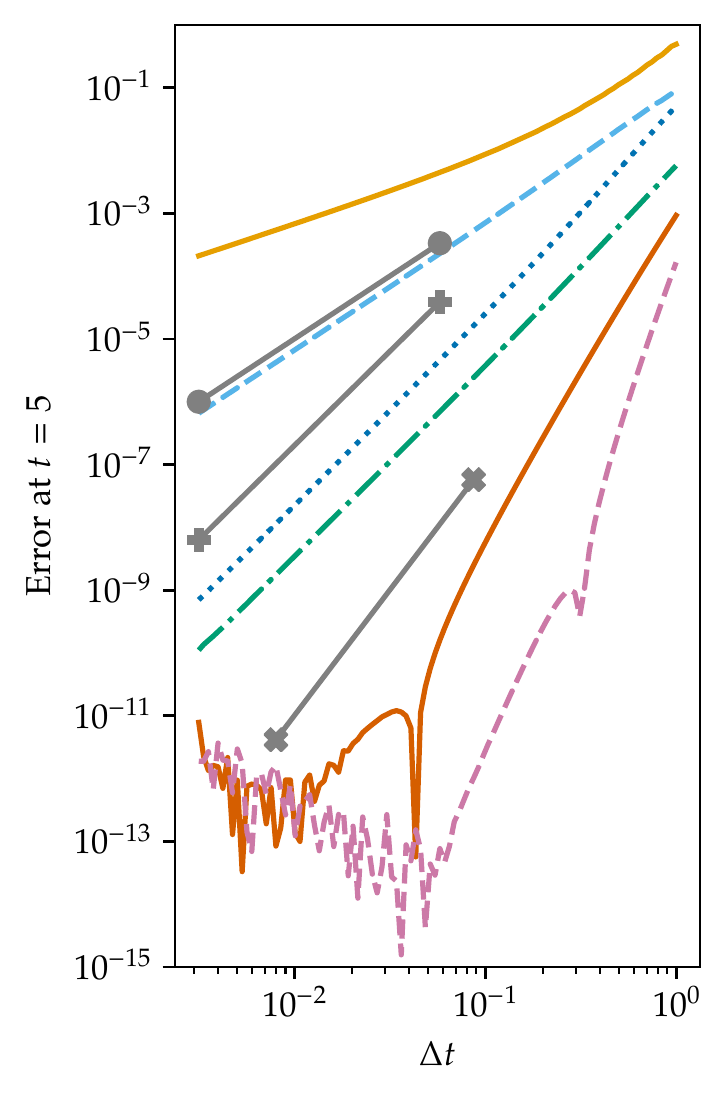}
    \caption{IDT methods.}
    \label{fig:diss_exp_entropy__convergence_relaxed_olddt}
  \end{subfigure}%
  \caption{Convergence study for the entropy dissipative ODE \eqref{eq:diss_exp_entropy}
           with unmodified methods, RRK schemes ($u^{n+1}_\gamma
           \approx u(t^{n} + \gamma_n \dt)$), and IDT methods ($u^{n+1}_\gamma
           \approx u(t^{n} + \dt)$).}
  \label{fig:diss_exp_entropy__convergence}
\end{figure}

\subsection{Nonlinear Pendulum}
\label{sec:pendulum}

Consider the system
\begin{equation}
\label{eq:pendulum}
  \od{}{t} \begin{pmatrix} u_1(t) \\ u_2(t) \end{pmatrix}
  =
  \begin{pmatrix} -\sin(u_2(t)) \\ u_1(t) \end{pmatrix},
  \quad
  u^0
  =
  \begin{pmatrix} 1.5 \\ 1 \end{pmatrix},
\end{equation}
with non-quadratic energy
\begin{equation}
\label{eq:pendulum_energy}
  \eta(u)
  =
  \frac{1}{2} u_1^2 - \cos(u_2),
  \quad
  \eta'(u)
  =
  \begin{pmatrix}
    u_1 \\
    \sin(u_2)
  \end{pmatrix},
\end{equation}
which is conserved for all $u$. Further, this entropy is convex for
all $u_1$ and $\abs{u_2} < \nicefrac{\pi}{2}$. Note that the border
of the convex region is crossed for this initial condition, since
$\abs{u_2}$ becomes larger than $\nicefrac{\pi}{2}$.

\begin{figure}
\centering
  \begin{subfigure}[b]{\textwidth}
    \centering
    \includegraphics[width=\textwidth]{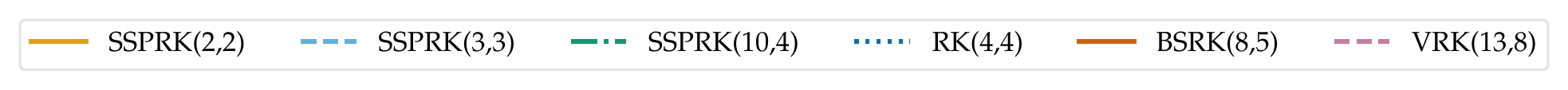}
  \end{subfigure}%
  \\
  \begin{subfigure}[b]{0.33\textwidth}
    \centering
    \includegraphics[width=\textwidth]{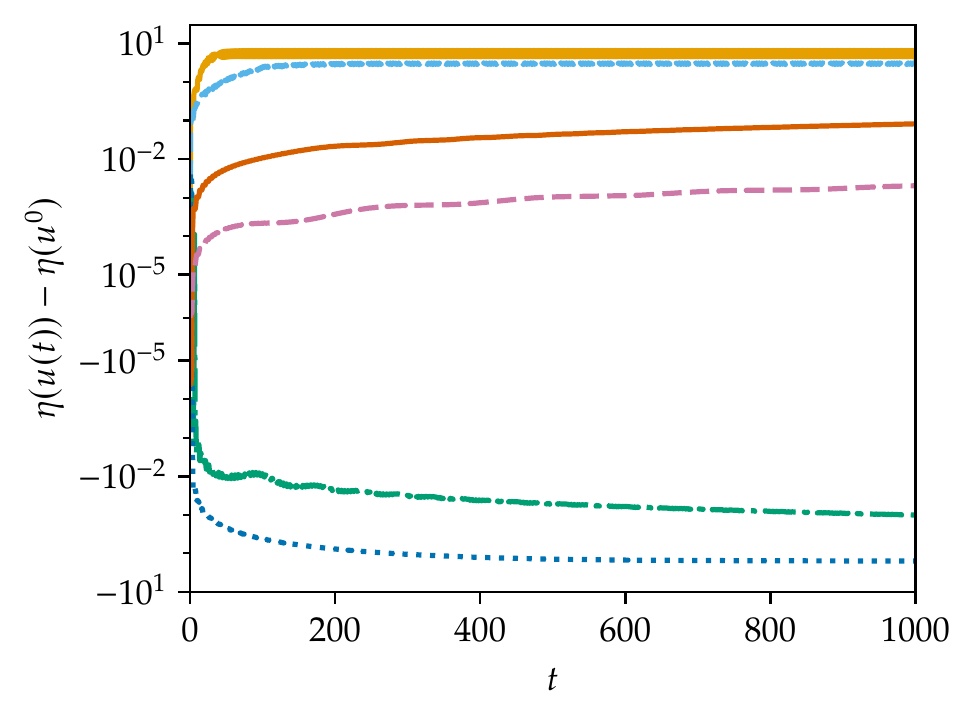}
    \caption{Unmodified methods.}
    \label{fig:pendulum__energy_standard}
  \end{subfigure}%
  \begin{subfigure}[b]{0.33\textwidth}
    \centering
    \includegraphics[width=\textwidth]{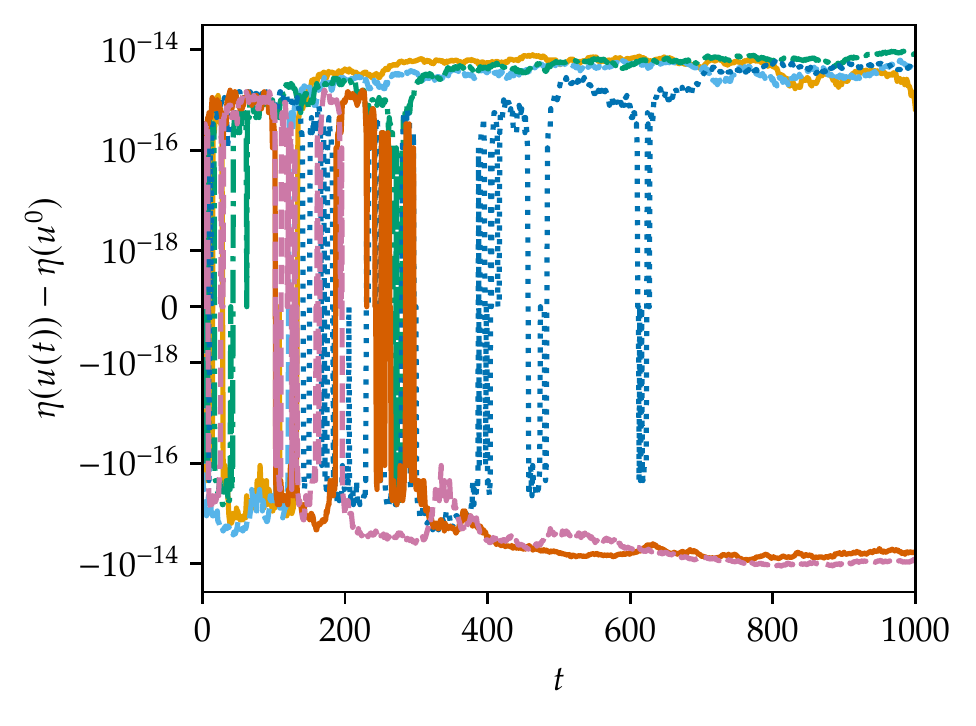}
    \caption{Relaxation methods.}
    \label{fig:pendulum__energy_relaxed}
  \end{subfigure}%
  \begin{subfigure}[b]{0.33\textwidth}
    \centering
    \includegraphics[width=\textwidth]{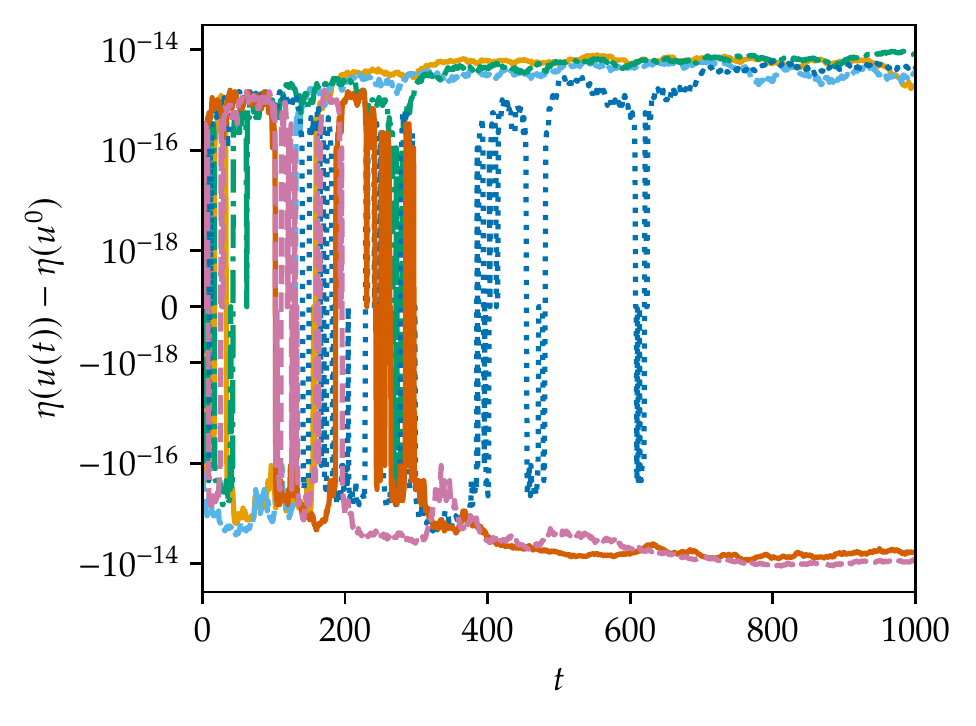}
    \caption{IDT methods.}
    \label{fig:pendulum__energy_relaxed_olddt}
  \end{subfigure}%
  \caption{Evolution of the non-quadratic energy \eqref{eq:pendulum_energy} of
           numerical solutions for the nonlinear pendulum \eqref{eq:pendulum}.}
  \label{fig:pendulum__energy}
\end{figure}

The energy of numerical solutions of \eqref{eq:pendulum} with $\dt = 0.9$ is
shown in Figure~\ref{fig:pendulum__energy}. As can be seen there, the energy
deviates significantly for all unmodified schemes while it is conserved to
machine accuracy for the RRK and IDT methods, as expected.

Typical results for this problem are shown in Figure~\ref{fig:pendulum__phase}.
Explicit methods tend to either create energy and drift away from the origin
such as SSPRK(3,3) or to dissipate energy and drift towards the origin such
as RK(4,4). In contrast, the corresponding relaxation schemes stay on the solution
manifold with constant energy and show qualitatively correct long time behavior.

\begin{figure}
\centering
  \begin{subfigure}[b]{0.8\textwidth}
    \centering
    \includegraphics[width=\textwidth]{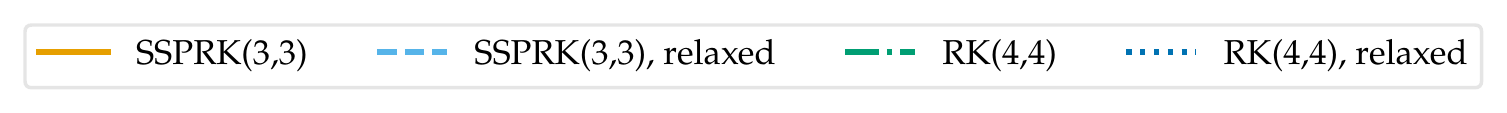}
  \end{subfigure}%
  \\
  \begin{subfigure}[b]{0.5\textwidth}
    \centering
    \includegraphics[width=\textwidth]{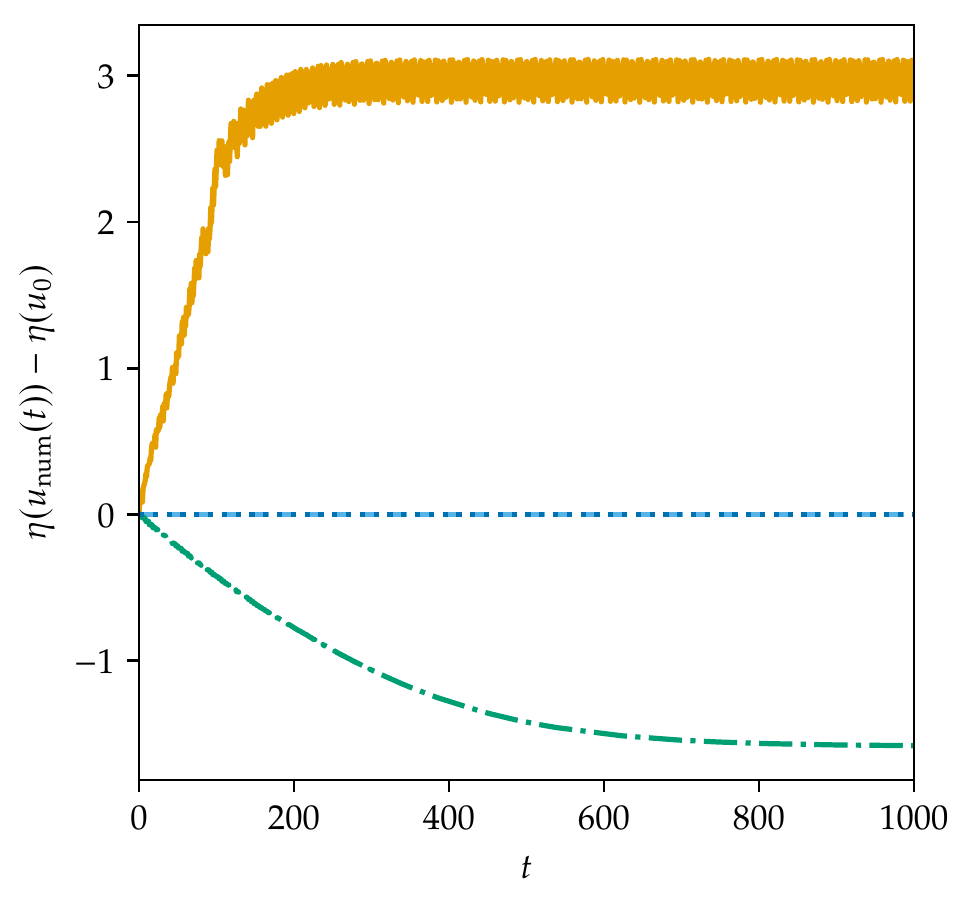}
    \caption{Energy over time.}
    \label{fig:pendulum__phase_energy}
  \end{subfigure}%
  \begin{subfigure}[b]{0.5\textwidth}
    \centering
    \includegraphics[width=\textwidth]{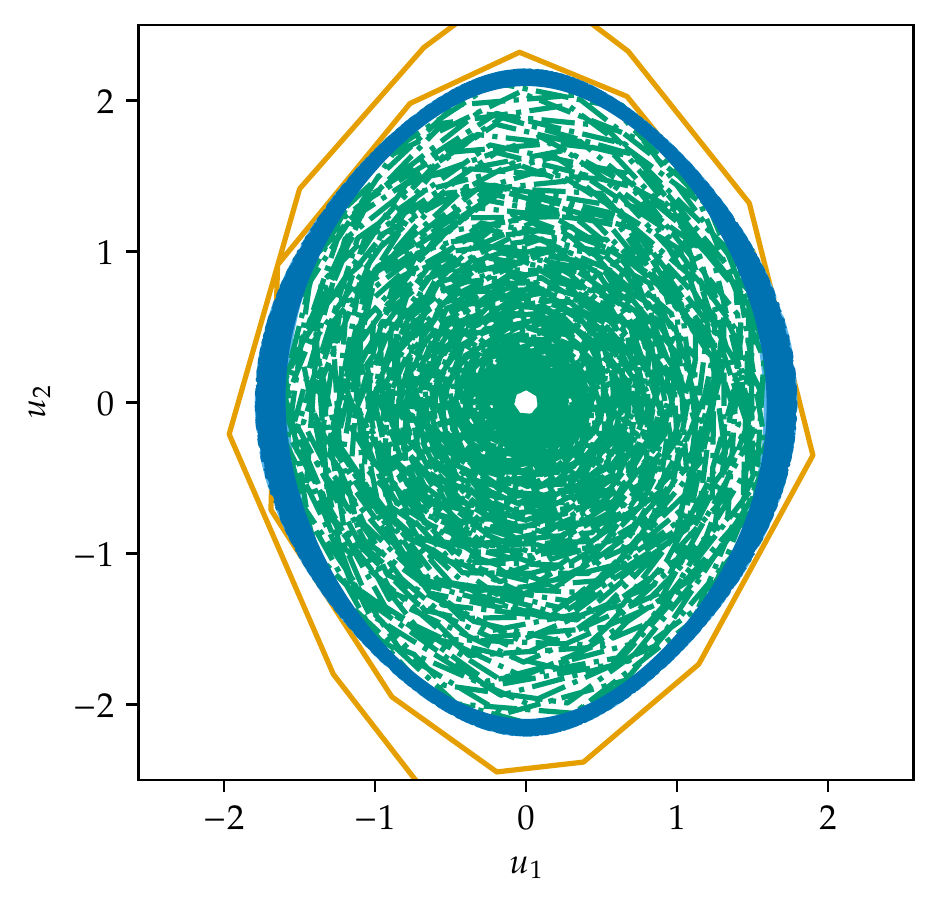}
    \caption{Phase space.}
    \label{fig:pendulum__phase_plot}
  \end{subfigure}%
  \caption{Numerical solutions for the nonlinear pendulum \eqref{eq:pendulum}
           using the unmodified and relaxation versions of SSPRK(3,3) and RK(4,4)
           with $\dt = 0.9$.}
  \label{fig:pendulum__phase}
\end{figure}

\subsection{Other Equations}
\label{sec:other-eqs}

Other systems such as the Lotka--Volterra equations with convex Lyapunov function,
the harmonic oscillator with quartic entropy $\eta(u) = \norm{u}^4$, and Burgers'
equation with a logarithmic entropy have also been tested. The results are
qualitatively similar to those presented above and can be found in the accompanying
repository \cite{ranocha2019relaxationRepository}.

\section{The Compressible Euler and Navier--Stokes Equations}\label{subsec:conservation_laws}
In this section, we apply the relaxation time integration schemes to the compressible
Euler and Navier--Stokes equations, which can be written as
\begin{equation}\label{eq:compressible_ns}
\begin{aligned}
&\frac{\partial\Q}{\partial t}+\sum\limits_{m=1}^{3}\frac{\partial \Fxm}{\partial \xm} =\sum\limits_{m=1}^{3}\frac{\partial\Fxmv}{\partial\xm}, &&
\forall \left(\xone,\xtwo,\xthree\right)\in\Omega,\quad t\ge 0,\\
&\Q\left(\xone,\xtwo,\xthree,t\right)=\GB\left(\xone,\xtwo,\xthree,t\right),&& \forall \left(\xone,\xtwo,\xthree\right)\in\Gamma,\quad t\ge 0,\\
&\Q\left(\xone,\xtwo,\xthree,0\right)=\Gzero\left(\xone,\xtwo,\xthree,0\right), &&
\forall \left(\xone,\xtwo,\xthree\right)\in\Omega.
\end{aligned}
\end{equation}
The vectors $\Q$, $\Fxm$, and $\Fxmv$ respectively denote the conserved variables,
the inviscid ($I$) fluxes, and the viscous ($V$) fluxes. The boundary data,
$\GB$, and the initial condition, $\Gzero$, are assumed to be in $L^{2}(\Omega)$,
with the further assumption that $\GB$ will be set to coincide with linear
well-posed boundary conditions and such that entropy conservation or stability is achieved.
The compressible Euler equations can be obtained from \eqref{eq:compressible_ns}
by setting $\Fxmv=0$.

It is well known that the compressible Navier--Stokes equations \eqref{eq:compressible_ns} possess
a convex extension that, when integrated over the physical domain $\Omega$,
only depends on the boundary data on $\Gamma$. Such an extension yields the entropy function
\begin{equation}\label{eq:entropy_function}
S=-\rho s,
\end{equation}
where $\rho$ and $s$ are the density and the thermodynamic entropy, respectively.
The entropy function, $S$, is convex with $S'' > 0$ if the thermodynamic variables
are positive and is a useful tool for proving stability in the $L^{2}$
norm \cite{dafermos_book_2010,tadmor_2006_ns}.

Following the analysis described in \cite{carpenter_ssdc_2014,parsani_entropy_stability_solid_wall_2015,carpenter_entropy_stability_ssdc_2016,fernandez_pref_euler_entropy_stability_2019},
we multiply multiply the PDE \eqref{eq:compressible_ns} by the (local) entropy
variables $\W = \partial S / \partial \Q$ and arrive at the the
integral form of the (scalar) entropy equation
\begin{equation}
 \label{eq:continuous_entropy_estimate_discont}
  \begin{split}
  \frac{\mr{d}}{\mr{d}t}\int_{\Omega}S\mr{d}\Omega = \frac{\mr{d}}{\mr{d}t}\eta
  &\le \sum\limits_{m=1}^{3}\int_{\Gamma}\left(\W\Tr \Fxmv - \fnc{F}_{\xm} \right)\nxm\mr{d}\Gamma - DT,
\end{split}
\end{equation}
where $\nxm$ is the $m$-th component of the outward facing unit normal to $\Gamma$ and
\begin{equation}\label{eq:DT}
DT = \sum_{m,j=1}^3 \int_{\Omega} \left(\frac{\partial \W}{\partial x_m}\right)^{\top} \Cij{m}{j} \,
  \frac{\partial \W}{\partial x_j} \mr{d}\Omega.
\end{equation}

We remark that viscous dissipation always introduces a negative rate of change in entropy,
since the $-DT$ term in \eqref{eq:continuous_entropy_estimate_discont} is negative semi-definite.
An increase in entropy within the domain can only result from
data that convects or diffuses through the boundary $\Gamma$.
For smooth flows, we note that the inequality sign in \eqref{eq:continuous_entropy_estimate_discont} becomes an equality.
Finally, we highlight that the integral form of the entropy equation for the compressible Euler equations can be obtained
from \eqref{eq:continuous_entropy_estimate_discont} by removing all the viscous terms.

Since our focus in the present work is on new time discretizations, we give
only a brief explanation of the spatial discretization.
We partition the physical domain $\Omega$ with boundary $\Gamma$
into non-overlapping hexahedral elements and we discretize the spatial terms
using a multi-dimensional SBP
simultaneous-approximation-terms (SBP-SAT) operator as described in
\cite{carpenter_ssdc_2014,parsani_entropy_stability_solid_wall_2015,fernandez_pref_euler_entropy_stability_2019,fernandez_pref_ns_entropy_stability_2019},
where the interested reader can find the details of the spatial discretization.

Using an SBP operator and its equivalent telescoping form
and following closely the entropy stability analysis presented in \cite{carpenter_ssdc_2014,parsani_entropy_stability_solid_wall_2015,carpenter_entropy_stability_ssdc_2016}, the total entropy of the spatial
discretization satisfies
\begin{equation}\label{eq:estimate-no-slip-bc-2}
  \od{}{t} \mathbf{1}^{\top} \widehat{\Pmatvol} \, \bm{S} = \od{}{t} \eta
	= \mathbf{BT} - \mathbf{DT} + \mathbf{\Upsilon}.
\end{equation}
This equation mimics at the semi-discrete level each term in \eqref{eq:continuous_entropy_estimate_discont}.
Here $\mathbf{BT}$ is the discrete
boundary term (i.e., the discrete version of the first integral term on the right-hand side of \eqref{eq:continuous_entropy_estimate_discont}),
$\mathbf{DT}$ is
the discrete dissipation term (i.e., the discrete version of the second term on the right-hand side of \eqref{eq:continuous_entropy_estimate_discont})
and $\mathbf{\Upsilon}$ enforces interface coupling and boundary conditions \cite{carpenter_ssdc_2014,parsani_entropy_stability_solid_wall_2015,carpenter_entropy_stability_ssdc_2016}. For completeness, we note that the matrix
$\widehat{\Pmatvol}$ may be thought of as the mass matrix in the context of the discontinuous Galerkin
finite element method.

In the next part of this section, six test cases will be considered. The first one is the propagation
of an isentropic vortex for the compressible Euler equations. This test case is used to i) perform
a convergence study of the combined space and time discretizations for the compressible Euler equations and
ii) verify the entropy conservative properties of the full discretization. The second test case
is the propagation of a viscous shock and is used to assess the accuracy properties
of the complete entropy stable discretization for the compressible Navier--Stokes
equations. The third and fourth test cases are the Sod's shock tube and the
sine-shock interaction of Titarev and Toro \cite{titarev_2014} which is
the extension of the Shu--Osher problem with much more severe oscillations. These
two test cases are used to show the robustness of the fully-discrete entropy stable
algorithm for non-smooth solutions \cite{carpenter_ssdc_2014,Parsani2016}.
The fifth test case is the
laminar flow in a lid-driven cavity where a non-zero heat entropy flux is imposed on one of the vertical
faces of the cavity. This test case is used to show the capabilities of the full
discretization to capture correctly the time evolution
of the entropy when, for instance, non-homogeneous boundary conditions are imposed.
Finally, the supersonic turbulent flow past a rod of square section \cite{parsani_entropy_stability_solid_wall_2015}
is used to demonstrate algorithmic robustness for the compressible Navier--Stokes
equations.

The error is computed using the following norms:
\begin{equation}
\begin{aligned}
  \text{Discrete }L^{1}&:& \|\bm{q}\|_{L^{1}}&=\Omega_{c}^{-1}\sum\limits_{j=1}^{N_{el}}\onek\Tr\Mk\matJk\textrm{abs}\left(\qk\right),\\
  \text{Discrete }L^{2}&:& \|\bm{q}\|_{L^{2}}^2&=\Omega_{c}^{-1}\sum\limits_{j=1}^{N_{el}}\qk\Tr\Mk\matJk\qk,\\
  \text{Discrete }L^{\infty}&:& \|\bm{q}\|_{L^{\infty}}&=\max\limits_{j=1\dots N_{el}}\textrm{abs}\left(\qk\right).
\end{aligned}
\end{equation}
Here $\matJk$ is the metric Jacobian of the curvilinear transformation from physical
space to computational space of the $j$-th hexahedral element, $N_{el}$ is the
total number of hexahedral elements in the mesh. Furthermore, $\Omega_{c}$ indicates the volume of $\Omega$ computed as
$\Omega_{c}\equiv\sum\limits_{\kappa=1}^{K}\onek_{\kappa}\Tr\M^{\kappa}\matJk\onek_{\kappa}$,
where $\onek_{\kappa}$ is a vector of ones of the size of the number of nodes on the
$\kappa$-th element.

The unstructured grid solver used herein has been developed at the Extreme
Computing Research Center (ECRC) at KAUST on top of the Portable and Extensible
Toolkit for Scientific computing (PETSc)~\cite{petsc-user-ref}, its mesh
topology abstraction (DMPLEX)~\cite{KnepleyKarpeev09} and scalable ordinary
differential equation (ODE)/differential algebraic equations (DAE) solver
library~\cite{abhyankar2018petsc}. The parameter $\gamma_{n}$ of the relaxation
Runge--Kutta schemes is computed
from Equation \eqref{eq:r} to machine precision using the bisection method
which, for efficiency, is implemented directly in the unstructured grid solver.

\subsection{Propagation of an Isentropic Vortex in Three Dimensions}\label{subsubsec:iv}

\begin{figure}
\begin{center}
   \includegraphics[width=0.5\textwidth]{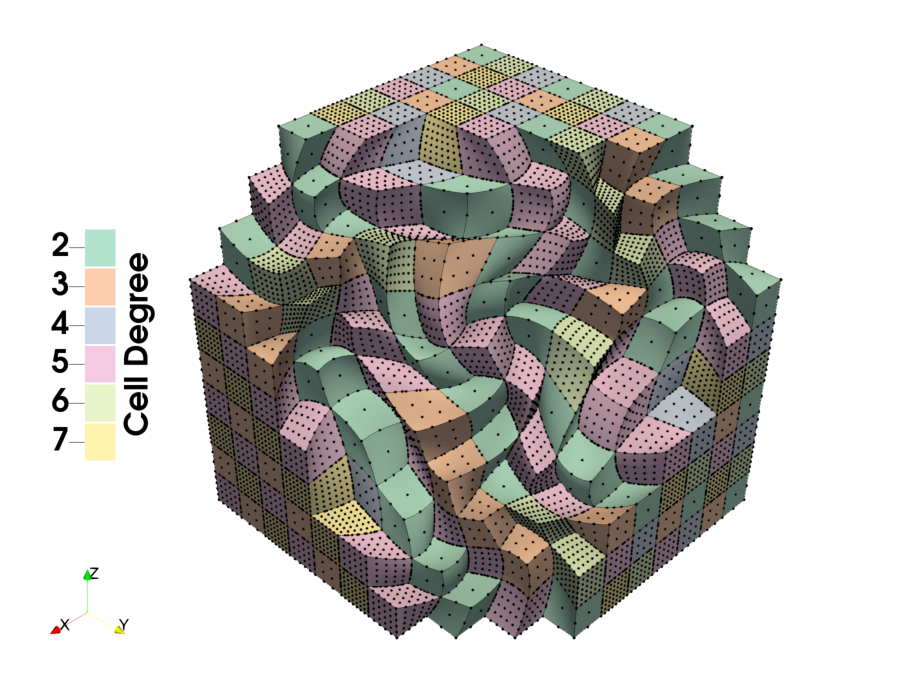}
   \caption{Isentropic vortex: mesh cut and polynomial degree distribution with non-conforming interfaces; $p=2$ to $p=7$.}
   \label{fig:iv_mesh_cut_p_ref}
  \end{center}
\end{figure}

In this section, we investigate the accuracy and the entropy conservation property
of the full discretization obtained by combining SBP-SAT
entropy conservative operators and relaxation time integration
schemes. To do so, we simulate the
propagation of an isentropic vortex by solving the three-dimensional compressible
Euler equations. The analytical solution of this problem is
\begin{equation} \label{vortex-solution}
\begin{split}
& \fnc{G} = 1
-\left\{
\left[
\left(\xone-x_{1,0}\right)
-U_{\infty}\cos\left(\alpha\right)t
\right]^{2}
+
\left[
\left(\xtwo-x_{2,0}\right)
-U_{\infty}\sin\left(\alpha\right)t
\right]^{2}
\right\},\\
&\rho = T^{\frac{1}{\gamma-1}},
 \quad
 T = \left[1-\epsilon_{\nu}^{2}M_{\infty}^{2}\frac{\gamma-1}{8\pi^{2}}\exp\left(\fnc{G}\right)\right],\\
&\Uone = U_{\infty}\cos(\alpha)-\epsilon_{\nu}
\frac{\left(\xtwo-x_{2,0}\right)-U_{\infty}\sin\left(\alpha\right)t}{2\pi}
\exp\left(\frac{\fnc{G}}{2}\right),\\
&\Utwo = U_{\infty}\sin(\alpha)-\epsilon_{\nu}
\frac{\left(\xone-x_{1,0}\right)-U_{\infty}\cos\left(\alpha\right)t}{2\pi}
\exp\left(\frac{\fnc{G}}{2}\right),
\quad \Uthree = 0,
\end{split}
\end{equation}
where $U_{\infty}$ is the modulus of the free-stream velocity, $M_{\infty}$
is the free-stream Mach number, $c_{\infty}$ is the free-stream speed of sound,
and $\left(x_{1,0},x_{2,0},x_{3,0}\right)$ is the vortex center.
The following values are used: $U_{\infty}=M_{\infty} c_{\infty}$, $\epsilon_{\nu}=5$, $M_{\infty}=0.5$,
$\gamma=1.4$, $\alpha=45^{\degree}$, and $\left(x_{1,0},x_{2,0},x_{3,0}\right)=\left(0,0,0\right)$.
The computational domain is
\begin{equation*}
	\xone\in[-5,5], \qquad\xtwo\in[-5,5],\qquad\xthree\in[-5,5],\qquad  t\in[0,10].
\end{equation*}
The initial condition is given by \eqref{vortex-solution} with $t=0$.
Periodic boundary conditions are used on all six faces of the computational domain.
First, we run a convergence study for the complete entropy-stable discretization
by simultaneously refining the grid spacing and the time step and keeping the
ratio $U_{\infty}\Delta t/\Delta x$ constant and equal to $0.05$.
The errors and convergence rates in the $L^1$, $L^2$ and $L^{\infty}$ norms for fourth-, fifth-,
sixth-order accurate algorithms are reported in Table~\ref{tab:conv_iv_p3to5_relax}.
We observe that the computed order of convergence in both $L^1$ and $L^2$ norms matches
the design order of the scheme.

\begin{table}
\centering
  \caption{Convergence study for the isentropic vortex using entropy conservative
  SBP-SAT schemes with different solution polynomial degrees $p$ and relaxation Runge--Kutta
  methods ($U_{\infty}\Delta t/\Delta x = 0.05$, error in the density).}
  \label{tab:conv_iv_p3to5_relax}
  \begin{tabular*}{\linewidth}{@{\extracolsep{\fill}}*8c@{}}
    \toprule
    $p$ & RK Method
    & $L^1$ Error & $L^1$ Rate
    & $L^2$ Error & $L^2$ Rate
    & $L^\infty$ Error & $L^\infty$ Rate
    \\
    \midrule
    3 & RK(4,4) &
      2.66E-03  & ---    & 1.36E-04  & ---    & 2.46E-02     & ---   \\ &&
      2.15E-04  &  3.63  & 1.20E-05  &  3.50  & 3.18E-03     &  2.95 \\ &&
      1.29E-05  &  4.06  & 8.50E-07  &  3.82  & 3.49E-04     &  3.19 \\ &&
      6.60E-07  &  4.29  & 5.21E-08  &  4.03  & 2.66E-05     &  3.71 \\ &&
      3.84E-08  &  4.10  & 2.82E-09  &  4.21  & 2.04E-06     &  3.70 \\
    \midrule
    4 & BSRK(8,5) &
      3.34E-04  & ---   & 4.57E-05  & ---   & 8.50E-03     & ---   \\ &&
      3.08E-05  & 4.76  & 2.05E-06  & 4.48  & 9.06E-04     & 3.23  \\ &&
      7.33E-07  & 5.39  & 5.62E-08  & 5.19  & 5.80E-05     & 3.97  \\ &&
      2.05E-08  & 5.16  & 1.71E-09  & 5.04  & 1.33E-06     & 5.45  \\ &&
      5.70E-10  & 5.17  & 4.76E-11  & 5.17  & 3.38E-08     & 5.30  \\
    \midrule
    5 & VRK(9,6) &
      2.23E-04  & ---   & 1.31E-05  & ---   & 3.39E-03     & --    \\ &&
      3.55E-06  & 5.98  & 2.31E-07  & 5.82  & 8.69E-05     & 5.29  \\ &&
      6.74E-08  & 5.72  & 4.87E-09  & 5.57  & 3.25E-06     & 4.74  \\ &&
      1.10E-09  & 5.93  & 6.81E-11  & 6.16  & 7.48E-08     & 5.44  \\ &&
      1.70E-11  & 6.02  & 9.57E-13  & 6.15  & 1.64E-09     & 5.51  \\
    \bottomrule
  \end{tabular*}
\end{table}

Next, we validate the full entropy-conservative property by simulating
the propagation of the isentropic vortex using a grid with ten hexahedra in each
coordinate direction and
non-conforming interfaces (see Figure \ref{fig:iv_mesh_cut_p_ref}). The grid is generated by setting
the solution polynomial degree in each element to a random integer chosen uniformly from the set $\{2,3,4,5\}$
\cite{fernandez_pref_euler_entropy_stability_2019}.\footnote{This corresponds to
SBP-SAT operators which are formally third to sixth order accurate.}
All the dissipation terms used for the interface coupling \cite{parsani_entropy_stable_interfaces_2015,fernandez_pref_euler_entropy_stability_2019} are
turned off, including upwind and interior-penalty SATs.
To highlight that the space and time discretizations and their coupling are
truly entropy conservative, we compute in quadruple precision.

In addition to the Runge--Kutta methods mentioned at the beginning of
Section~\ref{sec:numerical}, we use the following methods, that also have weights
$b_i\geq 0$. Again, the value of $\Delta t$ is fixed in each test, and embedded error
estimators are not used.
\begin{itemize}
  \item
  LSCKRK(5,4): Five stage, fourth order method of \cite{CK1994}.

  \item
  BSRK(3,3): Three stage, third order method of \cite{bogacki1989third}.

  \item
  BSRK(7,5): Seven stage, fifth order method of \cite{bogacki1996efficient}.

  \item
  VRK(10,7): Ten stage, seventh order method of the family developed
  in \cite{verner1978explicit}\footnote{The coefficients are taken from \url{http://people.math.sfu.ca/~jverner/RKV76.IIa.Robust.000027015646.081206.CoeffsOnlyFLOAT}
  at 2019-05-02.}.
\end{itemize}

\begin{figure}
  \begin{center}
  \includegraphics[width=0.65\textwidth]{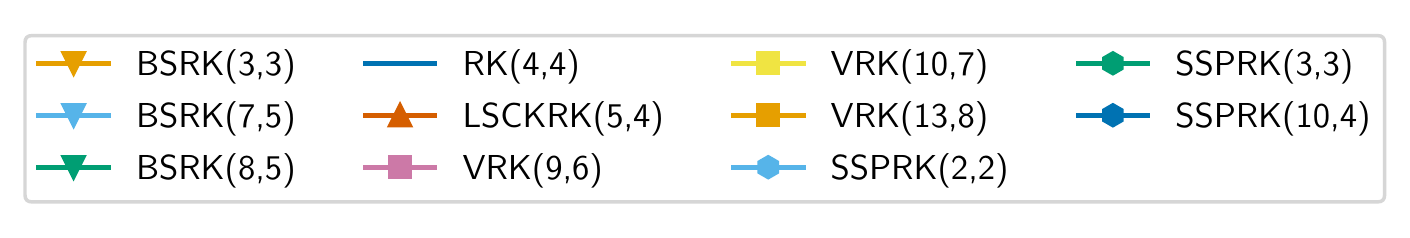}\\
  \begin{subfigure}[b]{.45\textwidth}
    \includegraphics[width=\textwidth]{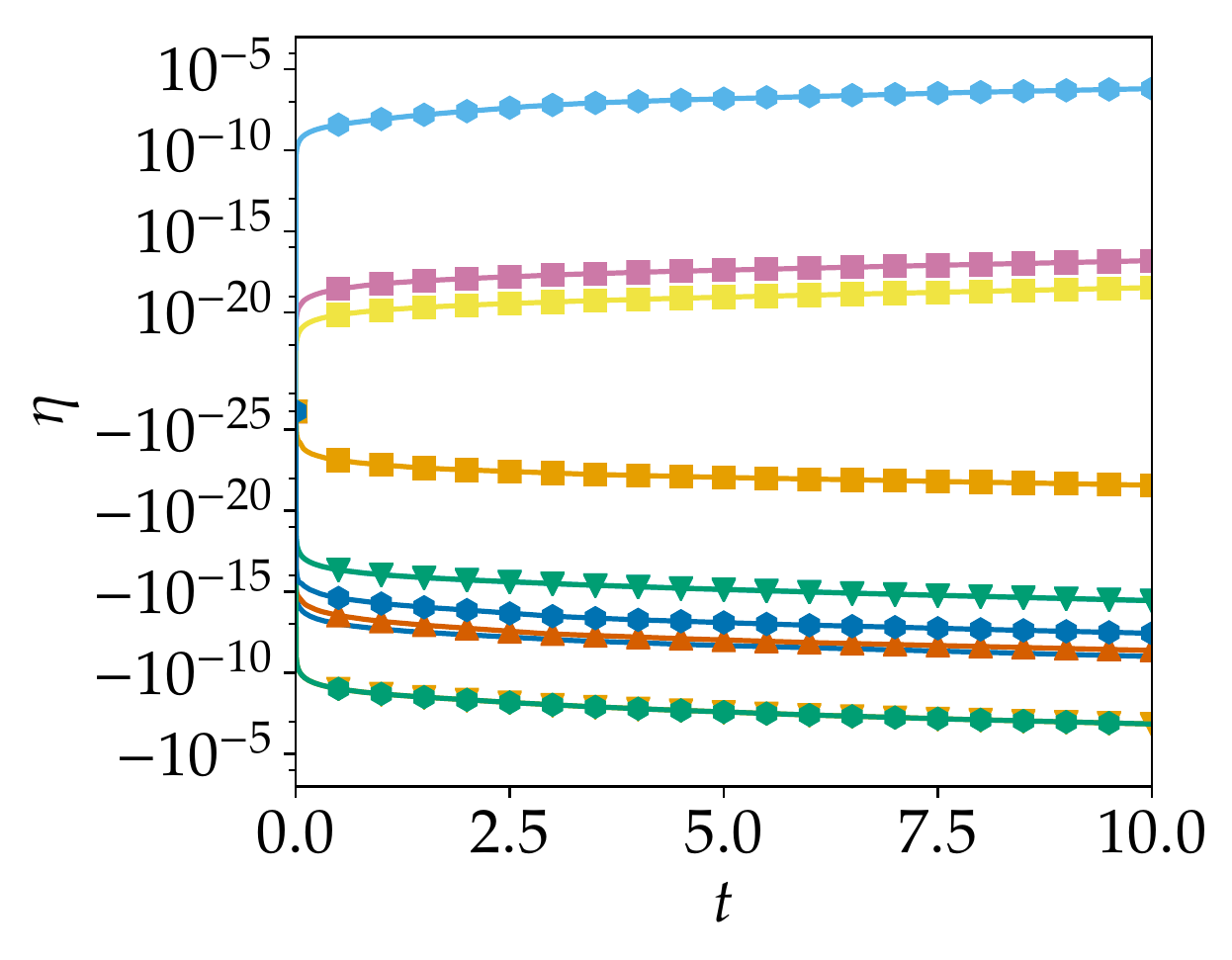}
    \caption{Without relaxation.}
    \label{fig:iv_no_relaxation_non-conf_p_ref}
   \end{subfigure}
  \begin{subfigure}[b]{.435\textwidth}
   \includegraphics[width=\textwidth]{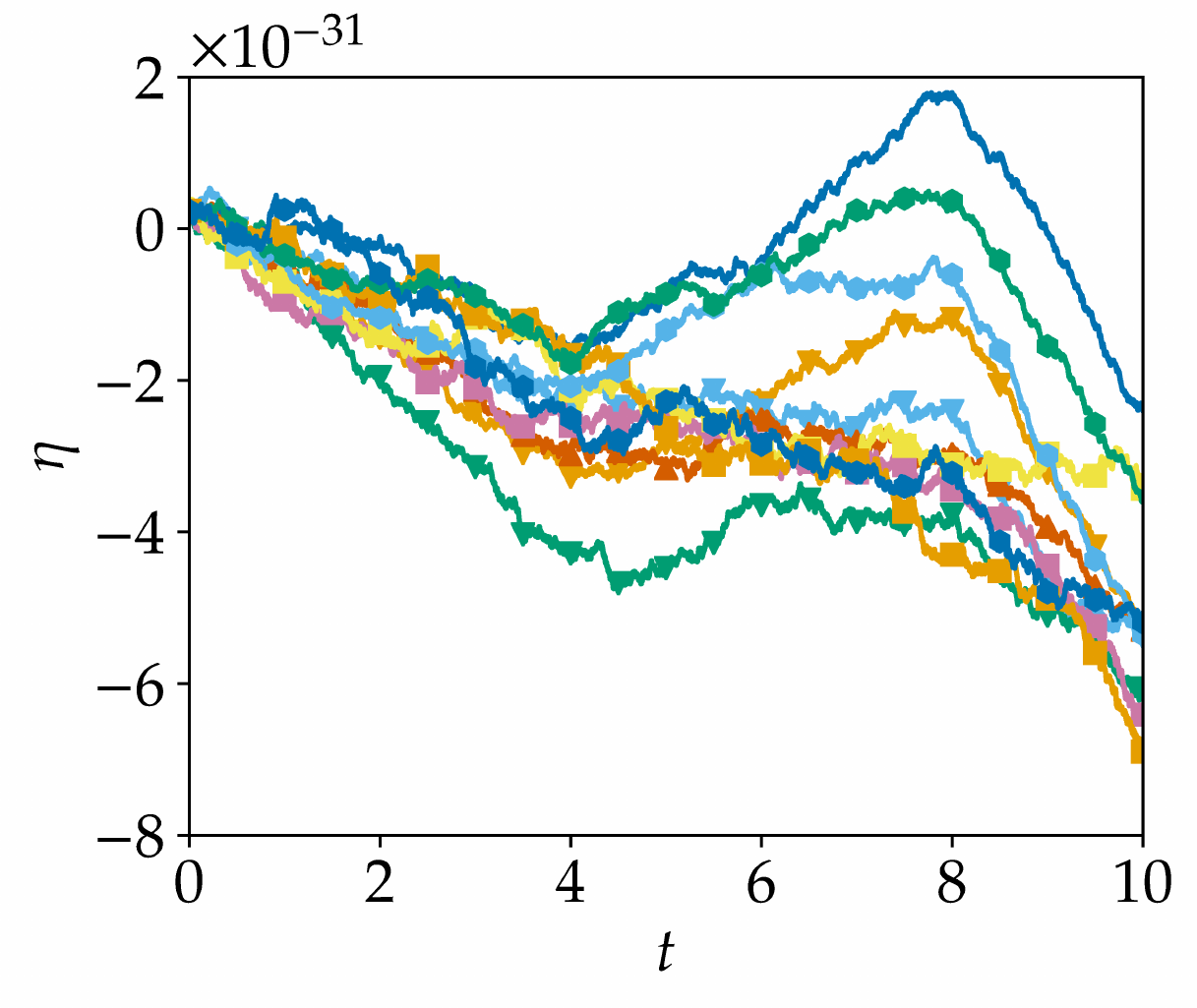}
   \caption{With relaxation.}
   \label{fig:iv_relaxation_non-conf_p_ref}
   \end{subfigure}
    \caption{Isentropic vortex: time evolution of total entropy, using
             the spatial discretization shown in Figure \ref{fig:iv_mesh_cut_p_ref}.}
    \label{fig:iv_non-conf_p_ref}
  \end{center}
\end{figure}

We show the entropy variation with and without relaxation in
Figure \ref{fig:iv_non-conf_p_ref}. The entropy is conserved up to machine (quadruple)
precision using relaxation, whereas, without relaxation, all solutions show
significant changes in total entropy.

\subsection{Three-Dimensional Propagation of a Viscous Shock}\label{subsubsec:vs}
Next we study the propagation of a viscous shock using the
compressible Navier--Stokes equations.  We assume a planar shock propagating along the $\xone$
coordinate direction with a Prandtl number of $Pr=3/4$.
The exact solution of this problem is known;
the momentum $\fnc{V}(x_1)$ satisfies the ODE
\begin{equation}
\begin{split}
&\alpha\fnc{V}\frac{\partial\fnc{V}}{\partial\xone}-(\fnc{V}-1)(\fnc{V}-\fnc{V}_{f})=0, \qquad -\infty\leq\xone\leq+\infty
\end{split}
\end{equation}
whose solution can be written implicitly as\footnote{The integration constant is taken
equal to zero because the center of the viscous shock is assumed to be at $\xone = 0$.}
\begin{equation}\label{eq:implicit_sol_vs}
\xone-\frac{1}{2}\alpha\left(\log\left|(\fnc{V}(x_1)-1)(\fnc{V}(x_1)-\fnc{V}_{f})\right|+\frac{1+\fnc{V}_{f}}{1-\fnc{V}_{f}}\log\left|\frac{\fnc{V}(x_1)-1}{\fnc{V}(x_1)-\fnc{V}_{f}}\right|\right) = 0,
\end{equation}
where
\begin{equation}
\fnc{V}_{f}\equiv\frac{\fnc{U}_{L}}{\fnc{U}_{R}},\qquad
\alpha\equiv\frac{2\gamma}{\gamma + 1}\frac{\,\mu}{Pr\dot{\fnc{M}}}.
\end{equation}
Here $\fnc{U}_{L/R}$ are known velocities to the left and right of the shock at
$-\infty$ and $+\infty$, respectively, $\dot{\fnc{M}}$ is the constant mass
flow across the shock, $Pr$ is the Prandtl number, and $\mu$ is the dynamic
viscosity.
The mass and total enthalpy are constant across the shock. Moreover,
the momentum and energy equations become redundant.

For our tests, $\fnc{V}$ is computed from Equation \eqref{eq:implicit_sol_vs}
to machine precision using bisection.
The moving shock solution is obtained by applying a uniform translation to the above solution.
The shock is located at the center of the domain at $t=0$ and the following
values are used: $M_{\infty}=2.5$, $Re_{\infty}=10$, and $\gamma=1.4$.
The domain is given by
\[
	\xone\in[-0.5,0.5], \qquad\xtwo\in[-0.5,0.5],\qquad\xthree\in[-0.5,0.5],\qquad t\in[0,0.5].
\]
The boundary conditions are prescribed by penalizing the numerical solution
against the exact solution. The analytical solution is also used to furnish data
for the initial conditions.

We run a convergence study for the complete entropy stable discretization
by simultaneously refining the grid spacing and the time step and keeping the
ratio $U_{\infty}\Delta t/\Delta x^2$ constant and equal to $0.05$.
The errors and convergence rates in the $L^1$, $L^2$ and $L^{\infty}$ norms for fourth-, fifth-,
sixth-order accurate algorithms are reported in Table~\ref{tab:conv_vs_p3to5_relax}.
As for the compressible Euler equations, we observe that the
order of convergence in both $L^1$ and $L^2$ norms is the expected one.

\begin{table}
\centering
  \caption{Convergence study for the viscous shock using entropy stable
  SBP-SAT schemes with different solution polynomial degrees $p$ and relaxation Runge--Kutta
  methods ($U_{\infty}\Delta t/\Delta x^2 = 0.05$, error in the density).}
  \label{tab:conv_vs_p3to5_relax}
  \begin{tabular*}{\linewidth}{@{\extracolsep{\fill}}*8c@{}}
    \toprule
    $p$ & RK Method
    & $L^1$ Error & $L^1$ Rate
    & $L^2$ Error & $L^2$ Rate
    & $L^\infty$ Error & $L^\infty$ Rate
    \\
    \midrule
    3 & RK(4,4) &
      2.59E-02  & ---   & 3.78E-02  & ---   & 1.11E-01     & ---  \\ &&
      1.88E-03  & 3.79  & 2.81E-03  & 3.75  & 9.77E-03     & 3.51 \\ &&
      1.03E-04  & 4.19  & 1.99E-04  & 3.82  & 9.89E-04     & 3.30 \\ &&
      5.90E-06  & 4.13  & 9.97E-06  & 4.32  & 6.12E-05     & 4.02 \\ &&
      3.30E-07  & 4.16  & 5.47E-07  & 4.19  & 3.92E-06     & 3.97 \\
    \midrule
    4 & BSRK(8,5) &
      6.80E-03  & ---   & 9.01E-03  & ---   & 2.00E-02     & ---   \\ &&
      5.74E-04  & 3.57  & 9.11E-04  & 3.31  & 4.02E-03     & 2.32  \\ &&
      2.78E-05  & 4.37  & 5.25E-05  & 4.12  & 3.32E-04     & 3.60  \\ &&
      6.30E-07  & 5.46  & 1.33E-06  & 5.30  & 1.06E-05     & 4.97  \\ &&
      1.70E-08  & 5.21  & 3.30E-08  & 5.33  & 3.59E-07     & 4.88  \\
    \midrule
    5 & VRK(9,6) &
      3.67E-03  & ---   & 6.17E-03  & ---   & 2.53E-02     & ---   \\ &&
      1.61E-04  & 4.51  & 2.57E-04  & 4.59  & 1.24E-03     & 4.35  \\ &&
      1.34E-06  & 6.90  & 2.93E-06  & 6.45  & 2.07E-05     & 5.91  \\ &&
      1.62E-08  & 6.37  & 3.90E-08  & 6.23  & 3.94E-07     & 5.71  \\
    \bottomrule
  \end{tabular*}
\end{table}


\subsection{Sod's Shock Tube}\label{subsec:sod}
Sod's shock tube problem is a classical Riemann problem that evaluates the behavior of a numerical method
when a shock, expansion, and contact discontinuity are present.
Of particular interest is smearing in the shock and contact, or
oscillations at any of the discontinuities. The governing equations are the
time-dependent one-dimensional compressible Euler equations which are solved
in the domain given by
\begin{equation*}
  x_1 \in [0,1], \quad t \in [0,0.2].
\end{equation*}

The problem is initialized with
\begin{equation*}
 \begin{gathered}
  \rho = \left\{\begin{array}{lr}
                 1 & x_1<0.5, \\
                 1/8 & x_1\geq 0.5,
                \end{array}\right. \quad
  p = \left\{\begin{array}{lr}
                 1     & x_1<0.5, \\
                 1 /10 & x_1\geq 0.5,
                \end{array}\right. \quad
  \Uone = 0,
 \end{gathered}
\end{equation*}
where $\rho$ and $p$ are the density and pressure, respectively.
All simulations used a ratio of specific heats equals to $7/5$.
\begin{figure}
\centering
  \begin{subfigure}[b]{0.49\textwidth}
    \centering
    \includegraphics[width=\textwidth]{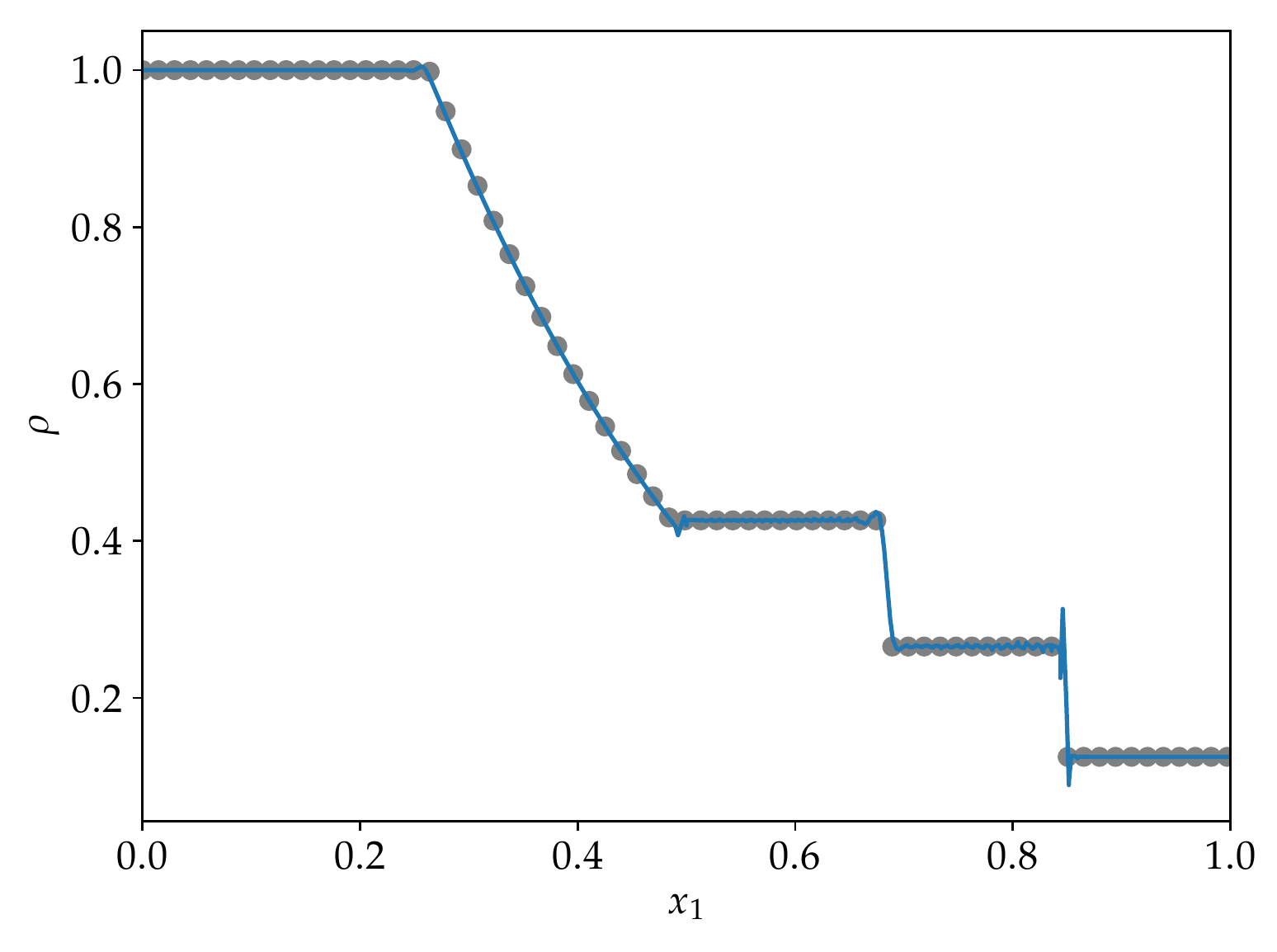}
    \caption{Without relaxation.}
    \label{fig:sod_without_relaxation}
  \end{subfigure}
  \begin{subfigure}[b]{0.49\textwidth}
    \centering
    \includegraphics[width=\textwidth]{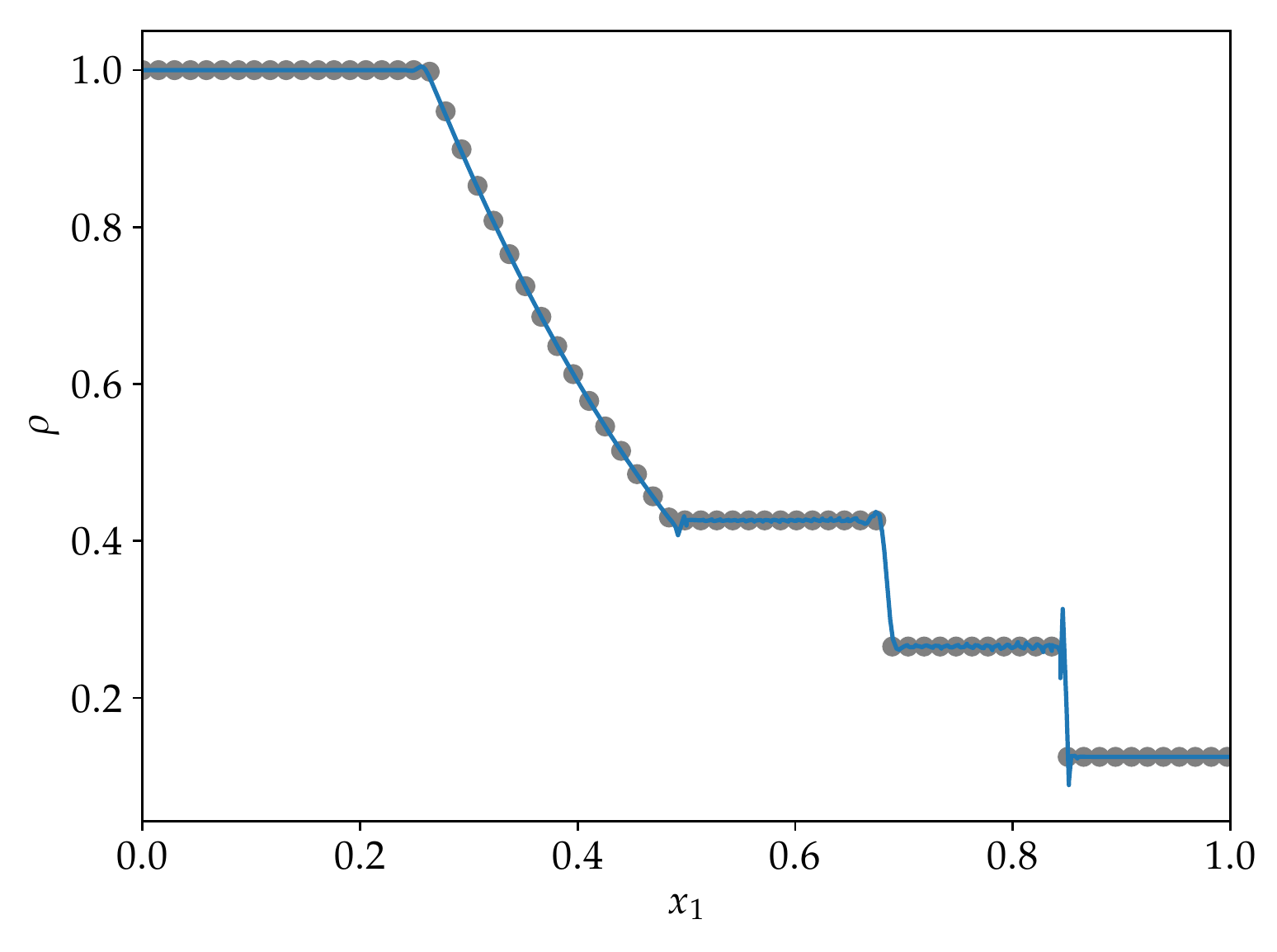}
    \caption{With relaxation.}
    \label{fig:sod_with_relaxation}
  \end{subfigure}%
  \caption{Density profile of Sod's shock tube problem (exact solution shown with circles).}
  \label{fig:sod}
\end{figure}

The entropy stable spatial discretization uses polynomials of degree $p=3$ and a grid
with $128$ elements. The problem is integrated in time using the classical
fourth-order accurate Runge--Kutta method RK(4,4).

Results of the density with and without relaxation are
visually indistinguishable, as shown in Figure~\ref{fig:sod}.
The relaxation approach does not increase the quality of the solution
and small overshoots near non-smooth parts of the numerical approximation
are visible. This behavior is expected for a spatial discretization which uses
high-order polynomials and no explicit shock capturing mechanism.

Nevertheless, the relaxation approach does also not decrease the quality
of the solution while guaranteeing the correct sign of the entropy evolution.
In particular, this guarantee does not result in excessive artificial
viscosity for shocks and the relaxation scheme does not smear the shock
solution for a high-order accurate SBP spatial scheme.

For this experiment, $\gamma$ deviates from unity by less than $5 \times 10^{-4}$,
as shown in Figure~\ref{fig:sod-gamma}. After a short initial period, the
value of $\gamma$ seems to oscillate following a regular pattern with
amplitude $\lesssim 10^{-4}$.

\begin{figure}
\centering
  \begin{subfigure}[b]{0.49\textwidth}
    \centering
    \includegraphics[width=\textwidth]{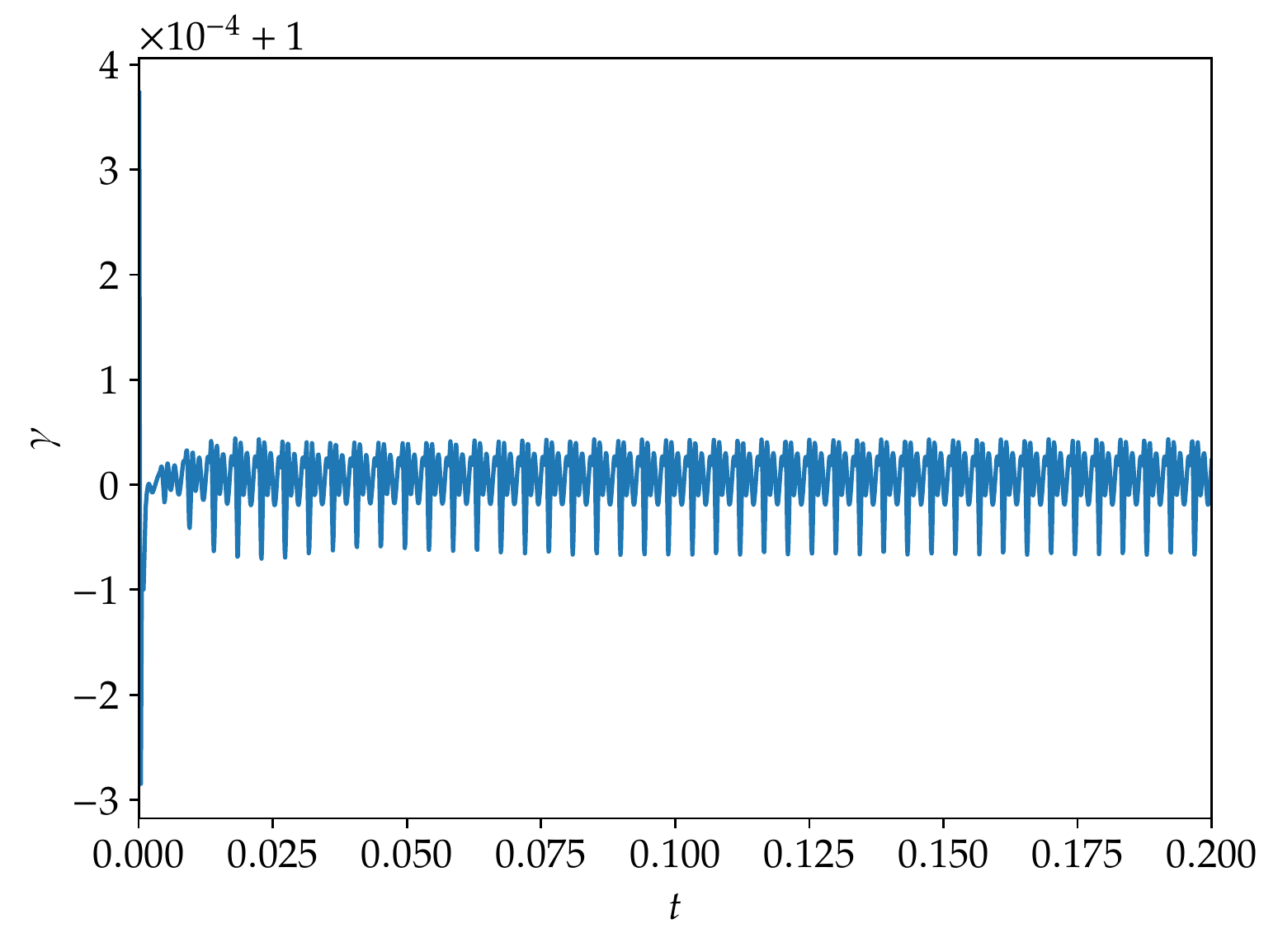}
    \caption{Sod's shock tube.}
    \label{fig:sod-gamma}
  \end{subfigure}
  \begin{subfigure}[b]{0.49\textwidth}
    \centering
    \includegraphics[width=\textwidth]{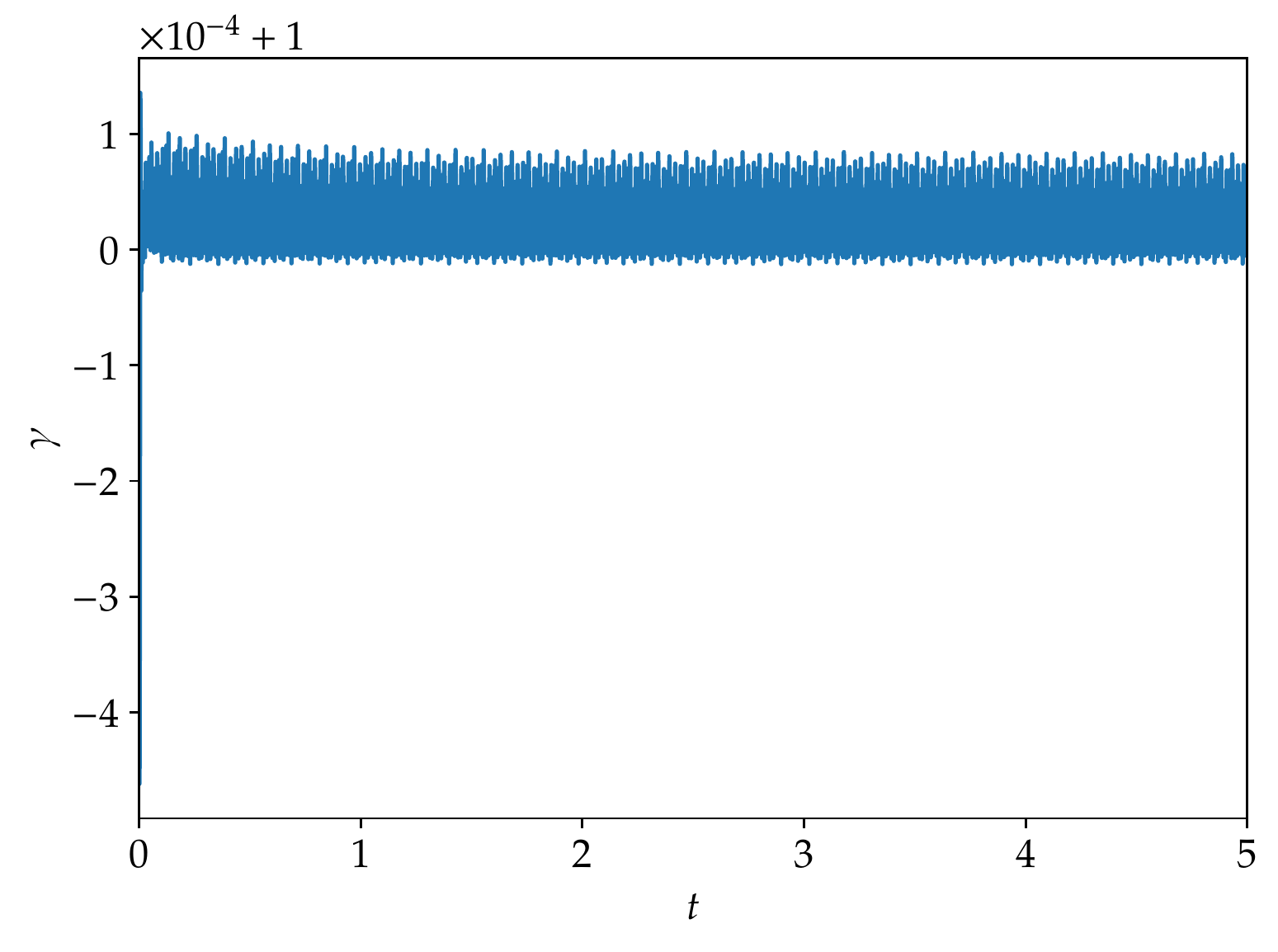}
    \caption{Sine-shock interaction.}
    \label{fig:sine-gamma}
  \end{subfigure}%
  \caption{Variation of the relaxation parameter, $\gamma$, for the shock problems.}
  \label{fig:sod-sine-gamma}
\end{figure}


\subsection{Sine-Shock Interaction}\label{subsec:sine}
The solution of this benchmark problem
contains both strong discontinuities and smooth structures and is well suited
for testing high-order shock-capturing schemes.
The governing equations are the
time-dependent one-dimensional compressible Euler equations which are solved
in the domain given by
\[
  x_1 \in [-5,+5], \quad t \in [0,5].
\]

The problem is initialized with \cite{titarev_2014}
\begin{equation*}
\left( \rho, \Uone, p \right) = \left\{
\begin{array}{lll}
\left(1.515695, \, 0523346, \, 1.805\right), & \quad \text{if} \quad -5   \le x < -4.5 \\
  \left(1 + 0.1 \sin (20\pi x), \, 0, \, 1 \right),       & \quad \text{if} \quad -4.5 \le x \le 5.
\end{array}
\right.
\end{equation*}
The exact solution to this problem is not available.
\begin{figure}
\centering
  \begin{subfigure}[b]{.49\textwidth}
    \centering
    \includegraphics[width=\textwidth]{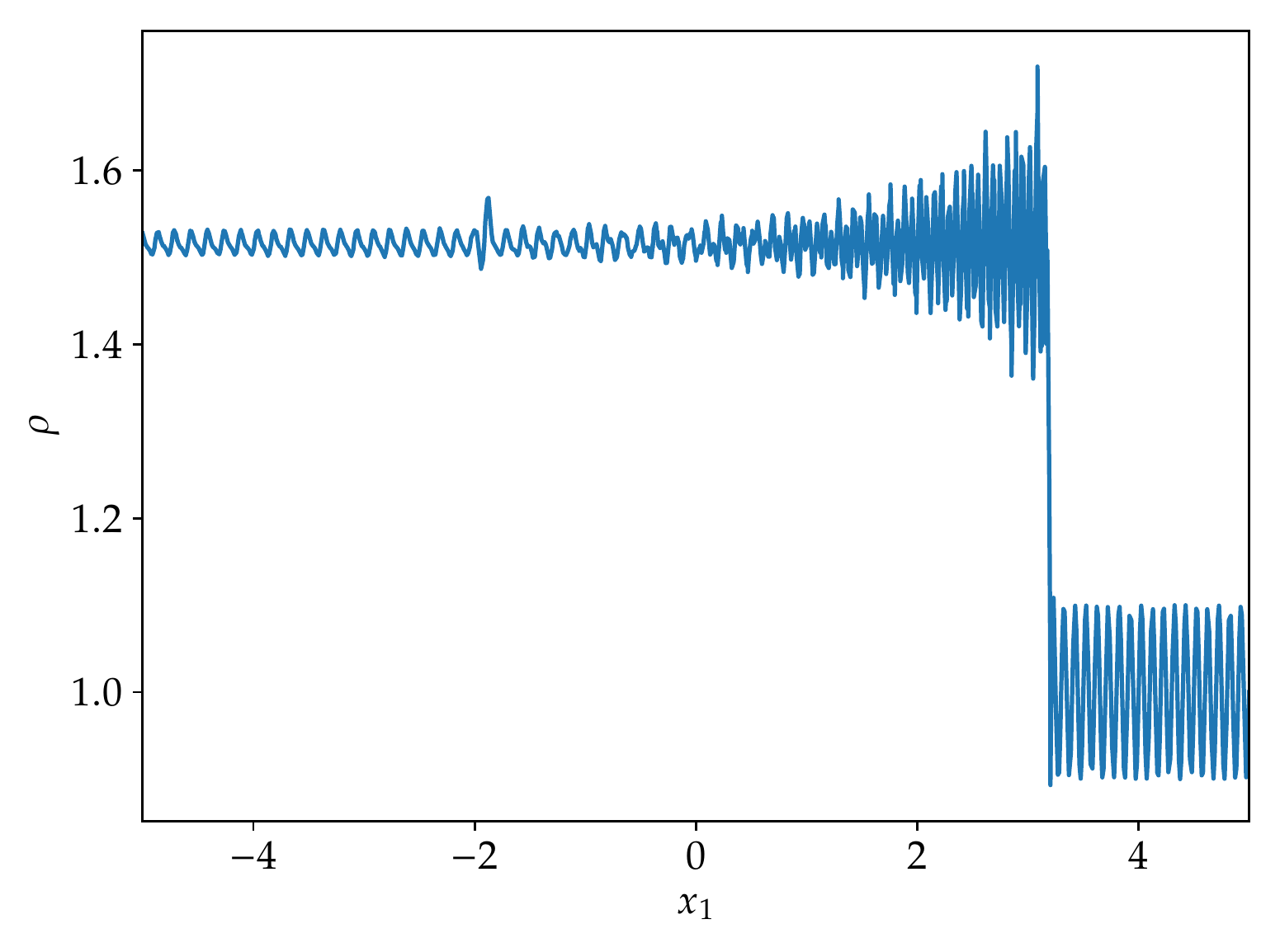}
    \caption{Without relaxation.}
    \label{fig:sine_without_relaxation}
  \end{subfigure}
  \begin{subfigure}[b]{.49\textwidth}
    \centering
    \includegraphics[width=\textwidth]{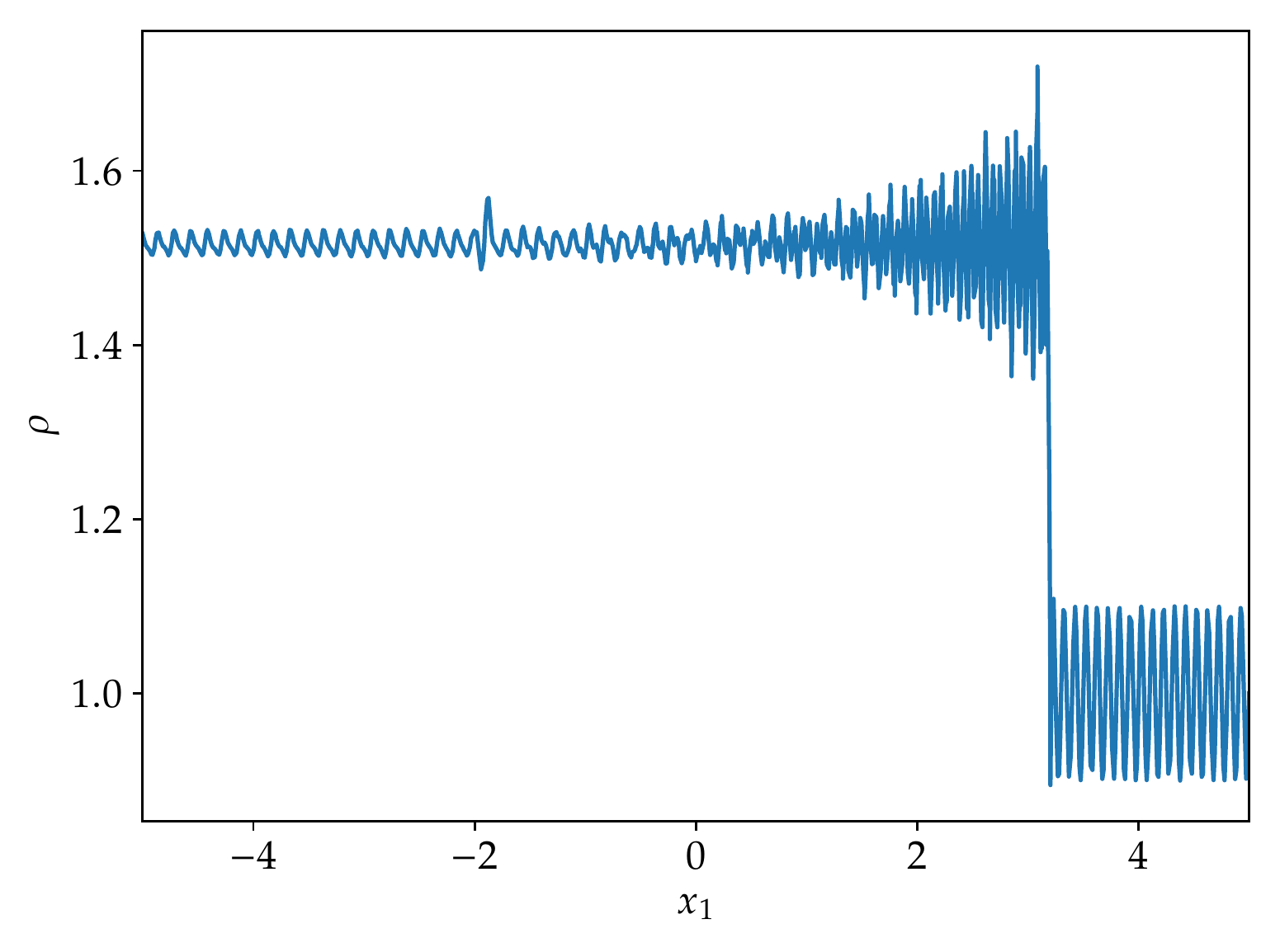}
    \caption{With relaxation.}
    \label{fig:sine_with_relaxation}
  \end{subfigure}%
  \caption{Density profile of the sine-shock interaction problem.}
  \label{fig:sine}
\end{figure}
The entropy stable semidiscretization uses polynomials of degree $p=3$ on a grid with
256 elements. The other parameters are the same as for Sod's shock
tube problem in Section~\ref{subsec:sod}.

Again, results with and without relaxation are  visually indistinguishable,
as shown for the density in Figure~\ref{fig:sine}, supporting the
conclusions of Section~\ref{subsec:sod}.
For this experiment, $\gamma$ deviates from one less than $5 \times 10^{-4}$,
as shown in Figure~\ref{fig:sine-gamma}.

\subsection{Lid-Driven Cavity Flow}\label{subsubsec:ldc}
Next, we validate the algorithm simulating a three-dimensional
lid-driven cavity flow. The domain is a cube with sides of length $l$ discretized using a
Cartesian grid composed of eight elements in each direction.
A velocity field is imposed on one of the walls,
corresponding to a rigid body rotation about the
center of the wall at an angular speed $\omega$ (see Figure \ref{fig:dc_bcs}).
We choose the rotation velocity and the size of the cavity such that
this example is characterized by
a Reynolds number $Re=l^2\omega/\nu=100$ and a Mach number $M=l \omega/c=0.05$.
All the dissipation terms used for the interface coupling \cite{parsani_entropy_stable_interfaces_2015}
and the imposition of the boundary conditions \cite{dalcin_2019_wall_bc} are
turned off, including upwind and interior-penalty SAT terms.

First, we show the performance of some relaxation Runge--Kutta schemes for the case
where entropy conservative adiabatic wall boundary conditions \cite{dalcin_2019_wall_bc}
are used on all the six faces of the cavity (see Figure
\ref{fig:dc_bc_noheat}).
Figure \ref{fig:dc_bc_noheat_eta} shows the time evolution of the discrete total entropy
$\eta=\mathbf{1}^{\top} \widehat{\Pmatvol} \, \bm{S}$.

Two highly resolved numerical solutions computed with an eighth-order accurate
scheme ($p=7$), the BSRK(8,5) and the VRK(9,6) time integration scheme
using a time adaptive algorithm with a tolerance of $10^{-8}$ are shown in
Figure \ref{fig:dc_bc_noheat_eta}. They are indistinguishable at the resolution of
the plot, and can be regarded as a reference solution. Because the solutions with and without
relaxation are very close to each other, only the results obtained with the
relaxation Runge--Kutta schemes are shown.
After a very short transient phase associated with the impulsive startup of the
rotating plate, $\eta$ decreases linearly. The reason is simple:
the imposed no-slip
wall boundary conditions on the six faces of the cavity are entropy conservative
and the only term in Equation \eqref{eq:estimate-no-slip-bc-2} which is non-zero
is $-\mathbf{DT}$. This contribution is strictly negative semi-definite
and constant because the flow at this Reynolds number is laminar and steady
and therefore, the gradient of the entropy variables in Equation \eqref{eq:DT}
does not change in time.

The results of three additional simulations with second-, third- and fourth-order
accurate solvers (again with $\Delta t=10^{-4}$) are also plotted in Figure \ref{fig:dc_bc_noheat_eta}.
For these methods, a fixed step size $\Delta t = 10^{-4}$ was used.
We find again that the entropy evolution with and without relaxation is indistinguishable.
This demonstrates that the relaxation approach gives a stability guarantee and,
unlike most numerical stabilization techniques, does not (in this case) add any
significant dissipation.
It can be clearly seen that the rate of entropy decay is different for different
entropy-conservative algorithms because of their accuracy. However, higher-order
discretizations give an entropy evolution that is closer to that of the the
reference solution.
\begin{figure}
\centering
  \begin{subfigure}[b]{.65\textwidth}
    \centering
    \includegraphics[width=\textwidth]{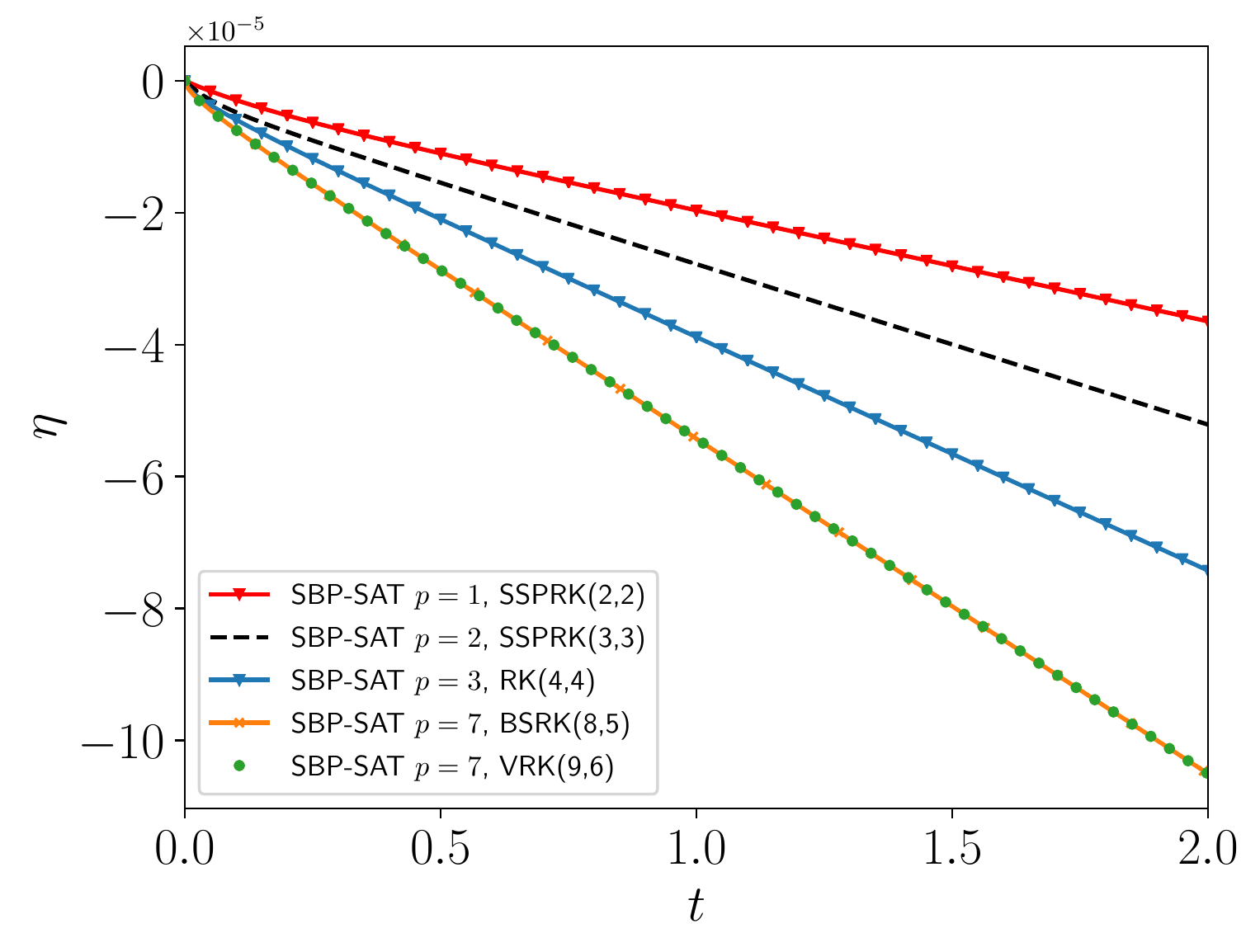}
    \caption{Time evolution of the entropy.}
    \label{fig:dc_bc_noheat_eta}
  \end{subfigure}
  \begin{subfigure}[b]{.3\textwidth}
    \centering
    \includegraphics[width=\textwidth]{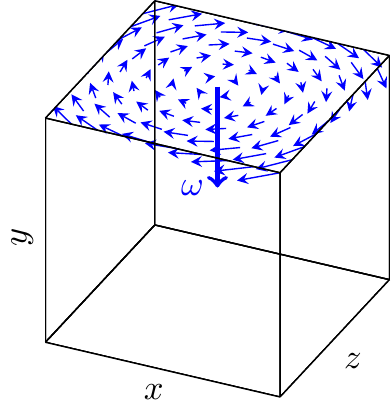}
    \caption{Sketch of the test case.}
    \label{fig:dc_bc_noheat}
  \end{subfigure}%
  \caption{Driven cavity with rigid body rotation $\omega$ and zero
  heat entropy flux.}
\end{figure}

Next we present in Figure \ref{fig:dc_bcs_eta} the results for the same set of
relaxation Runge--Kutta schemes when a non-zero heat entropy flux,
$\mathtt{g}(t)= - 10^{-4}\sin(4\pi t)$, is imposed on one of the faces adjacent
to the rotating face (see Figure \ref{fig:dc_bcs}).
Because of the added heat, the exact time evolution of the entropy is not monotonic.
This can be seen in the reference solutions provided again by using
an eighth-order accurate spatial scheme ($p=7$) with the BSRK(8,5) and the
VRK(9,6) time integration schemes using a time-adaptive algorithm with a tolerance of $10^{-8}$.
We observe that the accuracy of the entropy evolution in time depends as expected
on the order of the temporal and spatial discretizations.
Again, the entropy evolution with and without relaxation is indistinguishable,
indicating that the RRK methods do not add significant dissipation.

\begin{figure}
\centering
  \begin{subfigure}[b]{.65\textwidth}
    \centering
    \includegraphics[width=\textwidth]{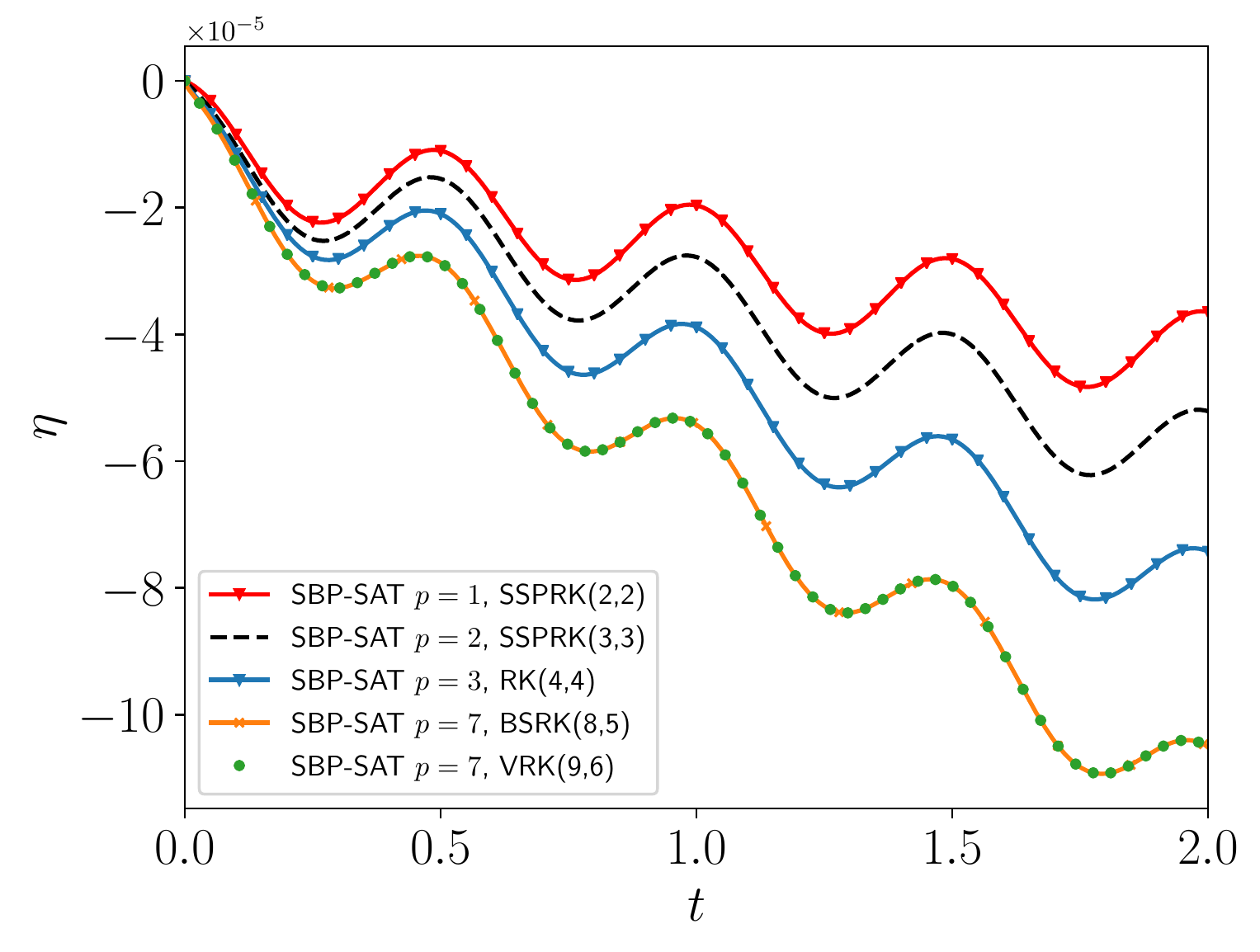}
    \caption{Time evolution of the entropy function.}
    \label{fig:dc_bcs_eta}
  \end{subfigure}
  \begin{subfigure}[b]{.3\textwidth}
    \centering
    \includegraphics[width=\textwidth]{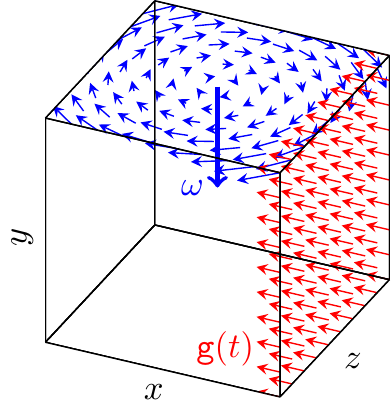}
    \caption{Sketch of the test case.}
    \label{fig:dc_bcs}
  \end{subfigure}%
  \caption{Driven cavity with rigid body rotation $\omega$ and non-zero
  heat entropy flux, $\mathtt{g}(t)$.}
\end{figure}


\subsection{Supersonic Turbulent Flow Past a Rod}\label{subsubsec:rod}
We finally provide further evidence of the robustness of the algorithm  in the
context of supersonic flow around a square cylinder with $Re_\infty=10^4$ and
$M_\infty=1.5$, which features shocks, expansion regions and three-dimensional
vortical structures \cite{parsani_entropy_stability_solid_wall_2015}.
The three-dimensional mesh used in the study consists of 87,872 hexahedral elements.
The boundary conditions imposed are adiabatic solid wall on the square cylinder surfaces
\cite{dalcin_2019_wall_bc}, periodic boundary
conditions in the $x_3$ direction, and far field at the remaining boundaries.
The problem is solved using a fourth-order accurate ($p=3$) spatial discretization and
RK(4,4) with relaxation.

\begin{figure}
  \centering
  \begin{tabular}{c c}
    \includegraphics[width = 0.45\textwidth]{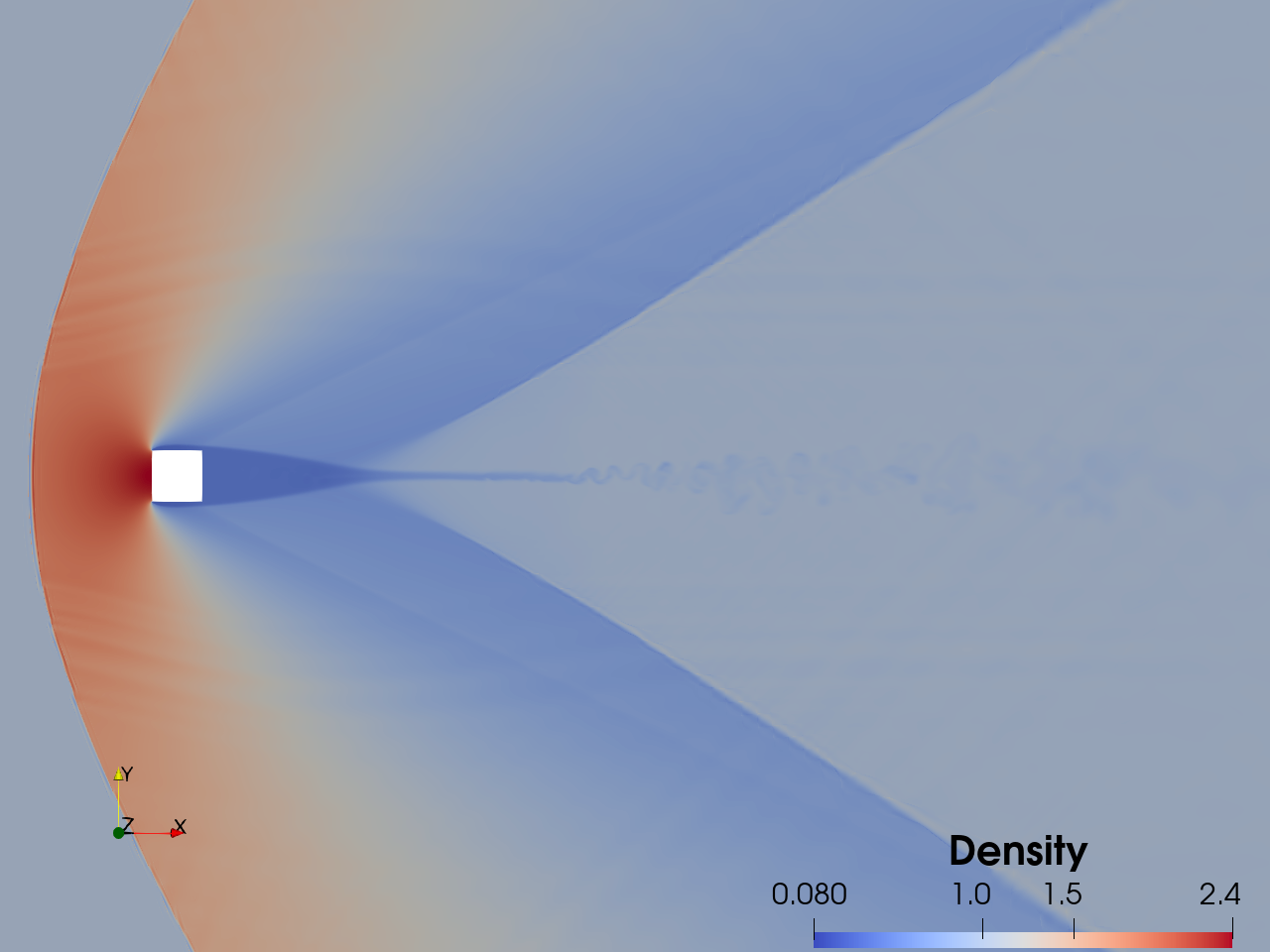} &
    \includegraphics[width = 0.45\textwidth]{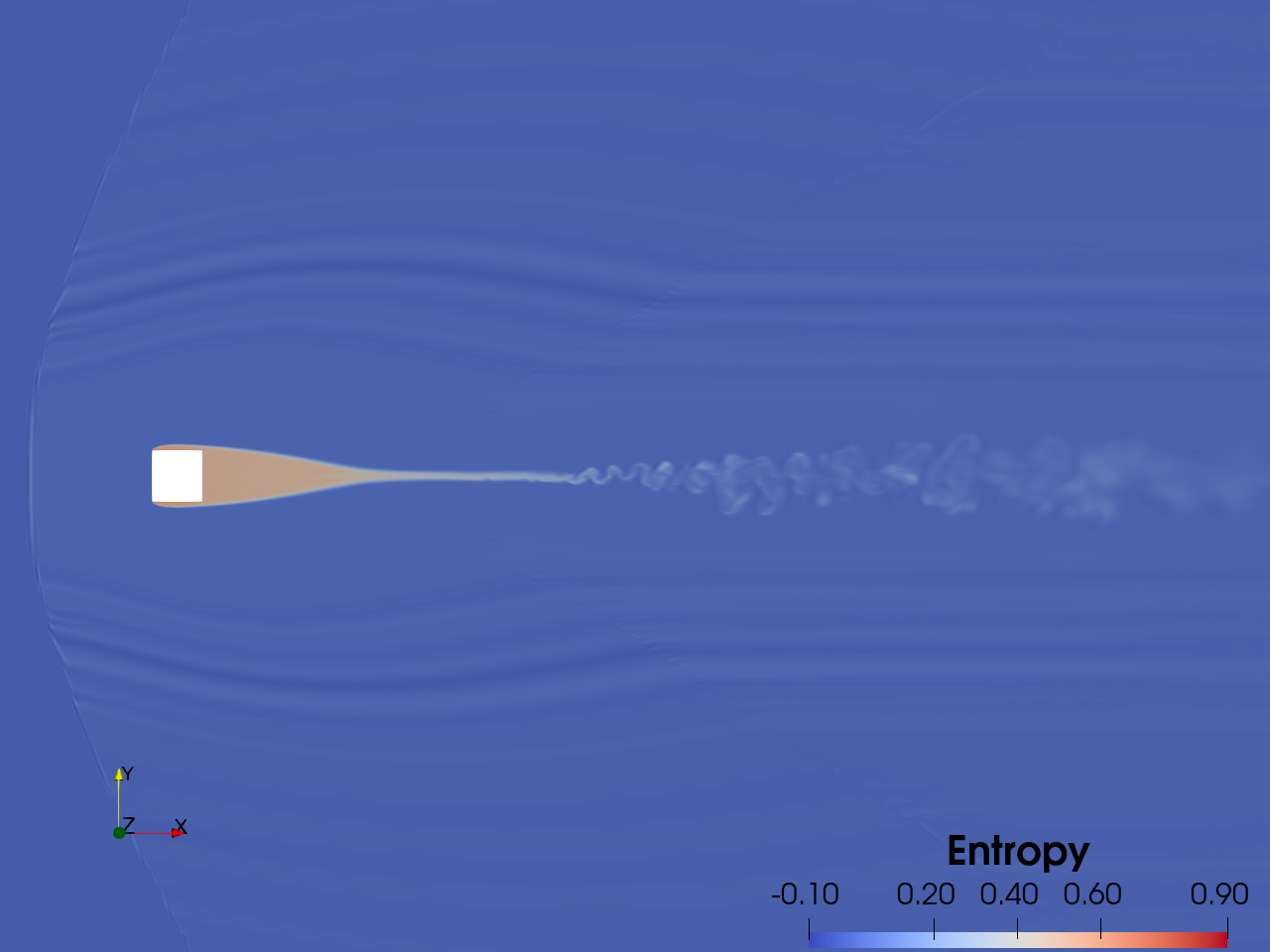}\\
    \includegraphics[width = 0.45\textwidth]{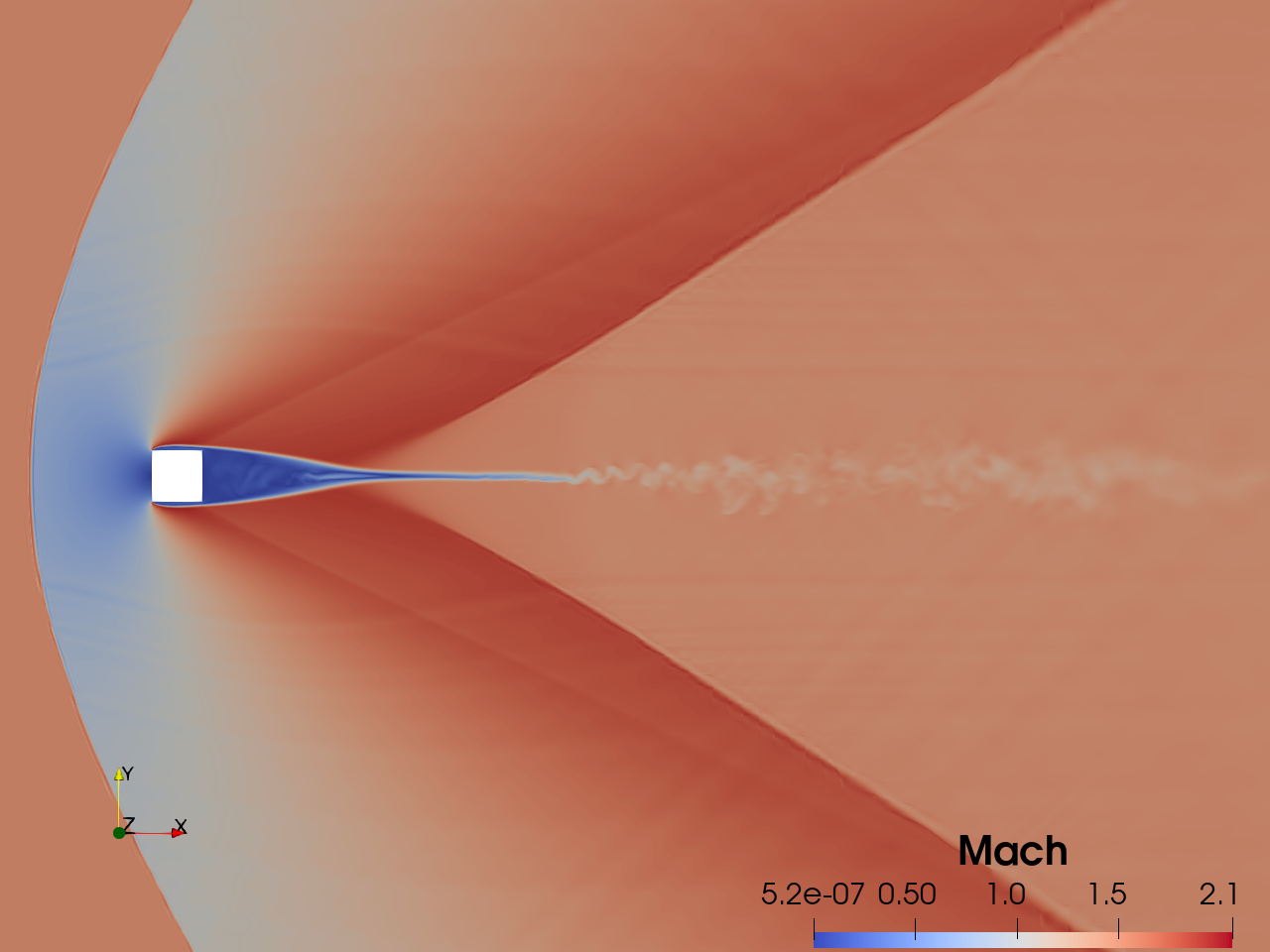} &
    \includegraphics[width = 0.45\textwidth]{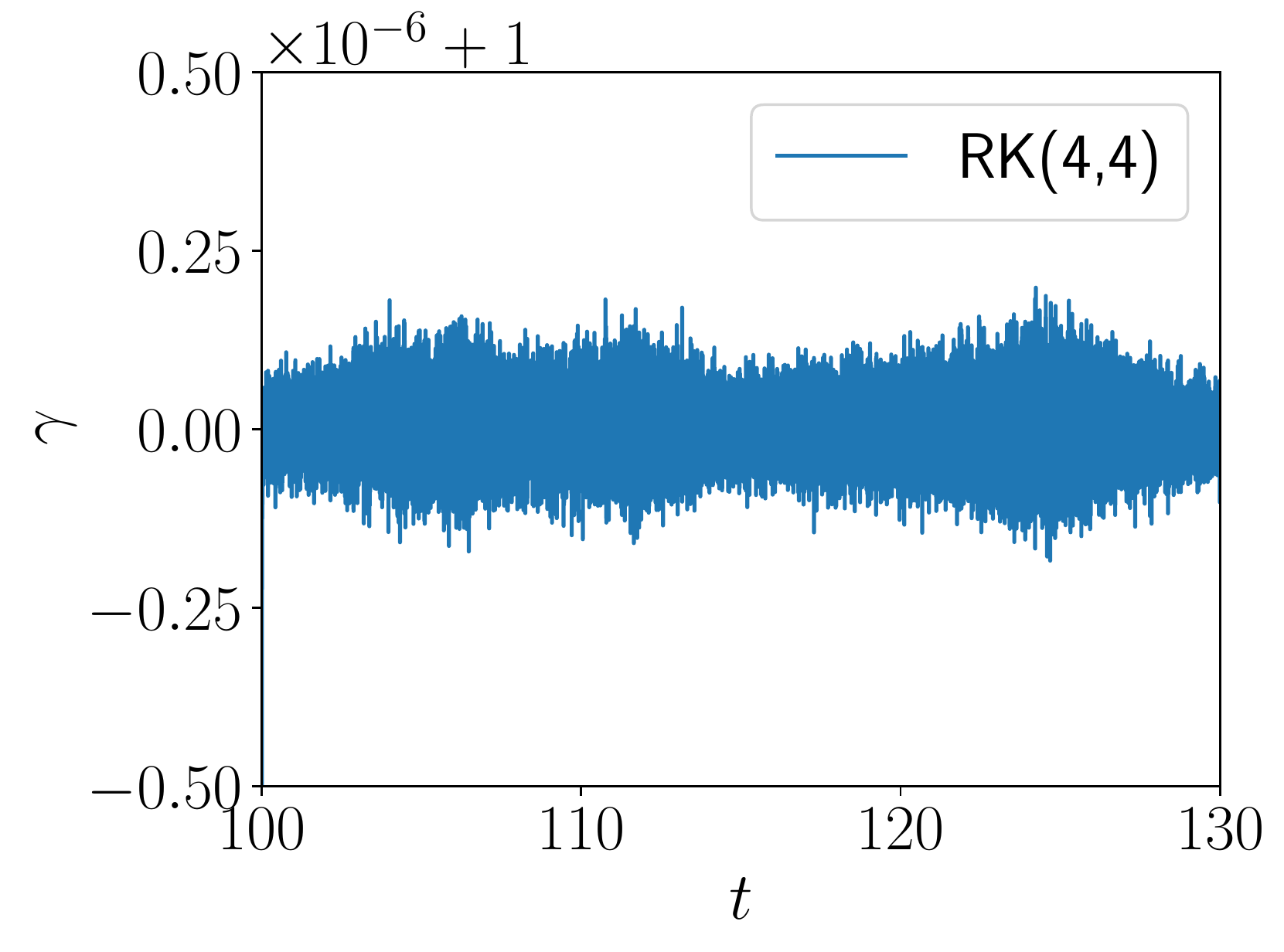}\\
  \end{tabular}
  \caption{Density, thermodynamic entropy, local Mach number contours and snapshot of the time evolution
  of the relaxation parameter, $\gamma$, for the supersonic flow around a square cylinder with $Re_\infty=10^4$ and $M_\infty=1.5$ at $t=130$.}
  \label{fig:ssresults}
\end{figure}
Figure \ref{fig:ssresults} shows the results for the
supersonic square cylinder at $t=130$ and the time evolution of the
relaxation factor, $\gamma$. At $t=130$, the flow is fully unsteady
and the shock in front of the cylinder has reached its final position. The flow is characterized by the shock in
front of the square cylinder and those in the near wake region. There is also an unsteady wake populated by three-dimensional vortices
shedding from the body.
The time evolution of the relaxation factor shows that the value of $\gamma$
oscillates around one with a maximum deviation from it of $2.5 \times 10^{-7}$.

We finally remark that the small oscillations near the shock
region are caused by discontinuities in the solution and are expected for this scheme.
In fact, we are not using any shock capturing method or reducing the order of
scheme at the discontinuity. Nevertheless, the simulation remains stable at all
time, and the oscillations are always confined to small regions near the
discontinuities. This is a feat unattainable with several alternative approaches
based on linear analysis which for this test problem lead to
numerical instabilities and an almost immediate crash of the solver
\cite{parsani_entropy_stability_solid_wall_2015}.

\section{Conclusions}
In this paper we have proposed, analyzed, and demonstrated a general approach which
allows any Runge--Kutta method to preserve the correct time evolution of an
arbitrary functional, without sacrificing linear covariance, accuracy, or
stability properties of the original method. In the case of convex functionals,
there are additional insights such as the possibility to add entropy dissipation
by the time integration scheme. This and procedures for adaptive time step
controller will be studied deeper in the future.  We are also studying the
impact of relaxation on the stable time step size.

The new approach, combined with an appropriate
entropy-conservative/entropy-stable semi-discretization on unstructured grids,
yields the \emph{first discretization for computational fluid dynamics} that is:
\begin{itemize}
    \item Primary conservative
    \item Entropy-conservative/entropy-stable in the fully-discrete sense
      with $\Delta t = {\mathcal O}(\Delta x)$
    \item Explicit, except for the solution of a scalar algebraic equation at each time step
    \item Arbitrarily high-order accurate in space and time
\end{itemize}
Furthermore, the added computational cost of this modification is insignificant
in the context of typical computational fluid dynamics calculations. It is
anticipated that this type of entropy stable formulation will begin to
bear fruit for industrial simulations in the near future \cite{cfd_2030_nasa}.
Finally, relaxation schemes provide an entropy guarantee without
degrading solution accuracy or adding unnecessary dissipation.

Further desirable properties of fully discrete numerical methods for the compressible Euler
and Navier--Stokes equations not studied in this article concern additional elements of robustness, e.g.
preserving the positivity of two thermodynamic variables (e.g., density and pressure). To use the framework of
\cite{zhang2011maximum}, the interplay of limiters and relaxation schemes has
to be studied.

Moreover, having a local entropy (in-)equality instead of the global one established
in this article might be advantageous. However, this seems to be currently out of
reach using the relaxation schemes proposed here. While fully-discrete local entropy
inequalities can be achieved by the addition of sufficient artificial viscosity,
the advantage of relaxation schemes is that they do not impose excessive dissipation;
if the baseline scheme is dissipative, they can even remove some of this dissipation.

\section*{Acknowledgments}
The research reported in this paper was funded by King Abdullah University of
Science and Technology. We are thankful for the computing resources of the
Supercomputing Laboratory and the Extreme Computing Research Center at
King Abdullah University of Science and Technology.

\printbibliography

\end{document}